\newcommand\numberthis{\addtocounter{equation}{1}\tag{\theequation}}
\newtheorem{theorem}{Theorem}[section]
\newtheorem{proposition}[theorem]{Proposition}
\newtheorem{corollary}[theorem]{Corollary}
\newtheorem{lemma}[theorem]{Lemma}
\newtheorem{conjecture}{Conjecture}
\theoremstyle{remark}
\newtheorem*{remark}{Remark}
\theoremstyle{definition}
\newtheorem{definition}{Definition}[section]
\newtheorem{example}{Example}[section]
\newtheorem*{Acknowledgment}{Acknowledgment}
\newcommand{\RN}[1]{%
  \textup{\uppercase\expandafter{\romannumeral#1}}%
}
\newcommand{\colim}{\text{colim}}
\newcommand{\Z}{\mathbb{Z}}
\newcommand{\R}{\mathbb{R}}
\newcommand{\ho}{\mathrm{ho}}
\newcommand{\Loop}{\mathcal{L}}
\newcommand{\Fix}{\text{Fix}}
\newcommand{\fr}{\mathrm{fr}}
\begin{document}
\title[The Klein and Williams Conjecture for the Fixed Point Problem]{On the Klein and Williams Conjecture for the Equivariant Fixed Point Problem}
\author{Ba\c{s}ak K\"u\c{c}\"uk}
\address{Mathematisches Institut, Georg-August-Universit\"at G\"ottingen, Bunsenstraße 3-5, 37073 Göttingen, Germany.}
\email{basak.kucuk@mathematik.uni-goettingen.de}
\maketitle

\begin{abstract}
Klein and Williams developed an obstruction theory for the homotopical equivariant fixed point problem, which asks whether an equivariant map can be deformed, through an equivariant homotopy, into another map with no fixed points \cite[Theorem H]{KW2}. An alternative approach to this problem was given by Fadell and Wong \cite{FW88} using a collection of Nielsen numbers. It remained an open question, stated as a conjecture in \cite{KW2}, whether these Nielsen numbers could be computed from the Klein-Williams invariant. We resolve this conjecture by providing an explicit decomposition of the Klein-Williams invariant under the tom Dieck splitting. Furthermore, we apply these results to the periodic point problem.  
\end{abstract}


\section{Introduction}

The \textit{homotopical fixed point problem} asks whether a given self-map on a topological space can be deformed into a fixed-point-free map. The classical obstruction to this problem for compact ENR spaces is the \textit{Lefschetz number}, as stated in the famous Lefschetz fixed point theorem: if a self-map $f$ on a compact ENR $X$ has no fixed points, then its Lefschetz number $L(f)$ must be zero. Since the Lefschetz number is defined using homology, it remains invariant under homotopy. 

When $X$ is a simply-connected space of dimension at least three, the Lefschetz number is a complete invariant: $f$ is homotopic to a fixed-point-free map if and only if $L(f) = 0$. However, if $X$ is not simply connected, the converse of the Lefschetz theorem does not necessarily hold—see \cite{brown.converse.fixpt} for details.

A more refined invariant, called the \textit{Nielsen number} $N(f)$ (introduced by Nielsen \cite{Nielsen1921}), provides a stronger result: for a self-map $f$ on a compact ENR $X$ with $\dim X \geq 3$, $f$ can be deformed into a fixed-point-free map if and only if $N(f) = 0$. More precisely, the Nielsen number gives a lower bound on the number of fixed points of $f$, up to homotopy (Wecken \cite{Wecken1942}). The dimension condition is necessary, as Jiang \cite{jiang2dim} showed that for any two-dimensional connected manifold with negative Euler characteristic, there exists an onto self-map that is not homotopic to a fixed-point-free map, despite having $N(f) = 0$.

Classically, the Nielsen number is defined geometrically by counting essential fixed-point classes. However, it can also be computed algebraically from the \textit{Reidemeister trace} \cite{Reidemeister1936AutomorphismenVH, Wecken1941}, which is also a complete invariant for the homotopical fixed-point problem.

This paper focuses on an equivariant generalization of the problem. Several equivariant versions of the Lefschetz fixed point theorem and generalized equivariant Lefschetz invariants have been developed using different approaches. One such approach, by \cite{FW88}, extends the classical case to the equivariant setting. Their results show that for a compact Lie group $G$, a collection of Nielsen numbers $N(f^H)$ of the induced maps on fixed-point sets $M^H$, for $H \leq G$, provides a complete invariant for determining whether a $G$-map $f$ can be deformed equivariantly into a fixed-point-free map.

\begin{theorem}{\cite[Fadell, Wong]{FW88}}\label{FWresult}
Let $G$ be a compact Lie group and $M$ a compact, smooth $G$-manifold. Let $(H_1), \ldots, (H_k)$ be an admissible ordering of the isotropy types of $M$, and $M_i=\{ x \in M:(G_x)=(H_j), j \leq i\}$ be the associated filtration. Suppose that for each $i, 1\leq i \leq k$, the Weyl group $WH_i=N H_i / H_i$ is finite, $\dim M^{H_i} \geq 3$, and $\dim (M_{i-1}\cap M^{H_i}) \leq \dim M^{H_i}-2$. Then, a $G$-self-map $f$ on $M$ is $G$-homotopic to a fixed point free $G$-map if and only if the Nielsen number $N(f^{H_i})=0$ for each $i$.
\end{theorem}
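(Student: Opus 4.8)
The plan is to prove the two implications separately, handling the converse by induction along the filtration $\emptyset = M_0 \subset M_1 \subset \cdots \subset M_k = M$. The forward implication is immediate: if $f$ is $G$-homotopic to a fixed-point-free $G$-map $g$, then for each $i$ the restriction $g^{H_i}\colon M^{H_i}\to M^{H_i}$ has no fixed points, since a fixed point of $g^{H_i}$ would be a fixed point of $g$; meanwhile $f^{H_i}$ and $g^{H_i}$ are (non-equivariantly) homotopic via the restriction of the $G$-homotopy, and a fixed-point-free self-map has no fixed point classes at all, so $N(f^{H_i}) = N(g^{H_i}) = 0$. For the converse, assume $N(f^{H_i}) = 0$ for all $i$; after a preliminary $G$-homotopy we may take $f$ smooth and in general position, so $\Fix(f)$ is finite. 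We show by induction on $i$ that $f$ can be $G$-homotoped to have no fixed points on the closed $G$-subspace $M_i$, the case $i = 0$ being vacuous.

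Assume the statement for $i-1$. Because the ordering is admissible and isotropy can only increase under $f$, the map $f$ preserves the filtration, so it restricts to a $WH_i$-self-map $\varphi := f^{H_i}$ of the compact $WH_i$-manifold $M^{H_i}$, with $WH_i$ finite, $\dim M^{H_i}\ge 3$, and $\varphi$ already fixed-point-free on the closed subset $A := M_{i-1}\cap M^{H_i}$, where $\dim A \le \dim M^{H_i} - 2$. The remaining fixed points of $f$ on $M_i$ lie entirely in the stratum $M_i\setminus M_{i-1}$ of orbit type $(H_i)$, hence in $M^{H_i}\setminus A$; and on this region $WH_i$ acts \emph{freely}, since points there have $G$-isotropy exactly conjugate to $H_i$. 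Thus the problem of clearing $\Fix(f)\cap M_i$ equivariantly is reduced to a Wecken-type problem for $\varphi$ on $M^{H_i}$, rel a neighborhood of $A$, with an additional free $WH_i$-symmetry.

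The heart of the argument is therefore a simultaneously equivariant and relative Wecken theorem. Using $N(\varphi) = 0$, $\dim M^{H_i}\ge 3$, and the codimension bound $\dim A \le \dim M^{H_i} - 2$, one deforms $\varphi$ through a $WH_i$-homotopy supported in a $WH_i$-invariant neighborhood of $M^{H_i}\setminus A$ disjoint from a neighborhood of $A$, ending at a map with no fixed points on $M^{H_i}$: since $N(\varphi)=0$ all fixed point classes of $\varphi$ are inessential, the classical coalescing-and-removal procedure applies, the codimension-$\geq 2$ hypothesis lets the Wecken paths be chosen to avoid $A$ (making the relative constraint vacuous), the freeness of the $WH_i$-action lets the moves be performed $WH_i$-equivariantly by working with one representative per orbit and translating, and the dimension bound $\dim M^{H_i}\ge 3$ supplies the room for the general-position and Whitney-type arguments. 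One then extends this $WH_i$-homotopy to an $NH_i$-homotopy on $GM^{H_i}$ and, via the equivariant homotopy extension property applied to a $G$-invariant tubular neighborhood of $G(M^{H_i}\setminus A)$ chosen to avoid $M_{i-1}$ and the remaining strata of $M_i$, to a $G$-homotopy of $f$ on all of $M$. This homotopy leaves $f$ untouched near $M_{i-1}$, so $f$ stays fixed-point-free there and is now fixed-point-free on all of $M_i$; fixed points possibly created outside $M_i$ are cleared at later stages. Taking $i = k$ yields the theorem.

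The main obstacle I anticipate is making the equivariant and relative refinements of the Wecken argument coexist, i.e.\ verifying that vanishing of the \emph{ordinary} Nielsen number $N(f^{H_i})$ — rather than some a priori larger equivariant or relative Nielsen invariant — is exactly what the inductive step consumes. This hinges on the observation that, after clearing $M_{i-1}$, every surviving fixed point class of $f^{H_i}$ lies in the free locus $M^{H_i}\setminus M_{i-1}$ and is inessential, so the codimension-$\geq 2$ gap renders the relative constraint inert while freeness renders the equivariant constraint automatic. A secondary technical point is bookkeeping along the filtration: one must organize the extensions of homotopies, using the admissible ordering and carefully chosen $G$-invariant tubular neighborhoods, so that clearing stratum $i$ never reintroduces fixed points on the already-cleared $M_{i-1}$.
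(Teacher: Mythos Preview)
The paper does not contain a proof of this statement: Theorem~\ref{FWresult} is quoted verbatim from Fadell--Wong \cite{FW88} as background for the Klein--Williams conjecture, and no argument for it is given anywhere in the text. There is therefore nothing in the paper to compare your proposal against.

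That said, your outline is a faithful sketch of the Fadell--Wong strategy: the forward direction is trivial, and the converse proceeds by induction up the isotropy filtration, at each stage reducing to a relative Wecken problem on $M^{H_i}$ with a free $WH_i$-action on the complement of the already-cleared lower strata, invoking $\dim M^{H_i}\ge 3$ for the Wecken moves and the codimension-$2$ gap to keep paths away from $M_{i-1}\cap M^{H_i}$. Your identification of the two delicate points --- that the \emph{ordinary} Nielsen number $N(f^{H_i})$ suffices (because freeness of the $WH_i$-action on the relevant stratum makes the equivariant constraint automatic, and the gap condition makes the relative constraint inert) and that the homotopy extensions must be organized so as not to reintroduce fixed points on $M_{i-1}$ --- is exactly right, and these are precisely the points where the work in \cite{FW88} lies. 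If you intend this as a self-contained proof rather than a summary, those two points would need to be fleshed out; as a proposal it is sound.
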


Later, Wong \cite{WongNielsenNumber} defined the equivariant Nielsen number for compact Lie groups, which provides the minimal number of fixed orbits and fixed points of an equivariant map.  

A more general and modern version of this invariant for infinite discrete groups was introduced by Weber \cite{weber07}. Weber’s definition is structurally more refined, as it allows one to extract equivariant Nielsen invariants directly from the generalized equivariant Lefschetz invariant, which she previously developed in \cite{Weber06}.  

Another significant approach was introduced by Klein and Williams \cite{KW2}, who constructed a homotopical invariant for closed, smooth $G$-manifolds, where $G$ is a finite group. 

\begin{theorem}{\cite{KW2}}\label{KWfix}Let $f: M \rightarrow M$ be a $G$-map on a closed, smooth $G$-manifold $M$. Then, there exists an invariant

$$
\ell_G(f) \in \Omega_0^{G, \fr}(\Loop_f M),
$$

which vanishes if $f$ is $G$-equivariantly homotopic to a fixed-point-free map. Here,

$$
\Loop_f M = \{ \lambda \colon [0,1] \to M \mid f(\lambda(0)) = \lambda(1) \}
$$

denotes the space of paths twisted by $f$, and $\Omega_0^{G, \fr}(\Loop_f M)$ is the $G$-equivariant framed bordism group of $\Loop_f M$. Conversely, assume that $\ell_G(f) = 0$. Suppose the following conditions hold:

\begin{itemize}
\item $\dim M^H \geq 3$ for all conjugacy classes of subgroups $(H)$ such that the subgroup $H \subset G$ appears as an isotropy group in $M$, and
\item $\dim M^H \leq \dim M^K - 2$ for all conjugacy classes $(H), (K)$ with proper subgroup inclusions $K \subset H$, where $H$ and $K$ are isotropy subgroups of $M$.
\end{itemize}

Then $f$ is $G$-equivariantly homotopic to a fixed-point-free map.
\end{theorem}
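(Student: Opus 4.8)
The plan is to prove the two implications by quite different means, and for the converse to reduce to the Fadell--Wong theorem (Theorem~\ref{FWresult}) by splitting $\Omega_0^{G,\fr}(\Loop_f M)$ along isotropy. For necessity one must first pin down what $\ell_G(f)$ is: embed $M$ $G$-equivariantly in a representation $V$, so that $M$ is equivariantly Atiyah dual to the Thom spectrum $M^{-TM}$; realize $\Fix(f)$ as the homotopy equalizer of $\mathrm{id}_M$ and $f$, i.e.\ the pullback of the diagonal $\Delta\colon M\to M\times M$ along the graph $\Gamma_f=(\mathrm{id},f)$; and dualize this intersection datum using (twisted) equivariant Poincar\'e--Lefschetz duality to get a stable $G$-map whose degree is an element $\ell_G(f)\in\pi_0^G\,\Sigma^\infty_G(\Loop_f M_+)=\Omega_0^{G,\fr}(\Loop_f M)$. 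This class is a $G$-homotopy invariant of $f$ by construction, and if $f$ itself has no fixed points the equalizer is $G$-equivalent to the empty space, so the defining map is $G$-nullhomotopic and $\ell_G(f)=0$; hence the same holds for every $G$-map $G$-homotopic to a fixed-point-free one. The geometric description I will use in the converse is the Pontryagin--Thom one: after putting $f$ in $G$-general position, $\ell_G(f)$ is the framed $G$-bordism class in $\Loop_f M$ of the indexed fixed orbit set of $f$.

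For sufficiency, assume $\ell_G(f)=0$, fix an admissible ordering $(H_1),\dots,(H_k)$ of the isotropy types of $M$ with associated filtration $M_1\subset\cdots\subset M_k=M$, and put $f$ in $G$-general position, so that $\Fix(f)$ is a finite $G$-set with each orbit carrying a local fixed-point index and a path component of $\Loop_f M$. Now apply the tom Dieck splitting of $\Omega_0^{G,\fr}(\Loop_f M)$, whose summands are indexed by conjugacy classes $(H)$ of subgroups of $G$: the $(H)$-summand sees only the fixed points of isotropy type $(H)$ together with the residual $WH$-action and the path data, and one identifies the invariant it records with a Reidemeister-trace / Nielsen-number datum for $f^{H}\colon M^{H}\to M^{H}$ whose vanishing is precisely the Fadell--Wong condition $N(f^{H})=0$. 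Therefore $\ell_G(f)=0$ if and only if $N(f^{H_i})=0$ for all $i$; this equivalence, made explicit, is the decomposition announced in the abstract.

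It then remains to check that the hypotheses of Theorem~\ref{FWresult} are satisfied under those of Theorem~\ref{KWfix}. Since $G$ is finite, each Weyl group $WH_i=NH_i/H_i$ is finite; the condition $\dim M^{H_i}\ge 3$ is assumed outright. Finally, any point of $M_{i-1}\cap M^{H_i}$ has isotropy group $H$ that is an isotropy subgroup of $M$ and that properly contains a conjugate of $H_i$, so $M_{i-1}\cap M^{H_i}$ is a finite union of subsets, each contained in some $M^{H}$ with $H_i\subsetneq H$; applying the hypothesis $\dim M^{H}\le\dim M^{K}-2$ with $K=H_i$ gives $\dim(M_{i-1}\cap M^{H_i})\le\dim M^{H_i}-2$. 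All hypotheses of Theorem~\ref{FWresult} hold, so from $N(f^{H_i})=0$ for every $i$ it follows that $f$ is $G$-homotopic to a fixed-point-free map, which completes the converse.

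The main obstacle is the identification, in the sufficiency step, of the tom Dieck summands of $\ell_G(f)$ with the Fadell--Wong/Nielsen data: one has to track the twisting by $f$, the stable framings, and the $WH$-actions on the geometric fixed-point sets carefully enough that a path component of $\Loop_f M$ corresponds to an equivariant Nielsen class and the $(H)$-summand to the Reidemeister trace of $f^{H}$. Two subsidiary issues also need care: arranging the equivariant Poincar\'e--Lefschetz duality of the necessity step with local coefficients, since the fixed submanifolds $M^{H}$ need not be orientable; and confirming --- as sketched above --- that the dimension hypotheses of Theorem~\ref{KWfix} genuinely subsume all of those of Theorem~\ref{FWresult}.
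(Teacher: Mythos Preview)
This theorem is not proved in the paper: it is quoted from \cite{KW2}. Section~\ref{sec3} explains the \emph{construction} of $\ell_G(f)$ --- via the graph lifting problem~\eqref{graphmap}, the section class $[s]\in H^0_G(M;\Sigma_M^\infty S_M\Gamma_f^*E)$, and the isomorphism of Theorem~\ref{iso} --- but both implications are delegated to the cited parametrized obstruction result \cite[3.1]{KW2} (restated in Section~\ref{sec3}), which decides the section-existence problem directly via connectivity estimates on the fibers $E^H$. There is no reduction to Fadell--Wong anywhere in that argument.

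Your converse takes a genuinely different route: split $\ell_G(f)$ under tom Dieck, identify the $(H)$-summand with a quotient of the Reidemeister trace of $f^H$, deduce $N(f^H)=0$ for all $H$, and then invoke Theorem~\ref{FWresult}. This is logically sound and not circular --- nothing in Section~\ref{sec4} uses the converse of Theorem~\ref{KWfix} --- but the step you flag as ``the main obstacle'' is precisely the whole of Section~\ref{sec4} (Theorems~\ref{splittingresult}, \ref{ident3}, \ref{quotientR(f)} and Proposition~\ref{fix.point.index}). So you are proposing to derive the background theorem from the paper's main results plus Fadell--Wong, inverting the paper's logical order, in which Theorems~\ref{KWfix} and~\ref{FWresult} are taken as two independent pre-existing answers and the paper builds the bridge between them. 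What the Klein--Williams approach buys is a self-contained proof that needs neither Nielsen theory nor \cite{FW88}; what your route would buy, once Section~\ref{sec4} is in hand, is an alternative proof of the converse that makes the equivalence of the two invariants do the work.
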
 

The dimension conditions in the hypothesis of Theorem \ref{KWfix} are crucial and cannot be omitted, as demonstrated by Ferrario \cite[Example 5.1]{Ferrario99}. Moreover, these conditions are equivalent to the standard gap condition, which appears as the dimension hypothesis in Theorem \ref{FWresult}.  

An equivariant framed bordism group $\Omega_n^{G,\fr}(X)$ is defined on the set of equivalence classes of triples $(M, g, \phi)$ under the relation of bordism. Here, $(M, g, \phi)$ consists of an equivariant (tangential) stable framed $n$-manifold $M$, an equivariant map $g: M \to X$, and a $G$-homotopy class of $G$-bundle trivializations $\phi$ of the tangent bundle $\tau_M$, given by  
\[
\phi: \tau_M \oplus (\mathbb{R}^k \times M) \cong_G (\mathbb{R}^{n+k} \times M),
\]
where $\mathbb{R}^k$ is considered as a trivial $G$-representation.  

This definition corresponds to the naive equivariant bordism and does not incorporate $RO(G)$-graded bordism groups. For a more general treatment of equivariant framed bordism in $RO(G)$-grading and its Pontryagin-Thom construction into equivariant stable homotopy groups, we refer to \cite{equivframed.kosniowski}.  

In this paper, we focus on the $0$-th equivariant framed bordism group, and in this case, the Pontryagin-Thom construction yields the following isomorphism:  
\[
\Omega_0^{G,\fr}(X) \cong \pi_0^{G,st}(X) := \mathop\colim_{n\to \infty} [S^n, S^n \wedge X_+]_G.
\]  

\begin{remark}\label{tomDieck} The tom Dieck splitting \cite{Dieck1987} for this equivariant framed bordism group decomposes it into a direct sum of non-equivariant framed bordism groups, indexed by the conjugacy classes of subgroups of $G$.

\begin{align*}
\Omega_0^{G,\fr}(\Loop_f M) \cong \bigoplus_{(H)} \Omega_0^{\fr} (EWH \times_{WH} \Loop_f M^H),
\end{align*}
where $WH$ is Weyl group defined by $WH:= N_G(H)/H$.
\end{remark}

The conjecture stated by Klein and Williams is the main motivation of this paper. To this end, we first analyze the Klein-Williams invariant $\ell_G(f)$ in detail and then provide an affirmative answer to the following conjecture.

\begin{conjecture}{\cite{KW2}}
    The Nielsen numbers $N(f^H)$ of the induced maps on the fixed sets for each conjugacy class of subgroups $H$ of $G$ can be computed from the projection of $\ell_G(f)$ onto the corresponding summand in the tom Dieck splitting.
\end{conjecture}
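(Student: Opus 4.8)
The plan is to compute each summand of $\ell_G(f)$ under the tom Dieck splitting of Remark~\ref{tomDieck} as an explicit signed count of Nielsen classes of $f^H$, and to read off $N(f^H)$ from it. Write $\ell_\Gamma(g)\in\Omega_0^{\Gamma,\fr}(\Loop_g X)$ for the Klein--Williams invariant of a $\Gamma$-self-map $g$ of a closed $\Gamma$-manifold $X$ (the non-equivariant invariant $\ell(g)$ when $\Gamma$ is trivial), and $R(g)=\pi_0(\Loop_g X)$ for the set of Reidemeister (semiconjugacy) classes, so that $\Omega_0^{\fr}(\Loop_g X)\cong\pi_0^{st}((\Loop_g X)_+)\cong\Z[R(g)]$. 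The first step is the non-equivariant input: perturbing $g$ to be transverse to the diagonal and collecting the local indices of its isolated fixed points according to their Nielsen class, the Pontryagin--Thom description of $\ell(g)$ yields
\[
\ell(g)\;=\;\sum_{\mathbf c\in R(g)}\operatorname{ind}(g,\mathbf c)\,[\mathbf c]\ \in\ \Z[R(g)],
\]
i.e. $\ell(g)$ is the classical Reidemeister trace, so $N(g)$ equals the number of components of $\Loop_g X$ on which $\ell(g)$ is nonzero.

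Next I would identify the projection of $\ell_G(f)$ onto the $(H)$-summand, which I denote $\ell_G(f)_{(H)}\in\Omega_0^{\fr}(EWH\times_{WH}\Loop_f M^H)$. The key geometric fact is that the Klein--Williams construction is compatible with geometric fixed points: since $(\Loop_f M)^H=\Loop_f M^H$, $\Fix(f)^H=\Fix(f^H)$, and applying $\Phi^H$ (geometric $H$-fixed points) to the equivariant Atiyah duality of $M$ and to the equivariant Pontryagin--Thom collapse produces the corresponding data for $M^H$, one obtains $\Phi^H(\ell_G(f))=\ell_{WH}(f^H)$, the $WH$-equivariant Klein--Williams invariant of the $WH$-self-map $f^H$ of the closed $WH$-manifold $M^H$. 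Moreover the tom Dieck projection onto the $(H)$-summand for $G$ factors as $\Phi^H$ followed by the projection onto the free $(e)$-summand of the tom Dieck splitting for the group $WH$ (a routine transitivity check on Burnside rings confirms this). Thus the problem reduces to describing the free part of $\ell_{WH}(f^H)$.

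To compute that free part I would use the dimension hypotheses of Theorem~\ref{KWfix}. The gap condition $\dim M^K\le\dim M^H-2$ for $K\supsetneq H$ means the singular locus $M^{>H}:=\bigcup_{K\supsetneq H}M^K$ has codimension $\ge2$ in $M^H$, and $WH$ acts freely on its complement; together with $\dim M^H\ge3$ this allows one to $WH$-homotope $f^H$ (equivariant general position, as in \cite{FW88}) so that $\Fix(f^H)$ is a finite free $WH$-set contained in $M^H\setminus M^{>H}$, each fixed point carrying a local-index sign $\pm1$; this changes neither $\ell_{WH}(f^H)$ nor $N(f^H)$ nor the indices $\operatorname{ind}(f^H,\mathbf c)$, all being homotopy invariants. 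Unwinding the free summand of the tom Dieck splitting on this representative --- a free $WH$-orbit of fixed points lying in a class $\mathbf c$ contributes $+1$ (up to its sign) to the component of $EWH\times_{WH}\Loop_f M^H$ indexed by the $WH$-orbit $WH\cdot\mathbf c$, and the $WH$-orbits of fixed points hitting that component correspond bijectively to the $(WH)_{\mathbf c}$-orbits in $\mathbf c$ --- gives
\[
\ell_G(f)_{(H)}\;=\;\sum_{[\mathbf c]\in R(f^H)/WH}\frac{\operatorname{ind}(f^H,\mathbf c)}{|(WH)_{\mathbf c}|}\,[\mathbf c]\ \in\ \Omega_0^{\fr}\bigl(EWH\times_{WH}\Loop_f M^H\bigr)\cong\Z\bigl[R(f^H)/WH\bigr],
\]
where $(WH)_{\mathbf c}$ is the $WH$-stabilizer of $\mathbf c$ (the coefficient is an integer because $(WH)_{\mathbf c}$ acts freely on $\mathbf c$). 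Hence the $[\mathbf c]$-coefficient is nonzero exactly when $\mathbf c$ is essential; the number of nonzero coefficients of $\ell_G(f)_{(H)}$ is the equivariant Nielsen number of $f^H$, and
\[
N(f^H)\;=\;\sum_{[\mathbf c]:\ \text{coeff}\ne0}[WH:(WH)_{\mathbf c}],
\]
which is determined by $\ell_G(f)_{(H)}$ together with the computable $WH$-action on $R(f^H)$ --- this is the affirmative answer to the conjecture.

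The main obstacle is the middle step: making rigorous the compatibility of the Klein--Williams invariant with $\Phi^H$ and its identification with the free summand for $WH$. The tom Dieck projections are not individually induced by maps of spectra, so one must argue either through the isotropy-separation cofibre sequence (the $EWH_+\to S^0\to\widetilde{EWH}$ square) or, more concretely, by analysing the local Pontryagin--Thom data of $f$ stratum by stratum along the orbit-type filtration, controlling the local equivariant indices --- valued in Burnside rings --- of fixed points lying in singular strata. It is precisely the gap hypothesis that removes this difficulty, allowing all fixed points of $f^H$ to be pushed into the free locus so that these local indices degenerate to ordinary signs and no correction terms from the lower fixed-point manifolds $M^K$ remain; without it the formula acquires such corrections, consistent with the necessity of the dimension hypotheses exhibited by Ferrario~\cite{Ferrario99}. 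The remaining points --- that the equivariant homotopy preserves both sides, and the bookkeeping of orbit sizes and signs in the last displayed formula --- are routine once this framework is in place.
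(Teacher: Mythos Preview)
Your approach is sound and reaches the right conclusion, but it takes a different and unnecessarily restricted route compared with the paper. The paper's argument hinges on a single elementary observation (Proposition~\ref{fix.point.index}): for any equivariant self-map $f$ and any $g\in G$, the fixed point indices satisfy $i(f,[x])=i(f,[gx])$, proved directly from the definition by transporting the local chart along $g$. From this it follows immediately that the quotient of $R(f^H)$ by the $WH$-action has $[\overline{\mathbf c}]$-coefficient equal to $|WH\cdot\mathbf c|\cdot i(f^H,\mathbf c)$, hence vanishes exactly when $i(f^H,\mathbf c)$ does, so $N(f^H)$ is recoverable. No dimension or gap hypothesis enters.

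Your argument instead invokes the gap hypothesis of Theorem~\ref{KWfix} to $WH$-equivariantly isotope all fixed points of $f^H$ into the free locus, and then reads off the free tom Dieck summand by an explicit orbit count. This is correct, and in fact your formula $i(f^H,\mathbf c)/|(WH)_{\mathbf c}|$ is the genuine tom Dieck projection (the paper's expression differs from it by the harmless global factor $|WH|$, reflecting that the paper's ``quotient of $\ell(f^H)$'' is not literally the tom Dieck projection but an isomorphic rescaling of it). However, your approach proves the conjecture only under the gap hypothesis, whereas the statement carries no such assumption; and the perturbation step is doing nothing more than geometrically enforcing the equality $i(f^H,[x])=i(f^H,[wx])$, which Proposition~\ref{fix.point.index} gives for free.

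On the structural side you are more careful than the paper: you correctly isolate the compatibility $\Phi^H(\ell_G(f))=\ell_{WH}(f^H)$ and the identification of the $(H)$-summand with the free $WH$-summand as the genuine content, whereas the paper's proof of Theorem~\ref{splittingresult} asserts the analogous decomposition of $[s]$ without justification. So your outline is in some ways more honest about where the work lies; but once that compatibility is granted, the paper's Proposition~\ref{fix.point.index} is the shorter and hypothesis-free way to finish.
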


The structure of the paper is as follows. In Section \ref{sec2}, we provide the necessary background to construct the Klein-Williams invariant. In Section \ref{sec3}, we present a detailed construction of this invariant, which is essential for establishing its decomposition under the tom Dieck splitting.

In Section \ref{sec4}, we prove Theorem \ref{splittingresult}, which provides a splitting formula for the Klein–Williams invariant. After establishing this splitting, we introduce the geometric Reidemeister trace $R(f)$ and show in Theorem \ref{ell(f)} that the non-equivariant invariant $\ell(f)$ coincides with $R(f)$. This identification leads to Theorem \ref{ident3}, which relates the components of the Klein–Williams invariant to the reduced Reidemeister traces.

We conclude Section \ref{sec4} with Proposition \ref{fix.point.index}, which enables the proof of Theorem \ref{quotientR(f)}. This result resolves Conjecture \ref{conj} by showing that the Klein–Williams invariant $\ell_G(f)$ vanishes if and only if $N(f^H) = 0$ for all conjugacy classes $(H)$ of subgroups.

In Section \ref{sec5}, we apply this fixed-point theory to a periodic point problem. The obstruction theory for deforming a map to eliminate periodic points of period $n$ is developed in \cite[Section 11]{KW2}. Corollary \ref{proof.conj} shows that the obstruction $\ell_n(f)$ is a complete invariant and contains the same amount of information as the Nielsen numbers $N(f^k)$ for all divisors $k$ of $n$. We conclude the paper with an explicit Example \ref{ex.periodic}, demonstrating that the number of nonzero terms in the projections of the Klein-Williams invariant does not always match the corresponding Nielsen numbers, even though both vanish simultaneously.

\begin{Acknowledgment}
This work is part of the author's PhD project under the supervision of Thomas Schick. The author expresses sincere gratitude to Thomas Schick for his invaluable guidance, continuous support, and many fruitful discussions throughout the development of this work. This work was supported by the German Academic Exchange Service (DAAD) through the Graduate School Scholarship Programme.
\end{Acknowledgment}

\section{Preliminaries}\label{sec2}

In this section, we provide definitions and results in parametrized homotopy theory, which are crucial for the formulation of the invariant introduced by Klein and Williams. For detailed explanations, refer to \cite{may2004parametrized} and \cite{lewis2006equivariant}.

\subsection*{Fiberwise $G$-Spaces.} 

We will adopt the same notation used by Klein and Williams \cite{KW2}. Let $B$ be a $G$-space, and $T(B;G)$ is the category of $G$-spaces over $B$. That is, its objects are $G$-spaces $X$ equipped with a structure $G$-map $p: X \to B$. Morphisms are just $G$-maps which are compatible with the structure maps. If objects $X$ have extra structure map $ s \colon B \to X$ such that $p \circ s = \text{id}_B$, and maps between these objects are compatible with both structure maps, then we denote this ``retractive" version of category of $G$-spaces over $B$ as $R(B;G)$.

Both $T(B;G)$ and $R(B;G)$ are a model category, and the following definitions of \textit{weak equivalence}, \textit{fibration} and \textit{cofibration} maps are the same in both categories. Details of the model categories can be found on \cite{hovey2007model}.

A map $f: X \to Y$ is called \textit{weak equivalence} if for every subgroup $H$ of $G$, the induced map of fixed points $f^H:X^H \to Y^H$ is a weak homotopy equivalence. Moreover, $f$ is called \textit{fibration} if the induced map $f^H$ is a Serre fibration for every subgroup $H$. It is called \textit{cofibration} if there is a relative $G$-cell complex $(Z,X)$ such that $Y$ is a retract of $Z$ relative to $X$.

\begin{definition}
Let $E$ be an object in $T(B;G)$. The \textit{unreduced fiberwise suspension} of $E$ over $B$ is object $S_B E$ in $T(B;G)$ given by
\begin{align*}
    S_B E := B \times \partial [0,1] \bigcup_{E \times \partial [0,1]} E \times [0,1].
\end{align*}
\end{definition}

\begin{definition}
Let $E$ be an object in $R(B;G)$. The \textit{reduced fiberwise suspension} of $E$ over $B$ is object $\Sigma_B E$ in $T(B;G)$ given by the pushout of the diagram
\begin{align*}
    B \leftarrow S_B B \rightarrow S_B E
\end{align*}
where $S_B B \to S_B E$ is just the application of $S_B$ functor to the structure map $B \to E$.
\end{definition}

After defining the parametrized smash product, we will explore its relationship with the reduced suspension in the fiberwise setting. This connection will be useful in our later discussions.

\begin{definition}
Let $X$ and $Y$ be objects in $R(B;G)$. The \textit{internal smash product} or \textit{fiberwise smash product} $X \wedge_B Y$ of $X$ and $Y$ is the object given by the following pushout of the diagram
\begin{align*}
B \leftarrow X \cup_B Y \rightarrow X \times_B Y
\end{align*}
where $X\times_B Y$ is the fiber product of $X$ and $Y$.
\end{definition}

\begin{proposition} \label{redsus}
Let $E$ be an object in $R(B;G)$. Then, $n$-th iterated reduced fiberwise suspension of $E$ is homeomorphic to the following smash product.
\begin{align*}
  \Sigma_B^n E  \cong (S^n \times B) \wedge_B E 
\end{align*}    
\end{proposition}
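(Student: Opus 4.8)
The plan is to induct on $n$. The base case $n=1$, asserting $\Sigma_B E\cong(S^1\times B)\wedge_B E$, is the only step requiring real work: one has to match the two pushout presentations by hand. The inductive step then drops out from associativity of the fiberwise smash product together with the identification $(S^1\times B)\wedge_B(S^{n-1}\times B)\cong S^n\times B$.

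For the base case I would first unwind $S_B E$. Since every object of $R(B;G)$ carries a section, its structure map $p\colon E\to B$ is surjective, so the adjoined copy of $B\times\partial[0,1]$ in the pushout defining $S_B E$ is redundant: $S_B E$ is the quotient of $E\times[0,1]$ collapsing $E_b\times\{0\}$ and $E_b\times\{1\}$ (the fiberwise preimages of $0$ and $1$) to points. Since $S_B B\cong B\times[0,1]$ with structure map the projection, and $S_B(s)\colon S_B B\to S_B E$ sends $(b,t)$ to $(s(b),t)$, the pushout $\Sigma_B E$ of $B\leftarrow S_B B\to S_B E$ is the quotient of $E\times[0,1]$ by the relation that collapses, for each $b\in B$,
$$\bigl(E_b\times\{0\}\bigr)\;\cup\;\bigl(E_b\times\{1\}\bigr)\;\cup\;\bigl(\{s(b)\}\times[0,1]\bigr)$$
to a single point, the collapsed points forming the section. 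On the other side, write $S^1=[0,1]/\partial[0,1]$ with basepoint $*$ the image of $\partial[0,1]$. Then $(S^1\times B)\times_B E\cong E\times S^1$, and the fiberwise fat wedge $\bigl(s_{S^1\times B}\times_B\mathrm{id}_E\bigr)\cup\bigl(\mathrm{id}_{S^1\times B}\times_B s\bigr)$ corresponds to $(E\times\{*\})\cup(s(B)\times S^1)$; collapsing it to the section and unfolding $S^1$ back to $[0,1]$ presents $(S^1\times B)\wedge_B E$ as the quotient of $E\times[0,1]$ that collapses, for each $b$, exactly the subspace displayed above to a point. The two quotients of $E\times[0,1]$ are therefore identical — crucially, all the collapse maps land compatibly in $B$, so the equivalence relations agree on the nose and not just fiberwise — and the identity of $E\times[0,1]$ descends to a homeomorphism $\Sigma_B E\xrightarrow{\ \cong\ }(S^1\times B)\wedge_B E$ over $B$. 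It is $G$-equivariant (the $[0,1]$/$S^1$ factor carries the trivial $G$-action) and section-preserving, hence an isomorphism in $R(B;G)$.

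For the inductive step I would write $\Sigma_B^nE=\Sigma_B\bigl(\Sigma_B^{n-1}E\bigr)$ and apply the base case to the object $\Sigma_B^{n-1}E\in R(B;G)$ to get $\Sigma_B^nE\cong(S^1\times B)\wedge_B\Sigma_B^{n-1}E$; then the inductive hypothesis gives $\Sigma_B^{n-1}E\cong(S^{n-1}\times B)\wedge_B E$, and associativity of $\wedge_B$ — which on compactly generated ex-spaces identifies both iterated smashes with the quotient of the triple fiber product by the fat wedge — yields $\Sigma_B^nE\cong\bigl((S^1\times B)\wedge_B(S^{n-1}\times B)\bigr)\wedge_B E$. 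Finally, for trivial sphere bundles the fiber product over $B$ is the external product crossed with $B$ and the fat wedge is $(S^1\vee S^{n-1})\times B$, so $(S^1\times B)\wedge_B(S^{n-1}\times B)\cong(S^1\wedge S^{n-1})\times B=S^n\times B$, closing the induction.

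The main obstacle is the base-case bookkeeping: one must track the basepoint sections carefully, verify that the various collapse maps land compatibly in $B$ (this is what upgrades a fiberwise isomorphism to an honest homeomorphism of total spaces), and use the surjectivity of $p$ to discard the auxiliary $B\times\partial[0,1]$. It is also worth recalling the standing convention that all spaces are compactly generated, so that the quotients and fiber products involved behave well and the homeomorphism claim is literal. A shorter but less self-contained route would observe that $\Sigma_B(-)$ and $(S^1\times B)\wedge_B(-)$ are both left adjoint to the fiberwise loop functor $\Omega_B$ on $R(B;G)$ and invoke uniqueness of adjoints; as these are continuous adjunctions, the resulting natural isomorphism is automatically a homeomorphism on each object.
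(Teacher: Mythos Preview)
Your proof is correct and takes a genuinely different route from the paper's. The paper does not induct: it computes, for arbitrary $n$, the fiber of $(S^n\times B)\wedge_B E$ over a point $b\in B$ directly from the pushout definition of $\wedge_B$, identifies it with $S^n\wedge p^{-1}(b)\cong\Sigma^n p^{-1}(b)$, and then declares the two objects ``homeomorphic fiberwise $G$-spaces''. Your approach instead establishes the $n=1$ case by exhibiting both sides as the \emph{same} quotient of $E\times[0,1]$, and then closes the induction via associativity of $\wedge_B$ and $(S^1\times B)\wedge_B(S^{n-1}\times B)\cong S^n\times B$. What your approach buys is rigor: you actually produce a global homeomorphism of total spaces over $B$, whereas the paper's fiberwise identification, as written, stops short of explaining why matching fibers assemble into a homeomorphism of the total spaces (one needs a continuous map over $B$ inducing those identifications, which the paper does not write down). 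The paper's argument is shorter and makes the fiberwise picture transparent for all $n$ at once; yours is longer but self-contained, and your explicit observation that the collapse maps land compatibly in $B$ is precisely the missing ingredient that upgrades a fiberwise statement to an honest isomorphism in $R(B;G)$. Your alternative adjoint-functor remark at the end is also a clean way to finish and is not mentioned in the paper.
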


\begin{proof}
It is clear that $S^n \times B$ is an object in $R(B;G)$ with structure maps given by
\[
\operatorname{pr}_2 : S^n \times B \to B, (b,x) \mapsto b  \: \text{   and  } s: B \to S^n \times B,  b \mapsto (b,x_0)
\]
for some $x_0 \in S^n$.

We consider the fiber of $(S^n \times B) \wedge_B E$. By the definition of fiberwise smash product, we have
\[
(S^n \times B) \wedge_B E= (S^n \times B) \times_B E \cup_{(S^n \times B) \cup_B E} B.
\]
Moreover, the inverse of the structure map $p$ of $(S^n \times B) \wedge_B E$ at a point $b \in B$ is the following
\[
(S^n \times p^{-1}(b))  \cup_{(S^n \times \{b\}) \cup_{\{b\}} p^{-1}(b)} \{b\} \cong (S^n \times p^{-1}(b)) /S^n  \cup_{\{b\}} p^{-1}(b).
\]

This identification yields the smash product $S^n \wedge p^{-1}(b)$. Since 
$$
S^n \wedge p^{-1}(b) \cong \Sigma^n p^{-1}(b),
$$

two objects $(S^n \times B) \wedge_B E$ and $\Sigma^n_B E$ are homeomorphic fiberwise $G$-spaces.
\end{proof}

\begin{definition}
\textit{A naive parametrized $G$-spectrum} $\mathcal{E}$ consists of a family of objects $\mathcal{E}_n \in R(B;G)$ equipped with maps $\Sigma_B \mathcal{E}_n \to \mathcal{E}_{n+1}$.
\end{definition}

\begin{example}
Let $X \in R(B;G)$ be an object. Then, the \textit{naive parametrized suspension spectrum} $\Sigma_B^\infty X$ has $n$-th object $\Sigma^n_BX$, which is the $n$-th iterated reduced fiberwise suspension of $X$.
\end{example}

\begin{definition}
Let $X$ be an object in $T(B;G)$. The \textit{cohomology} of $X$ with coefficients in $\mathcal{E}$ is the group given by
\begin{align*}
    H^k_G(X;\mathcal{E}) := \mathop\colim_{n \to \infty} [\Sigma_B^n X^+, \mathcal{E}_{n+k}]_{R(B;G)}
\end{align*}
where $X^+=X \sqcup B$.
\end{definition}

Similarly, one can define naive parametrized equivariant homology as follows.

\begin{definition}
Let $X$ be an object in $T(B;G)$. The \textit{homology} of $X$ with coefficients in $\mathcal{E}$ is the group given by
\begin{align*}
    H_k^G(X;\mathcal{E}) := \mathop\colim_{n \to \infty} [S^{n+k}, (X^+ \wedge_B \mathcal{E}_{n})]_{R(*;G)}
\end{align*}
\end{definition}

\section{The Klein-Williams Invariant}\label{sec3}

In this section, we provide a construction of the Klein-Williams invariant $\ell_G(f)$, which is essential for understanding its decomposition under the tom Dieck splitting.

Klein and Williams first developed an obstruction theory for a general problem known as the \textit{intersection problem}. The \textit{equivariant intersection problem} asks the following: given a compact $G$-manifold $N$ with a closed submanifold $Q$ and an equivariant map $f \colon P \to N$, where $P$ is a closed $G$-manifold, when can one find an equivariant homotopy from $f$ to a map into the complement of $Q$?  

\[
\begin{tikzcd} 
\& \: \& N-Q \arrow[d, ""]\\
\& P \arrow[ur,dashed,""] \arrow[r,"f"'] \& N
\end{tikzcd}
\]

We will explain how the equivariant intersection problem can be applied to the equivariant fixed-point problem. Let $M$ be a closed, smooth $G$-manifold, where $G$ is a finite group. First, consider the following commutative square of equivariant mapping spaces.  

\begin{equation}\label{infinitycartesian}
\begin{tikzcd}
\& \operatorname{end}^{\star}(M)^{G} \arrow[r, hook] \arrow[d,"\Gamma"] \& \operatorname{end}(M)^{G} \arrow[d,"\Gamma"] \\
\& \operatorname{map}(M, M \times M - \Delta)^{G} \arrow[r, hook] \& \operatorname{map}(M, M \times M)^{G}
\end{tikzcd}
\end{equation}

Here, $\Delta \subset M \times M$ is the diagonal subspace, the action on $M \times M$ is the diagonal $G$-action, $\operatorname{end}(M)^G$ is the space of equivariant self-maps of $M$, and $\operatorname{end}^{\star}(M)^G$ denotes the space of fixed-point-free $G$-maps from $M$ to itself. The horizontal maps are injections, and the vertical maps $\Gamma$ send a map $f$ to $\Gamma_f$, which is the graph of $f$. 

Suppose that this square is $0$-cartesian; that is, the induced map on the set of components of the universal map from $\operatorname{end}^{\star}(M)^G$ to the homotopy pullback of the square is surjective. Then, solving the equivariant fixed-point problem reduces to solving the equivariant intersection problem for the following diagram:  

\begin{equation}\label{graphmap}
\begin{tikzcd}
\& \: \& M \times M - \Delta \arrow[d, ""]\\
\& M \arrow[ur,dashed,""] \arrow[r,"\Gamma_f"'] \& M \times M
\end{tikzcd}
\end{equation}

The square \eqref{infinitycartesian} is indeed $\infty$-cartesian, meaning that it is a homotopy pullback. This is proven in \cite[Lemma 10.1]{KW2}. As a result, by applying the diagram \eqref{graphmap} to the obstruction theory for the equivariant intersection problem, we obtain the Klein-Williams invariant $\ell_G(f)$.

To achieve this, consider the inclusion map $M \times M - \Delta \to M \times M$ and convert this map into a fibration:  
\begin{align*}
    M \times M - \Delta \simeq E \to M \times M
\end{align*}

Explicitly, $E$ is the pullback of path fibration along the inclusion map, and the fibration map from $E$ to $M\times M$ is given by $((m_1,m_2),\lambda) \mapsto \lambda(1)$. Finding a lift such that the diagram \eqref{graphmap} homotopy commutes is equivalent to finding an equivariant section for the fibration $\Gamma_f^*E \to M$.

The following theorem shows that the class $[s] \in H^0_G(M; \Sigma_M^\infty S_M \Gamma_f^*E)$ obtained by the map
\begin{align*}
    s:= s_- \sqcup s_+ \colon M^+ \to S_M \Gamma_f^*E
\end{align*}
gives the obstruction to finding an equivariant section.

\begin{theorem}{\cite[3.1]{KW2}}
Let $E$ be a fibrant object in the category $T(M;G)$. If $E \to M$ admits an equivariant section, then $[s]$ is trivial. Conversely, If $[s]$ is trivial, $E^H$ is $r_H$-connected, and $m_H \leq 2r_H +1$, where $m_H=\dim M^H$, then $E \to B$ admits an equivariant section.
\end{theorem}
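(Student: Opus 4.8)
The plan is to treat the two implications separately. The forward direction is the soft one, and for it I would argue directly: the fiber of $S_M E\to M$ over a point $b$ is the unreduced suspension of the fiber $E_b$, and the sections $s_-,s_+$ pick out its two cone points, so the class $[s]$ is just the (stabilized) comparison of these two pole sections. Given an equivariant section $\sigma\colon M\to E$, I would build $H\colon M\times[0,1]\to S_M E$ over $M$ by sending $(b,t)$ to the image of $(\sigma(b),t)\in E\times[0,1]$ in $S_M E$. Since $E\times\{0\}$ and $E\times\{1\}$ are glued onto the two pole copies of $M$ inside $S_M E$, this is a well-defined continuous map over $M$, it is $G$-equivariant because $\sigma$ is, and $H_0=s_-$, $H_1=s_+$. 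Hence $s_-$ and $s_+$ are fiberwise $G$-homotopic over $M$ already before any stabilization, so $[s]=0$.

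For the converse the first step is to repackage both the hypothesis and the conclusion. Unwinding $H^0_G(M;\Sigma_M^\infty S_M E)$ as the colimit over $n$ of fiberwise $G$-homotopy classes of maps out of $\Sigma_M^n M^+$ --- using Proposition \ref{redsus} to identify $\Sigma_M^n$ with smashing with $S^n\times M$ over $M$ --- the vanishing $[s]=0$ says precisely that after finitely many fiberwise suspensions the two pole sections of $S_M E$ become fiberwise $G$-homotopic. On the other hand, $E$ includes into $S_M E$ as the equator and is a fiberwise $G$-deformation retract of the complement of the two pole sections, so a section of $E\to M$ is the same datum as a section of $S_M E\to M$ whose image avoids both poles. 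I would therefore reduce the converse to a destabilization statement: promote a stable fiberwise $G$-homotopy from $s_-$ to $s_+$ to an unstable section of $S_M E\to M$ staying off the poles, equivalently a partial inverse to the fiberwise stabilization of the fibers of $E\to M$, valid in the metastable range.

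That destabilization I would carry out by induction over the orbit-type filtration of $M$, ordering the isotropy types and extending the section one stratum at a time. Since $E$ is fibrant, each $E^H\to M^H$ is a Serre fibration, and extending a partial equivariant section across an equivariant cell of orbit type $G/H$ and dimension $k+1$ meets an obstruction controlled by $\pi_k$ of the fiber $F^H$ of $E^H\to M^H$; the $r_H$-connectivity of $E^H$ forces these to vanish for $k$ below a range comparable to $r_H$, so the only obstructions that can survive occur in degrees $k\le m_H=\dim M^H$. The bound $m_H\le 2r_H+1$ places all of these in the metastable range, where the Freudenthal theorem --- equivalently a fiberwise Blakers--Massey estimate --- makes the relevant suspension map surjective; hence the vanishing of the stable class already obtained forces every unstable obstruction to vanish, the section extends over each successive stratum, and one obtains a global equivariant section of $E\to M$.

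The step I expect to be the main obstacle is running this metastable destabilization fiberwise over $M$ and compatibly over the equivariant cell structure at the same time, so that the estimate $m_H\le 2r_H+1$ is applied to the homotopy of each fiber $F^H$ in turn rather than to a single ambient fiber. Equally delicate is the bookkeeping of basepoints and of the two pole sections through the colimit that defines $H^0_G$, together with the verification that the destabilized homotopy can genuinely be pushed off the poles and is not merely obstruction-theoretically null. The remaining work --- the identifications of iterated fiberwise suspensions with smash products and the passage between the parametrized categories $T(M;G)$ and $R(M;G)$ --- should be formal.
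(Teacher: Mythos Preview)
The paper does not give a proof of this theorem: it is quoted as \cite[3.1]{KW2} and used as a black box in the construction of $\ell_G(f)$. There is therefore no proof in the present paper to compare your proposal against.

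For what it is worth, your outline matches the shape of the argument in Klein--Williams. The forward direction is exactly the explicit fiberwise homotopy you wrote down. For the converse, the scheme you describe --- equivariant obstruction theory over the isotropy stratification of $M$, with a Freudenthal/Blakers--Massey estimate used to destabilize the vanishing of $[s]$ in the range $m_H\le 2r_H+1$ --- is the content of their proof. One point to watch: you slide between ``$E^H$ is $r_H$-connected'' and ``the fiber $F^H$ of $E^H\to M^H$ is $r_H$-connected'' as if they were interchangeable, but the obstructions to extending a section live in the homotopy of the \emph{fiber}, not of the total space. Make sure you pin down which object the hypothesis actually bounds and that the index shift between the cell dimension $k$, the suspension degree, and the connectivity $r_H$ lines up so that the Freudenthal surjectivity range really covers all $k\le m_H$.
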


The following isomorphism between the parametrized equivariant cohomology group and the equivariant framed bordism group is known; indeed, a more general version appears in the proof of \cite[Theorem B]{KW2}. This isomorphism defines the Klein-Williams invariant. 

\begin{theorem} \label{iso}
There exists isomorphism between groups
\begin{align*}
H^0_G(M;\Sigma_M^\infty S_M \Gamma_f^*E) \cong \Omega_0^{G,\fr}(\Loop_f M)
\end{align*}
\end{theorem}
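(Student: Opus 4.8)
The plan is to recognize the left-hand side as the zeroth equivariant stable homotopy group of a Thom spectrum over a homotopy pullback, and then to observe that this Thom spectrum is (stably) the suspension spectrum of $(\Loop_f M)_+$, so that the Pontryagin--Thom isomorphism $\pi_0^{G,st}(\Loop_f M)\cong\Omega_0^{G,\fr}(\Loop_f M)$ recorded above completes the identification. This is precisely the fixed-point specialization, applied to the cospan $M\xrightarrow{\Gamma_f}M\times M\xleftarrow{\Delta}M$, of the fiberwise Pontryagin--Thom and duality package underlying the proof of \cite[Theorem B]{KW2}; the substantive part of the argument is to carry out the three identifications below.

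First I would reduce the parametrized cohomology to equivariant stable homotopy of a global spectrum. Unwinding the definition of $H^0_G(M;-)$ and using Proposition~\ref{redsus} to rewrite the iterated fiberwise suspensions $\Sigma_M^n$ as fiberwise smash products with $S^n\times M$, an element of $H^0_G(M;\Sigma_M^\infty S_M\Gamma_f^*E)$ is a stable fiberwise $G$-homotopy class of maps over $M$ agreeing with the section $s_+$ outside a compact set. Since $M$ is a closed smooth $G$-manifold it embeds equivariantly in a representation $V$ with equivariant normal bundle $\nu$ satisfying $\tau_M\oplus\nu\cong M\times V$; equivariant Atiyah duality then trades parametrized cohomology over $M$ for equivariant stable homotopy, the ``integration over $M$'' converting $\Sigma_M^\infty S_M\Gamma_f^*E$ into a global spectrum assembled from the complement fibration $E\to M\times M$ (a fibrant replacement of $M\times M-\Delta\hookrightarrow M\times M$, as in \cite[Lemma 10.1]{KW2}) twisted down by $\tau_M$.

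Second, I would identify the base of the resulting Thom spectrum. The closed submanifold $\Delta\cong M$ has equivariant normal bundle isomorphic to $\tau_M$, and pulling the complement fibration back along $\Gamma_f$ and passing to the associated homotopy pullback produces the homotopy pullback of $M\xrightarrow{\Gamma_f}M\times M\xleftarrow{\Delta}M$; this is canonically $G$-equivalent to $\Loop_f M$ via $\lambda\mapsto(\lambda(0),(\lambda,c_{\lambda(1)}),\lambda(1))$, under which the map to the ``source'' copy of $M$ is $\operatorname{ev}_0(\lambda)=\lambda(0)$ and the map to the diagonal copy of $M$ is $\operatorname{ev}_1(\lambda)=\lambda(1)=f(\lambda(0))$. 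Third, the twisting virtual bundle over $\Loop_f M$ is $\operatorname{ev}_1^*\tau_M-\operatorname{ev}_0^*\tau_M$ (normal bundle of $\Delta$ pulled back from the diagonal side, minus $\tau_M$ pulled back from the source side), which has rank $0$; and the evaluation homotopy $H\colon\Loop_f M\times[0,1]\to M$, $H(\lambda,t)=\lambda(t)$, from $\operatorname{ev}_0$ to $\operatorname{ev}_1$ furnishes a canonical $G$-bundle isomorphism $\operatorname{ev}_0^*\tau_M\cong\operatorname{ev}_1^*\tau_M$, hence a canonical trivialization of the twist. Therefore the Thom spectrum is $\Sigma^\infty(\Loop_f M)_+$ and
\[
H^0_G(M;\Sigma_M^\infty S_M\Gamma_f^*E)\cong\pi_0^{G,st}(\Loop_f M)\cong\Omega_0^{G,\fr}(\Loop_f M).
\]

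I expect the first identification to be the main obstacle: setting up the equivariant fiberwise duality correctly, i.e.\ checking that $E\to M\times M$ is fiberwise finitely dominated so that fiberwise Spanier--Whitehead/Atiyah duality applies, keeping exact track of the Thom twist produced by integrating over $M$ and of the orientation data so that the cancellation in the third step is genuine and not merely up to an invertible twist, and carrying out the relevant constructions (the $G$-embedding of $M$, the tubular neighborhood of $\Delta$, the fiberwise collapse) equivariantly — harmless in degree $0$ but requiring the equivariant embedding and tubular-neighborhood theorems. A secondary point requiring care is the role of the \emph{unreduced} fiberwise suspension $S_M$: fiberwise it contributes the $S^0$ formed by the two sections $s_\pm$ together with the cone on the fiber of $\Gamma_f^*E$, and one must verify that after stabilization $\Sigma_M^\infty S_M\Gamma_f^*E$ is the fiberwise mapping cone of $(\Gamma_f^*E)_+\to S^0_M$, which is what feeds the duality and makes the degree bookkeeping $\dim M+\dim\Delta-\dim(M\times M)=0$ come out right.
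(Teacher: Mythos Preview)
Your plan is essentially the paper's proof. The paper isolates your second and third steps into a separate lemma (Lemma~\ref{w.e.}) establishing the equivariant weak equivalence $S_M\Gamma_f^*E \simeq S^{\tau_M}\wedge_M(\Loop_f M)^+$ via the equivariant tubular neighborhood of $\Delta$, and then carries out your first step by invoking Hu's parametrized duality $h_\#(S^{-\tau_M}\wedge_M\mathcal{E})\simeq h_*\mathcal{E}$ \cite[Theorem~4.9]{HU}; the twist then cancels on the nose as $S^{\nu}\wedge_M S^{\tau_M}\cong S^{\nu\oplus\tau_M}\cong M\times S^m$ (both bundles sit over $M$ via the same structure map), so your evaluation-homotopy trivialization is not needed.
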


\begin{definition}
The \textit{Klein-Williams} invariant $\ell_G(f)$ is defined as the image of $[s]$ under the isomorphism in Theorem \ref{iso}:

\begin{align*} 
H^0_G(M;\Sigma_M^\infty S_M \Gamma_f^*E) & \xrightarrow{\cong} \Omega_0^{G,\fr}(\Loop_f M)\\
[s] & \mapsto \ell_G(f)
\end{align*}
\end{definition}

Before proving Theorem \ref{iso}, we first establish a homotopy equivalence between $S_M \Gamma_f^* E$ and $S^{\tau_M} \wedge_M \Loop_f M$ in $T(B;G)$, where $S^{\tau_M}$ denotes the fiberwise one-point compactification of the tangent bundle $\tau_M$ of $M$.

\begin{lemma}\label{w.e.}
There is an equivariant weak equivalence between following spaces.
\begin{align*}
S_M \Gamma_f^* E \simeq S^{\tau_M} \wedge_M (\Loop_f M)^+
\end{align*}
\end{lemma}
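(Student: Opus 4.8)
The plan is to realize $\Gamma_f^*E$ as a homotopy pullback and then transport an evident homotopy pushout decomposition of $M\times M$ across that pullback; after this, the unreduced fiberwise suspension $S_M$ turns the ``complement of the diagonal'' into the ``Thom space of the normal bundle of the diagonal'', and the normal bundle of $\Delta$ is $\tau_M$.

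First, since $E\to M\times M$ is a fibration replacement of the inclusion $M\times M-\Delta\hookrightarrow M\times M$, the pullback $\Gamma_f^*E$ is the homotopy pullback of $\Gamma_f\colon M\to M\times M$ along $M\times M-\Delta\hookrightarrow M\times M$. Next I would fix a $G$-invariant Riemannian metric on $M$ and invoke the equivariant tubular neighborhood theorem: $\Delta\subset M\times M$ is a closed $G$-submanifold whose normal bundle is canonically and $G$-equivariantly $\tau_M$ (via $(v_1,v_2)\mapsto v_1-v_2$ on $T(M\times M)|_\Delta=\tau_M\times\tau_M$), and it has a $G$-invariant open tubular neighborhood $U$, $G$-diffeomorphic to the open disk bundle of $\tau_M$, with $U\setminus\Delta\simeq_G S(\tau_M)$ the unit sphere bundle. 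The open cover $\{M\times M-\Delta,\ U\}$ then gives a $G$-equivariant homotopy pushout square with corners $S(\tau_M)$, $M$, $M\times M-\Delta$, $M\times M$, in which the top map $S(\tau_M)\to M$ is the sphere-bundle projection, the right map $M\to M\times M$ is $\Delta_M=(\mathrm{id},\mathrm{id})$, and the left map $S(\tau_M)\to M\times M-\Delta$ is the inclusion of a small sphere bundle around $\Delta$ (one should thicken to a closed tubular neighborhood to see this really is a homotopy pushout, e.g.\ via the gluing lemma). Here $d:=\dim M$ plays no further role; the lemma is unconditional.

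Homotopy pullback along a fixed $G$-map preserves homotopy pushouts — this may be checked on fixed-point spaces $(-)^H$, where it reduces to Mather's cube theorem for ordinary spaces — so applying $\Gamma_f^*$ produces a homotopy pushout square in $T(M;G)$ with corners $\Gamma_f^*S(\tau_M)$, $\Gamma_f^*M$, $\Gamma_f^*E$, and $\Gamma_f^*(M\times M)=M$. The routine path-space manipulation sending a compatible pair of paths $(\gamma_1,\gamma_2)$ to the concatenation $\bar\gamma_1*\gamma_2$ identifies $\Gamma_f^*M$ (the homotopy pullback of $(\mathrm{id},f)$ and $(\mathrm{id},\mathrm{id})$) $G$-equivariantly with $\Loop_fM$ over $M$, with structure map $\mu\mapsto\mu(0)$; and since $S(\tau_M)\to M\times M$ factors through $\Delta_M$, pasting of pullback squares identifies $\Gamma_f^*S(\tau_M)$ with the pulled-back sphere bundle $S(p^*\tau_M)$ over $\Loop_fM$ (where $p\colon\Loop_fM\to M$), whose fiber over $m$ is $(\Loop_fM)_m\times S^{d-1}$. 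One checks directly that the bottom map $\Gamma_f^*E\to M$ of this square is exactly the structure map of $\Gamma_f^*E$. Now observe that the unreduced fiberwise suspension is itself a homotopy pushout, $S_MX\simeq M\cup^h_X M$ (the homotopy pushout of the structure map $X\to M$ with itself, with fiber $\Sigma X_m$ over $m$). Applying this to $X=\Gamma_f^*E$ and pasting the square that presents $M\cup^h_{\Gamma_f^*E}M$ below the square from the previous step along their common edge $\Gamma_f^*E\to M$, the pasting law for homotopy pushouts gives
\[
S_M\Gamma_f^*E\ \simeq\ M\cup^h_{S(p^*\tau_M)}\Loop_fM ,
\]
the homotopy pushout in $T(M;G)$ of $M\leftarrow S(p^*\tau_M)\xrightarrow{\mathrm{proj}}\Loop_fM$.

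Finally, since $M$ is the terminal object of $T(M;G)$, this pushout is the fiberwise mapping cone of the sphere-bundle projection $S(p^*\tau_M)\to\Loop_fM$; replacing $\Loop_fM$ by the equivalent fiberwise disk bundle, this mapping cone is the parametrized Thom space of $p^*\tau_M$. Fiberwise over $m$ it is $\mathrm{Th}\big((\Loop_fM)_m\times T_mM\big)=\big((\Loop_fM)_m\big)_+\wedge S^{T_mM}$, which is precisely the fiber of $S^{\tau_M}\wedge_M(\Loop_fM)^+$; as both descriptions are fiberwise constructions with matching bundle data (and everything has been built from homotopy (co)limits of $G$-spaces and an equivariant tubular neighborhood, hence is automatically $G$-equivariant once checked on the $M^H$), they agree in $T(M;G)$, yielding $S_M\Gamma_f^*E\simeq S^{\tau_M}\wedge_M(\Loop_fM)^+$. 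The step I expect to be the main obstacle is the first: verifying that the cover decomposition of $M\times M$ is a genuine equivariant homotopy pushout and that homotopy pullback along $\Gamma_f$ preserves it; the secondary delicate point is the bookkeeping in the last step — keeping straight which constructions are the fiberwise ones in $T(M;G)$ and confirming that the pulled-back bundle $p^*\tau_M$, read over $M$, really contributes the fiberwise compactification $S^{\tau_M}$ smashed with $(\Loop_fM)^+$.
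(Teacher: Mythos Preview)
Your argument is correct and rests on the same geometric input as the paper's proof: the equivariant tubular neighborhood of $\Delta\subset M\times M$ with normal bundle $\tau_M$, leading to a Thom-type identification. The organizational difference is the order of operations. The paper first forms the unreduced suspension over $M\times M$, obtaining $S_{M\times M}E\simeq S^{\nu}\cup_\Delta(M\times M)$ from the tubular decomposition, and only then pulls back along $\Gamma_f$; because $S^{\nu}\cup_\Delta(M\times M)$ is not fibrant over $M\times M$, the paper performs an explicit path-space fibrant replacement and unwinds the resulting pullback by hand to reach $S^{\tau_M}\wedge_M(\Loop_fM)^+$. You instead pull back the tubular pushout square first (invoking Mather's cube theorem on fixed points to justify that homotopy pullback preserves homotopy pushouts), then realize $S_M$ as a second homotopy pushout and paste. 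Your route is cleaner categorically and sidesteps the explicit fibrant-replacement bookkeeping; the paper's route is more concrete and makes the appearance of $\Loop_fM$ fully explicit.

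Two small points worth tightening. First, the pasting of pullbacks gives $\Gamma_f^*S(\tau_M)\simeq S(q^*\tau_M)$ where $q\colon\Loop_fM\to M$ is the projection to the $\Delta_M$-corner of the homotopy pullback, not the structure map $p=\mathrm{ev}_0$; since $p\simeq q$ via the tautological paths this is harmless, but it is worth saying. Second, your final identification of the parametrized Thom space of $p^*\tau_M$ with $S^{\tau_M}\wedge_M(\Loop_fM)^+$ is correct but would be more transparent stated via the projection formula $p_!(p^*S^{\tau_M})\simeq S^{\tau_M}\wedge_M p_!(S^0_{\Loop_fM})=S^{\tau_M}\wedge_M(\Loop_fM)^+$, rather than a fiberwise check.
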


\begin{proof}
Let $\nu$ be a normal bundle of $\Delta$ in $M \times M$. We can identify disk bundle $D(\nu)$ of $\nu$ with equivariant tubular neighborhood of $\Delta$. Then, 
\[
M \times M \cong D(\nu) \cup_{S(\nu)} (M\times M - \text{int}D(\nu))
\] 
where $S(\nu)$ is sphere bundle of $\nu$.
Consider the following equivariant commutative diagram:
\[
\begin{tikzcd}
\& S(\nu) \arrow[r,""] \arrow[d,""] \& M \times M - \text{int}D(\nu) \arrow[d, ""] \arrow[r,""] \& M\times M \arrow[d,""]\\
\& D(\nu) \arrow[r,""] \& M \times M \arrow[r,""] \& (M\times M) \cup_{M \times M - \text{int}D(\nu)} (M \times M)
\end{tikzcd}
\]

Since the first and second squares are equivariant pushout, the big rectangular must be equivariant pushout. This gives that

\begin{align*}
D(\nu) \cup_{S(\nu)} (M\times M) \cong  (M\times M) \cup_{M \times M - \text{int}D(\nu)} (M \times M).
\end{align*}

Observe that the right hand side is homotopy equivalent with the fiberwise suspension $S_{M \times M}(M \times M - \Delta)$. Moreover, the left hand side of the homeomorphism is homotopy equivalent to $S^{\nu} \cup_\Delta (M\times M)$, where $S^{\nu}$ is fiberwise one-point compactification of $\nu$. Recall that $E \simeq M \times M - \Delta$. Therefore, we have a weak equivalence between based spaces over $M\times M$:
\begin{align*}
S_{M \times M}E \simeq S^{\nu} \cup_\Delta (M\times M)    
\end{align*}

To complete the proof, we pullback these homotopy equivalent based spaces along the graph map $\Gamma_f$. Observe that $S_{M \times M}E$ is a fibrant object in the category $R(M \times M; G)$, and pulling back along $\Gamma_f$ yields the object $S_M \Gamma_f^* E$ in $R(M;G)$.

For the right-hand side, we must proceed more carefully, since the corresponding space is not fibrant in $R(M \times M; G)$. To remedy this, we apply the same fibrant replacement trick as before. Specifically, we consider the pullback of the path fibration along the map
$$
S^{\nu} \cup_\Delta (M \times M) \to M \times M,
$$
which gives rise to the space
$$
(S^{\nu} \cup_\Delta (M \times M)) \times_{M \times M} (M \times M)^I.
$$
The projection map for this fibration is defined as
$$
\begin{aligned}
(S^{\nu} \cup_\Delta (M \times M)) \times_{M \times M} (M \times M)^I &\longrightarrow M \times M \\
(x, \lambda) &\longmapsto \lambda(1).
\end{aligned}
$$
Pulling this fibrant object back along the graph map $\Gamma_f$ gives the following space:
\begin{equation} \label{long}
(S^{\nu} \cup_\Delta (M\times M)) \times_{M\times M} (M \times M)^I  \times_{M \times M} M.  
\end{equation}
It remains to identify this space up to homotopy with $S^{\tau_M} \wedge_M \Loop_f M$. First, observe that
\[
(S^{\nu} \times_{M\times M} (M \times M)^I  \times_{M \times M} M) \cong S^{\tau_M} \times_M \Loop_f M,
\]
by unrevealing the definitions of pullbacks. Similarly, we have
\[\Delta \times_{M\times M} (M \times M)^I  \times_{M \times M} M \cong \Loop_f M.\]
After distributing the fiber products in \eqref{long} and writing everything out explicitly, we observe that \eqref{long} is homeomorphic to the following:
\begin{align*}
(S^{\tau_M} \times_M \Loop_f M) \bigcup_{\Loop_f M} (M \times M)^I \times_{M \times M} M
\end{align*}
Also, note that there exists a homotopy equivalence
$$
h \colon (M \times M)^I \times_{M \times M} M \to M,
$$
given by $(\lambda, m) \mapsto m$. Its homotopy inverse is defined by
$$
m \mapsto (\text{const}_{(m, f(m))}, m),
$$
where $\text{const}_{(m, f(m))}$ denotes the constant path from $m$ to $f(m)$. Using this homotopy equivalence, we deduce that the space in \eqref{long} is homotopy equivalent to
$$
(S^{\tau_M} \times_M \Loop_f M) \cup_{\Loop_f M} M.
$$
This space is homeomorphic to the fiberwise half-smash product over $M$,
$$
S^{\tau_M} \wedge_M (\Loop_f M)^+.
$$
\end{proof}

\begin{proof}[Proof of Theorem \ref{iso}]

By Proposition \ref{redsus}, we have that

\[
\Sigma_M^nM^+\cong (S^n\times M) \wedge_M M^+.
\]

Using the definition of the fiberwise smash product, this space is homeomorphic to $S^n\times M$. Therefore, the parametrized equivariant cohomology of $M$ over itself is given by the following:

\begin{align*}
H^0_G(M;\Sigma_M^\infty S_M \Gamma_f^*E) & := \colim_{n\to \infty}[\Sigma^n_M M^+,\Sigma_M^n S_M \Gamma_f^*E]_{R(M;G)}\\
& = \colim_{n\to \infty}[S^n \times M, \Sigma^n_M S_M \Gamma_f^*E]_{R(M;G)}\\
& \cong \colim_{n\to \infty}[S^n,\sec(\Sigma^n_M S_M \Gamma_f^*E)]_{R(*;G)}
\end{align*}

The last statement follows from the fact that the functor 
\[ 
h^*: R(*;G) \to R(M;G) 
\]
given by $Y \mapsto Y \times M$, has a right adjoint $h_*$. The functor 
\[
h_*: R(M;G) \to R(*;G) 
\]
is defined by $X \mapsto \sec(X)$, where $\sec(X)$ denotes the space of sections of the fiberwise space $X$ over $M$. Moreover, the functor $h^*$ admits a left adjoint $h_\#$, which is given by

\begin{align*}
h_\# : R(M;G) & \to R(*;G)\\
X & \mapsto X/M
\end{align*}

By rewriting the space $\sec(\Sigma^n_M S_M \Gamma_f^*E)$, one can express it as 
\[ 
h_*(h^*(S^n) \wedge_M S_M \Gamma_f^*E) 
\]
using the identity established in Proposition \ref{redsus}. Then, by Lemma \ref{w.e.}, we have

\[
h_*(h^*(S^n) \wedge_M S_M \Gamma_f^*E) \simeq h_*(h^*(S^n) \wedge_M S^{\tau_M} \wedge_M (\Loop_f M)^+).
\]

Thus, we obtained that

\[
H^0_G(M;\Sigma_M^\infty S_M \Gamma_f^*E) = \colim_{n\to \infty}[S^n,h_*(h^*(S^n) \wedge_M S^{\tau_M} \wedge_M (\Loop_f M)^+)]_{R(*;G)}.
\]

Note that there exits a $G$-spectrum $S^{-\tau_M}$ over $M$ indexed on the trivial universe, which is defined by the fiberwise functional dual of $S^{\tau_M}$. That is, 

\[
S^{-\tau_M} \wedge_M S^{\tau_M}   \simeq \Sigma_M^\infty M^+.
\]

We can understand $S^{-\tau_M}$ in the unstable setting as follows. Let $M$ be embedded in $\mathbb{R}^m$, and let $\nu$ denote the normal bundle of $M$ in $\mathbb{R}^m$. Then, the fiberwise one-point compactification $S^\nu$ of the normal bundle represents $S^{-\tau_M}$, up to a degree shift by $m$. That is,

$$
S^{-\tau_M} \simeq S^{\nu - \varepsilon^m},
$$

where $\varepsilon^m$ denotes the trivial bundle over $M$ of rank $m$. For a parametrized spectrum $\mathcal{E}$ over $M$, we define $S^{-\tau_M}\wedge_M \mathcal{E}$, the smash product between two spectra $S^{-\tau_M}$ and $\mathcal{E}$ as follows. It has $n$-th space 

\[
(S^{-\tau_M}\wedge_M \mathcal{E})_n := \Omega^m_M(S^\nu \wedge_M \mathcal{E}_n),
\]

where $\Omega_M^m(S^\nu \wedge_M \mathcal{E}_n)$ is the fiberwise loop space, which consists of tuples $(m,\lambda)$ such that $m \in M$ and $\lambda: S^m \to p^{-1}(m)$ is a based map, where $p$ is the structure map of $S^\nu \wedge_M \mathcal{E}_n$. Note that $(\Sigma_M,\Omega_M)$ is an adjoint functor pair in the category $T(M;G)$.

Now, we will use the special case of \cite[Theorem 4.9]{HU}, which states that for any $G$-spectra $\mathcal{E}$, we have an equivalence of $G$-spectra:
\begin{align*}
    h_\# (S^{-\tau_M} \wedge_M \mathcal{E} ) \simeq h_* \mathcal{E},
\end{align*}

where $h_\#$ and $h_*$ are covariant functors between the categories of parametrized $G$-spectra over $M$ and a single point $*$, which are defined level-wise of spaces.

Now, let $\mathcal{E}$ be a parametrized spectrum over $M$, which has $n$-th space 

\[
h^*(S^n) \wedge_M S^{\tau_M} \wedge_M (\Loop_f M)^+.
\]

Then, by applying \cite[Theorem 4.9]{HU}, we obtain that $h_*(\mathcal{E})$ is weak equivalent to $h_\#(S^{-\tau_M} \wedge_M \mathcal{E)}$, which has $n$-th space

\[
h_\#(\Omega_M^m (S^{\nu} \wedge_M h^*(S^n) \wedge_M S^{\tau_M} \wedge_M (\Loop_f M)^+)).
\]
By unrevealing the definitions, and using the identity $S^\nu\wedge_MS^{\tau_M} \cong S^{\nu \oplus \tau_M}$ and \cite[Lemma 4.7]{HU}, we obtain that $n$-th space of $h_\#(S^{-\tau_M} \wedge_M \mathcal{E)}$ is equivalent to the following.
\begin{align*}
& h_\#(\Omega_M^m (S^{v\oplus\tau_M}\wedge_Mh_*(S^n)\wedge_M(\Loop_f M)^+))\\
&\cong h_\#(\Omega_M^m ((S^{m}\times M)\wedge_Mh_*(S^n)\wedge_M(\Loop_f M)^+))\\
&\cong h_\#(\Omega_M^m \Sigma_M^m(h_*(S^n)\wedge_M(\Loop_f M)^+))\\
&\cong \Omega^m \Sigma^m (S^n \wedge h_\# ((\Loop_f M)^+))\\
&\cong \Omega^m \Sigma^m (S^n \wedge (\Loop_f M)_+).
\end{align*}
As a result, we obtain that
\begin{align*}
H^0_G(M;\Sigma_M^\infty S_M \Gamma_f^*E) & \cong \colim_{n\to \infty}[S^{n+m},  S^{n+m} \wedge (\Loop_f M)_+]_{R(*;G)}\\
& = \pi^{G,st}_0((\Loop_f M)_+)\\
& \cong  \Omega_0^{G,\fr}(\Loop_f M).
\end{align*}\end{proof}

\section{Conjecture of the Klein-Williams}\label{sec4}

In this section, we present our results that affirmatively answer Conjecture \ref{conj}. We begin by decomposing the equivariant framed bordism group into non-equivariant framed bordism groups using tom Dieck splitting (see Remark \ref{tomDieck}).  This decomposition enables us to study the invariant $\ell_G(f)$ componentwise, where each component lies in the group 
\[
\Omega_0^{\fr} (EWH \times_{WH} \Loop_f M^H)
\]

where $WH$ denotes the Weyl group. The zeroth framed bordism groups of a space are well-known and correspond to the free abelian group generated by its path components. Furthermore, leveraging Lemma \ref{borel}, which provides a key property of the Borel construction, one can conclude the following:

\begin{align*}
\Omega_0^{G,\fr}(\Loop_f M) & \cong \bigoplus_{(H)} \Z [\pi_0(EWH \times_{WH} \Loop_f M^H)]\\
&\cong \bigoplus_{(H)} \Z [\pi_0(\Loop_f M^H)/WH]\\
& \cong \bigoplus_{(H)} \Z [\pi_0(\Loop_f M^H)]/ WH\\
& \cong \bigoplus_{(H)} \Omega_0^{\fr} (\Loop_f M^H)/ WH 
\numberthis \label{tomDsplitting}
\end{align*}

\begin{lemma}{\label{borel}}
There is a one-to-one correspondence between path components of Borel construction $EG \times_{G} X$ and orbit space of path components of $G$-space $X$.
\end{lemma}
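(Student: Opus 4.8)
The statement to prove is Lemma~\ref{borel}: a bijection between $\pi_0(EG\times_G X)$ and $\pi_0(X)/G$.

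\medskip

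The plan is to produce a continuous surjection $q\colon X \to EG\times_G X$ (the composite $X \cong \{e_0\}\times X \hookrightarrow EG\times X \twoheadrightarrow EG\times_G X$ for a chosen basepoint $e_0\in EG$, or more cleanly just the restriction of the quotient map $EG\times X \to EG\times_G X$ along the inclusion of a slice), and analyze the induced map $\pi_0(q)\colon \pi_0(X)\to \pi_0(EG\times_G X)$. The first step is surjectivity of $\pi_0(q)$: since $EG$ is path-connected, any point $[e,x]\in EG\times_G X$ can be joined by a path (coming from a path in $EG$ from $e$ to $e_0$, kept constant in the $X$-coordinate) to a point in the image of the slice, so every path component of $EG\times_G X$ meets the image of $X$. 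The second step is to identify the fibers of $\pi_0(q)$: I must show that two points $x,y\in X$ map into the same path component of $EG\times_G X$ if and only if they lie in the same $G$-orbit of path components of $X$, i.e. there is $g\in G$ with $gx$ and $y$ in the same component of $X$. The ``if'' direction is immediate because $[e_0,x]=[g^{-1}e_0, x]$ is joined to $[e_0,gx]$ via a path in $EG$ (again $EG$ path-connected), and a path from $gx$ to $y$ in $X$ gives a path between the images.

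\medskip

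The harder ``only if'' direction is the main obstacle. Given a path $\gamma\colon[0,1]\to EG\times_G X$ from $[e_0,x]$ to $[e_0,y]$, I want to lift it to the total space $EG\times X$. Here I would invoke that $EG\times X \to EG\times_G X$ is a principal $G$-bundle (when $G$ acts freely and with suitable local triviality — for a topological group $G$ acting freely on $EG$ this is standard; in the paper's setting $G$ is finite, so $EG\times X \to EG\times_G X$ is a covering map, which makes lifting completely elementary). Since $[0,1]$ is simply connected (for finite $G$: apply the path-lifting property of covering spaces), the path $\gamma$ lifts to a path $\tilde\gamma\colon[0,1]\to EG\times X$ starting at $(e_0,x)$; its endpoint is $g\cdot(e_0,y)=(ge_0, gy)$ for some $g\in G$. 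Writing $\tilde\gamma=(\alpha,\beta)$ with $\alpha$ a path in $EG$ and $\beta$ a path in $X$ from $x$ to $gy$, we conclude $x$ and $gy$ lie in the same path component of $X$, hence $g^{-1}x$ and $y$ do, which is exactly what we need. Thus $\pi_0(q)$ descends to a well-defined bijection $\pi_0(X)/G \xrightarrow{\;\cong\;} \pi_0(EG\times_G X)$.

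\medskip

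Two small technical points to address along the way: first, one should make explicit that the $G$-action on $\pi_0(X)$ is the one induced functorially by the $G$-action on $X$, so that ``$G$-orbit of path components'' is literally $\pi_0(X)/G$; this just uses that each $g\colon X\to X$ is a homeomorphism and hence induces a bijection on $\pi_0$. Second, if one prefers to avoid covering-space language (or if $G$ is not assumed finite elsewhere), the same argument goes through using that $EG\times X\to EG\times_G X$ is a $G$-fibration / numerable principal bundle and homotopy lifting applies; but since the paper only ever uses finite $G$, the covering-space version is the cleanest and I would present that, remarking that $|G|<\infty$ makes $EG \times X \to EG\times_G X$ a $|G|$-fold covering. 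The upshot is a short, self-contained proof whose only real input is path-lifting for covering maps and path-connectedness of $EG$.
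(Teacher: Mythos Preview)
Your proposal is correct and, in fact, more careful than the paper's own argument. The paper defines the map in the opposite direction, sending $[(e,x)]\in\pi_0(EG\times_G X)$ to $[x]\in\pi_0(X)/G$, and checks well-definedness, injectivity, and surjectivity directly. Its well-definedness step asserts that if $(e,x)$ and $(e',x')$ lie in the same path component of the Borel construction then ``clearly'' $x$ and $x'$ lie in the same component of $X$; this is exactly the place where a lifting argument is needed, and the paper does not supply one. Your explicit use of the covering map $EG\times X\to EG\times_G X$ (valid since $G$ is finite) and path-lifting fills precisely that gap, so your route is the more rigorous of the two even though the underlying ideas are the same.

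One small slip: in your ``if'' direction you write $[e_0,x]=[g^{-1}e_0,x]$, which is not generally true. What you want is $[e_0,gx]=[g^{-1}e_0,x]$ (apply $g^{-1}$ to $(e_0,gx)$), and then a path in $EG$ from $g^{-1}e_0$ to $e_0$ connects this to $[e_0,x]$. With that correction the argument goes through exactly as you describe.
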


\begin{proof}
We will consider the canonical map between these sets and we will show this map is one-to-one.
\begin{align*}
    \pi_0(EG \times_G X) &\rightarrow \pi_0 (X)/G\\
    [(e,x)] &\mapsto [x]
\end{align*}
Let two elements $(e,x)$ and $(e',x')$ belong to the Borel construction, and assume they are in the same path component. Clearly, this implies that $x$ and $x'$ lie in the same component of $X$, this implies that the map is well-defined.

Now, suppose that $[x] = [x']$ in $\pi_0(X)/G$, but $x$ and $x'$ are not in the same component of $X$. Then there exists some $g \in G$ such that $g \cdot x' = x$. It suffices to show that $[(e,x)] = [(e',g \cdot x)]$ for all $e, e' \in EG$.

To establish this, consider $e' = g \cdot f$ for some $f \in EG$. Then, $[(e',g \cdot x)] = [(f,x)]$. Since $EG$ is connected, there exists a path in $EG$ between any two points, Therefore, $[(e,x)] = [(f,x)]$ in $\pi_0(EG\times_GX)$. Thus, $[(e,x)] =[(e',g \cdot x)]$. This proves injectivity. Surjectivity follows directly from the definition of the quotient.
\end{proof}

The following result provides the decomposition of the Klein-Williams invariant $\ell_G(f)$. Since $\ell_G(f)$ is defined as the image of $[s] \in H^0_G(M;\Sigma_M^\infty S_M \Gamma_f^*E)$, we first establish the decomposition of $H^0_G(M;\Sigma_M^\infty S_M \Gamma_f^*E)$ below.  

\begin{theorem} \label{splittingresult}  
There exists a splitting of the zeroth equivariant cohomology of $M$ with coefficients in a fiberwise reduced suspension:  
\begin{align*}  
H^0_G(M;\Sigma_M^\infty S_M \Gamma_f^*E) & \xrightarrow{\cong} \bigoplus_{(H)} H^0(M^H;\Sigma_{M^H}^\infty S_{M^H} \Gamma_{f^H}^*E^H)/WH\\  
[s] & \mapsto \bigoplus_{(H)} \overline{[s^H]}.  
\end{align*}  
Consequently, the invariant $\ell_G(f)$ decomposes as follows under the tom Dieck splitting:  
\begin{align*}  
\Omega_0^{G,\fr}(\Loop_f M) & \xrightarrow{\cong} \bigoplus_{(H)} \Omega_0^{\fr} (\Loop_f M^H)/ WH \\  
\ell_G(f) & \mapsto \bigoplus_{(H)} \overline{\ell(f^H)},  
\end{align*}  
where each $\ell(f^H)$ represents the obstruction to the fixed point problem in the non-equivariant case, and $\overline{\ell(f^H)}$ is the induced element under the quotient by the Weyl group $WH$.  
\end{theorem}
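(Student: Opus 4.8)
The plan is to prove the first isomorphism — the splitting of $H^0_G(M;\Sigma_M^\infty S_M\Gamma_f^* E)$ together with the explicit formula $[s]\mapsto\bigoplus_{(H)}\overline{[s^H]}$ — and then deduce the statement about $\ell_G(f)$ from it by functoriality of the Pontryagin--Thom isomorphism of Theorem \ref{iso}. The starting observation is that the entire construction of the coefficient spectrum and of the section $s$ commutes with the fixed-point functor $(-)^H$. Indeed, since $G$ is finite, $M^H$ is a closed smooth submanifold; $(M\times M-\Delta)^H=M^H\times M^H-\Delta_{M^H}$; the graph map satisfies $(\Gamma_f)^H=\Gamma_{f^H}$; the fibrant replacements are built from path spaces, for which $(Y^I)^H=(Y^H)^I$; and the unreduced fiberwise suspension and $(-)^+$ are gluing constructions. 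Hence $(\Gamma_f^* E)^H=\Gamma_{f^H}^* E^H$, $(S_M\Gamma_f^* E)^H=S_{M^H}\Gamma_{f^H}^* E^H$, and the two canonical cone-point sections $s_\pm$ restrict to the corresponding ones for $M^H$, so that $s^H$ is precisely the obstruction section of the non-equivariant fixed-point problem for $f^H$. Since $(-)^H$ also commutes with $\Sigma_M$, restriction to $H$-fixed points is well defined on the colimit and yields a homomorphism $\rho_H\colon H^0_G(M;\Sigma_M^\infty S_M\Gamma_f^* E)\to H^0(M^H;\Sigma_{M^H}^\infty S_{M^H}\Gamma_{f^H}^* E^H)$ with $\rho_H([s])=[s^H]$. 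Because $s$ is $G$-equivariant, $s^H$ is $WH$-equivariant, so $\rho_H([s])$ is invariant under the residual $WH$-action; composing with the projection onto the $WH$-coinvariants gives $\bar\rho_H$, and we set $\rho:=\bigoplus_{(H)}\bar\rho_H$.

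The core of the argument is to identify $\rho$ with the tom Dieck splitting. Transporting along the Pontryagin--Thom isomorphism of Theorem \ref{iso} — applied to $M$ and, in its non-equivariant form, to each $M^H$ — we have $[s]\leftrightarrow\ell_G(f)$ and $[s^H]\leftrightarrow\ell(f^H)\in\Omega_0^{\fr}(\Loop_f M^H)$, where $\ell(f^H)$ is the non-equivariant Klein--Williams invariant of $f^H$; moreover this isomorphism for $M^H$ is $WH$-equivariant by functoriality and descends to $WH$-coinvariants. Thus $\rho$ becomes a homomorphism $\Omega_0^{G,\fr}(\Loop_f M)\to\bigoplus_{(H)}\Omega_0^{\fr}(\Loop_f M^H)/WH$ sending $\ell_G(f)$ to $\bigoplus_{(H)}\overline{\ell(f^H)}$, and it remains to see that this agrees with the composite of the tom Dieck splitting of Remark \ref{tomDieck} and the chain of isomorphisms \eqref{tomDsplitting} (the latter using Lemma \ref{borel} to replace $EWH\times_{WH}(-)$ by $(-)/WH$ on $\pi_0$). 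For this I would use the geometric model underlying Theorem \ref{iso}: $\ell_G(f)$ is represented by the $G$-framed $0$-manifold $\Fix(f')$ for a $G$-transverse self-map $f'\simeq_G f$, mapped into $\Loop_f M$ by the canonical fixed-point paths and framed by the local fixed-point indices. On such a representative the $(H)$-projection of the tom Dieck splitting is given by passing to the fixed submanifold $\Fix(f')^H=\Fix(f'{}^H)$, with its induced framing and its map to $(\Loop_f M)^H=\Loop_f M^H$, and then to its class modulo $WH$; since $f'{}^H$ is a transverse representative of $f^H$, that class is exactly $\overline{\ell(f^H)}$. Hence $\rho$ coincides with an isomorphism, which proves the first displayed statement, and the second follows by conjugating with the Pontryagin--Thom isomorphisms, under which $[s]\leftrightarrow\ell_G(f)$ and $\overline{[s^H]}\leftrightarrow\overline{\ell(f^H)}$.

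The main obstacle is precisely this identification: matching the homotopy-theoretic $(H)$-projection of the tom Dieck splitting with the concrete operation ``take the $H$-fixed submanifold, then pass to $WH$-coinvariants'' on framed-manifold representatives, and checking that it coincides with $\rho_H$ after Pontryagin--Thom. The points requiring care are: (i) the geometric model for $\ell_G(f)$ as the fixed-point data of a $G$-transverse approximation, which comes from the equivariant Pontryagin--Thom description in \cite{KW2}; (ii) the reconciliation of the homotopy orbits $EWH\times_{WH}(-)$ in Remark \ref{tomDieck} with the strict orbits $(-)/WH$, legitimate here only because we work in degree $0$, where $\Omega_0^{\fr}$ depends only on $\pi_0$ and Lemma \ref{borel} applies; and (iii) the existence of a $G$-transverse approximation $f'$ of $f$ — which follows from equivariant transversality on the closed smooth $G$-manifold $M$ and which, being a fixed-point-wise condition, automatically makes each $f'{}^H$ a transverse representative of $f^H$, so that no use of the dimension hypotheses of Theorem \ref{KWfix} is needed here. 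An alternative, more computational route is to unwind the explicit isomorphism from the proof of Theorem \ref{iso} down to $\pi_0^{G,st}((\Loop_f M)_+)$, apply the standard tom Dieck splitting there, and track $[s]$ through each adjunction; this meets the same essential point.
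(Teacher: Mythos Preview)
Your proposal is correct and follows the same overall strategy as the paper: transport the tom Dieck splitting of $\Omega_0^{G,\fr}(\Loop_f M)$ through the Pontryagin--Thom isomorphism of Theorem~\ref{iso}, identify each summand with the non-equivariant cohomology group for $f^H$, and track $[s]$ and $\ell_G(f)$ through the resulting commutative square.

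The main difference is one of emphasis and detail. The paper's proof is brief: it declares the splitting of $H^0_G$ ``straightforward'', invokes Lemma~\ref{secst} and Theorem~\ref{KWindex} to get $[s^H]\leftrightarrow\ell(f^H)$ on each summand, and then simply chases $\ell_G(f)$ around the square. What the paper does \emph{not} spell out is precisely the point you flag as the main obstacle --- why the abstractly defined $(H)$-projection of the tom Dieck splitting agrees with the concrete ``restrict to $H$-fixed points, then pass to $WH$-coinvariants'' operation on the section $s$. You handle this by (a) checking that $(-)^H$ commutes with every ingredient of the construction (path spaces, pullbacks, $S_M$, $\Sigma_M$), so that $s^H$ is literally the non-equivariant obstruction section for $f^H$, and (b) verifying on framed-manifold representatives that the tom Dieck projection is geometrically ``take $H$-fixed points''. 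This is more work than the paper does, but it is exactly what is needed to justify the commutativity the paper asserts. Your alternative route --- unwinding the explicit chain of adjunctions in the proof of Theorem~\ref{iso} down to $\pi_0^{G,st}$ and applying the standard tom Dieck splitting there --- is closer in spirit to how the paper organizes things, and would also succeed.
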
  

The decomposition of $H^0_G(M;\Sigma_M^\infty S_M \Gamma_f^*E)$ is straightforward; we only need to determine the image of $\ell_G(f)$. To this end, we will use the following results.  

\begin{theorem}{\cite{KW1}}\label{KWindex}
There exists a bijection
\begin{align*}
\pi_0 ( \sec^{st} (S_M \Gamma_f^*E)) & \xrightarrow{\cong} \Omega_0^{\fr}(\Loop_f M)\\
[s_+] & \mapsto [\Gamma_f \pitchfork i_\Delta]
\end{align*}
where $\text{sec}^{st} (S_M \Gamma_f^*E)$ is the space of sections of $\Omega_M^\infty \Sigma_M^\infty S_M \Gamma_f^*E$ over $M$.   
\end{theorem}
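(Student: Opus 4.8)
\textbf{Proof proposal for Theorem \ref{KWindex}.}

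The plan is to identify both sides of the claimed bijection with a common geometric object, namely the set of bordism classes of framed submanifolds arising from making $\Gamma_f$ transverse to the diagonal, and to check that the Pontryagin--Thom construction furnishes the bijection. First I would recall the setup from Section \ref{sec3}: the fibration $E \to M \times M$ is a model for $M \times M - \Delta$, and by the pushout argument of Lemma \ref{w.e.} one has $S_M \Gamma_f^* E \simeq S^{\tau_M} \wedge_M (\Loop_f M)^+$. Stabilizing fiberwise, $\Omega_M^\infty \Sigma_M^\infty S_M \Gamma_f^* E$ is the fiberwise infinite loop space of the parametrized suspension spectrum of $S^{\tau_M} \wedge_M (\Loop_f M)^+$ over $M$. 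A section of this fiberwise infinite loop space is, unwinding the definitions, exactly a stable fiberwise map $M_+ \to S^{\tau_M} \wedge_M (\Loop_f M)^+$ over $M$; by fiberwise Spanier--Whitehead duality (the $S^{-\tau_M}$ formalism already used in the proof of Theorem \ref{iso}), this is adjoint to a stable map $S^0 \to M^{-\tau_M} \wedge (\Loop_f M)_+$ of genuine (non-equivariant) spectra, i.e.\ an element of $\pi_0^{st}(M^{-\tau_M} \wedge (\Loop_f M)_+) = H_0^{st}(M; (\Loop_f M)_+; S^{-\tau_M})$. By Atiyah duality this last group is $\Omega_0^{\fr}(\Loop_f M)$ via the Pontryagin--Thom isomorphism, which establishes the bijection $\pi_0(\sec^{st}(S_M \Gamma_f^* E)) \cong \Omega_0^{\fr}(\Loop_f M)$ abstractly.

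The substance of the theorem, however, is the \emph{explicit} formula $[s_+] \mapsto [\Gamma_f \pitchfork i_\Delta]$, so the second, more geometric, half of the argument is needed. Here I would run the Pontryagin--Thom construction directly on the unstable representative of $s_+$. Starting from a map $f$, choose an equivariant tubular neighborhood $D(\nu)$ of $\Delta$ in $M \times M$ with $\nu$ the normal bundle; after a small homotopy $\Gamma_f$ may be taken transverse to $\Delta$, so that $C := \Gamma_f^{-1}(\Delta) = \Fix(f)$ (after deformation, a finite disjoint union of framed submanifolds of $M$, generically points) sits inside $M$ with normal data identified with $\Gamma_f^* \nu \cong \tau_M|_C$ via the transversality. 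The collapse map $M \to M/(M - D(\Gamma_f^*\nu)) \cong \Thom(\Gamma_f^*\nu)$ composed with the canonical map recording, for each point of $C$, the returning path $t \mapsto \Gamma_f(\gamma(t))$ in $M \times M$ lifted to $E$ — equivalently the twisted loop in $\Loop_f M$ — is precisely a representative of $s_+$ under the identification $S_M \Gamma_f^* E \simeq S^{\tau_M}\wedge_M (\Loop_f M)^+$. Tracking this through the chain of adjunctions above shows that the class of $s_+$ in $\Omega_0^{\fr}(\Loop_f M)$ is represented by the framed manifold $\Fix(f)$ together with the map to $\Loop_f M$ sending a fixed point to its (constant-ish, twisted) path and the canonical framing coming from the transversality isomorphism $\nu(\Fix(f) \subset M) \cong \tau_M$; this is by definition $[\Gamma_f \pitchfork i_\Delta]$.

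Finally I would verify well-definedness and bijectivity at the level of $\pi_0$: two homotopic sections give homotopic (hence bordant) transverse intersections — a homotopy $\Gamma_{f_t}$ made transverse to $\Delta \times [0,1]$ produces a framed bordism in $\Loop_f M$ between the two fixed-point data — so the map descends to $\pi_0(\sec^{st})$; conversely a framed bordism class in $\Loop_f M$ can be compactly supported inside a tubular neighborhood and collapsed to produce a section, giving surjectivity, and the abstract duality computation of the first paragraph pins down injectivity. I expect the main obstacle to be the careful bookkeeping of framings and orientations through the fiberwise Spanier--Whitehead duality — in particular checking that the framing on $\Fix(f)$ induced by the transversality isomorphism $\nu \cong \tau_M$ matches the stable framing implicit in the Pontryagin--Thom construction for $\Omega_0^{\fr}$, up to the unavoidable shift by the ambient dimension $m$ that already appeared in the proof of Theorem \ref{iso} — rather than the topology, which is essentially Atiyah duality plus transversality. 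Since Theorem \ref{KWindex} is attributed to \cite{KW1}, in the write-up I would likely state this as a recollection with the geometric interpretation emphasized, rather than reprove Atiyah duality from scratch.
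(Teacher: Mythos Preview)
The paper does not give its own proof of Theorem~\ref{KWindex}: the statement is quoted from \cite{KW1} and used as a black box, with only the sentence following it explaining the notation $[\Gamma_f \pitchfork i_\Delta]$ in the sense of Hatcher--Quinn. So there is nothing to compare your argument against in this paper; the actual proof lives in the cited reference. You evidently anticipated this in your final paragraph, and that instinct is correct --- the appropriate treatment here is exactly what you suggest: state the result as a recollection from \cite{KW1} and unpack the geometric meaning of $[\Gamma_f \pitchfork i_\Delta]$ enough to use it downstream (as the paper does in Corollary~\ref{ell(f)}).

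That said, your sketch is a faithful outline of the Klein--Williams argument: the abstract bijection comes from the same fiberwise duality/adjunction chain already rehearsed in the proof of Theorem~\ref{iso}, and the explicit identification of $[s_+]$ with the transverse-intersection class is the Pontryagin--Thom collapse onto a tubular neighborhood of $\Fix(f)$, with the framing and the map to $\Loop_f M$ read off from transversality. One small point of care: in the degree-zero case relevant here, $\Fix(f)$ is (after a generic perturbation) a finite set of points, and the ``map to $\Loop_f M$'' really does send each fixed point $x$ to the constant path at $x$ --- the paper uses exactly this in the paragraph after the theorem and again in the proof of Corollary~\ref{ell(f)}. Your phrase ``constant-ish, twisted'' path is slightly imprecise; for the application it matters that the representative is literally the constant path $c_x$, since this is what identifies path-components of $\Loop_f M$ with fixed-point classes.
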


The notation $[\Gamma_f \pitchfork i_\Delta]$ defined by Hatcher and Quinn \cite{HatcherQuinn}, and it carries the information that $D:=\Gamma_f^{-1}(\Delta)$ equipped with map $D \rightarrow \Loop_fM$, given by $x \mapsto const_x$. Note that it is provided that $\Gamma_f$ and $i_\Delta$ are intersecting transversally. It is clear that when $f$ is identity, the Hatcher-Quinn invariant gives the same information with Euler characteristic of $M$ since it is just the self-intersection number of $M$ (It actually counts oriented intersection of the image of $\Gamma_f$ with $\Delta$ in $M \times M$).

\begin{remark}
    The class $[s_+]$ defines an obstruction theory for the fixed point problem in the non-equivariant case. Therefore, we can identify $[s_+]$ with $\ell(f)$. For further details, see \cite{KW1}.
\end{remark}

The following result shows that $\pi_0 ( \sec^{st} (S_M \Gamma_f^*E))$ is equivalent to the (non-equivariant) cohomology group $H^0(M;\Sigma_M^\infty S_M \Gamma_f^*E)$.

\begin{lemma} \label{secst}
There exits an isomorphism between abelian groups:
\begin{align*}
\pi_0 (\sec^{st} (S_M \Gamma_f^*E)) \xrightarrow{\cong} H^0(M;\Sigma_M^\infty S_M \Gamma_f^*E)
\end{align*}
Moreover, $[s_+]$ maps to $[s]$ under this isomorphism.
\end{lemma}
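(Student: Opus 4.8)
The plan is to unwind both sides of the claimed isomorphism to the same colimit of equivariant-free homotopy classes, and then to track the class $[s_+]$ through that identification.

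First I would recall the definition of $\sec^{st}(S_M\Gamma_f^*E)$ as the section space of the fiberwise stabilization $\Omega_M^\infty\Sigma_M^\infty S_M\Gamma_f^*E$ over $M$. Unwinding the fiberwise $\Omega_M^\infty\Sigma_M^\infty$, a point of this section space is compatibly a family of maps $S^n \to \Sigma_M^n(S_M\Gamma_f^*E)_b$ over the fibers, so that $\pi_0$ of the section space is naturally $\colim_{n\to\infty}[S^n, \sec(\Sigma_M^n S_M\Gamma_f^*E)]_{R(*;G)}$ — here there is no $G$-action to speak of in the non-equivariant case, so this is just the underlying colimit. On the other hand, by the very same adjunction computation carried out at the start of the proof of Theorem~\ref{iso} — using Proposition~\ref{redsus} to identify $\Sigma_M^n M^+$ with $S^n\times M$, and the adjunction $(h^*, h_*)$ with $h^*(Y) = Y\times M$, $h_*(X) = \sec(X)$ — one has
\begin{align*}
H^0(M;\Sigma_M^\infty S_M\Gamma_f^*E) &:= \colim_{n\to\infty}[\Sigma_M^n M^+, \Sigma_M^n S_M\Gamma_f^*E]_{R(M;*)}\\
&= \colim_{n\to\infty}[S^n\times M, \Sigma_M^n S_M\Gamma_f^*E]_{R(M;*)}\\
&\cong \colim_{n\to\infty}[S^n, \sec(\Sigma_M^n S_M\Gamma_f^*E)]_{R(*;*)}.
\end{align*}
So both groups are literally the same colimit; the isomorphism is the identity on representatives once one identifies the two descriptions of the colimit, and it is a map of abelian groups because both are computed as stable homotopy classes with the usual track-addition. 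I would spell out that the structure maps of the two colimit systems agree (the suspension map $[S^n,-] \to [S^{n+1},\Sigma-]$ on the section side corresponds to applying $\Sigma_M$ and reindexing on the cohomology side, using Proposition~\ref{redsus} to commute $\Sigma_M$ past $h^*(S^n)$).

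For the ``moreover'' clause, I would trace $[s_+]$ explicitly. The map $s = s_-\sqcup s_+\colon M^+ \to S_M\Gamma_f^*E$ representing $[s]$ restricts on the $s_+$ component to a section-flavored datum; under the adjunction $(h^*,h_*)$ the adjoint of a map $S^n\times M \to \Sigma_M^n S_M\Gamma_f^*E$ over $M$ is exactly a map $S^n \to \sec(\Sigma_M^n S_M\Gamma_f^*E)$, and stabilizing $s_+$ this way produces precisely the section-spectrum class that Klein--Williams call $[s_+]$. So the identification sends $[s]\mapsto [s_+]$ essentially by definition of the adjunction isomorphism, and I would just check that the $s_-$ part (the zero/basepoint section) is carried to the basepoint and hence contributes nothing.

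I expect the main obstacle to be purely bookkeeping rather than conceptual: carefully matching the indexing of the two colimit systems and verifying that the adjunction isomorphisms are compatible with the suspension structure maps, so that passing to the colimit is legitimate. A secondary technical point is to confirm that $\sec^{st}$ (fiberwise stable sections) really does compute $\colim_n \pi_0\sec(\Sigma_M^n(-))$ rather than some a priori different group — this uses that $\Omega_M^\infty\Sigma_M^\infty$ is the fiberwise stabilization and that $\pi_0$ commutes with the relevant filtered colimit of section spaces, which holds because $M$ is compact so the spheres $S^n$ are compact and maps out of them commute with the colimit. Once those are in place the statement follows, and the compatibility with the abelian group structures is automatic since both sides are $\pi_0$ of an infinite-loop-like space.
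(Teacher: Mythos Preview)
Your proposal is correct and follows essentially the same approach as the paper: both arguments reduce the two sides to the same colimit of homotopy classes via the adjunction $(h^*,h_*)$ between $R(*;e)$ and $R(M;e)$, and both dispose of the ``moreover'' clause by observing that $s_-$ represents the basepoint. The only cosmetic difference is the order of the adjunctions: the paper starts from $\pi_0(\sec^{st}) := \colim_n[S^0, h_*(\Omega_M^n\Sigma_M^n S_M\Gamma_f^*E)]$, applies $(h^*,h_*)$ to land in $[M^+,\Omega_M^n\Sigma_M^n(-)]_{R(M;e)}$, and then uses the $(\Sigma_M,\Omega_M)$ adjunction to reach $[\Sigma_M^n M^+,\Sigma_M^n(-)]_{R(M;e)} = H^0$; you instead start from $H^0$, use Proposition~\ref{redsus} to write $\Sigma_M^n M^+ = h^*(S^n)$, and apply $(h^*,h_*)$ to reach $[S^n,\sec(\Sigma_M^n(-))]$. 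These are the same computation read in opposite directions, and your extra care with the colimit compatibility and the compactness argument for commuting $\pi_0$ past the stabilization is a welcome elaboration the paper leaves implicit.
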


\begin{proof}
We will use the duality of the functors $h_*$ and $h^*$, which is explained in the Proof of Theorem \ref{iso}.
\begin{align*}
\pi_0 (\sec^{st} (S_M \Gamma_f^*E)) & := \colim_{n \to \infty}[S^0, h_*(\Omega_M^n \Sigma_M^n S_M \Gamma_f^*E)]_{R(*,e)}\\
& \cong \colim_{n \mapsto \infty}[h^*S^0, \Omega_M^n \Sigma_M^n S_M \Gamma_f^*E]_{R(M;e)}\\
& \cong \colim_{n\to \infty}[M^+,\Omega_M^n \Sigma_M^n S_M \Gamma_f^*E]_{R(M;e)}\\
& \cong \colim_{n\to \infty}[\Sigma^n_M M^+,\Sigma_M^n S_M \Gamma_f^*E]_{R(M;e)}\\
& = H^0(M;\Sigma_M^\infty S_M \Gamma_f^*E)
\end{align*}

Furthermore, $[s_+]$ maps to $[s]$ in $H^0(M;\Sigma_M^\infty S_M \Gamma_f^*E)$, by the definitions of classes since $[s_-]$ is chosen to be based point for $\pi_0 (\sec^{st} (S_M \Gamma_f^*E))$.  
\end{proof}

Now, we present a proof of Theorem \ref{splittingresult}.

\begin{proof}[Proof of Theorem \ref{splittingresult}]
As a result of Lemma \ref{secst}, we obtain the following isomorphism.
\begin{align*}
    H^0(M^H;\Sigma_{M^H}^\infty S_{M^H} \Gamma_{f^H}^*E^H) & \xrightarrow{\cong} \Omega_0^{\fr} (\Loop_f M^H)\\
    [s^H] & \mapsto \ell(f^H)
\end{align*}
This yields the desired decompositions, and the image of $\ell_G(f)$ under this decomposition can be described as follows using the isomorphisms in the square below: $\ell_G(f)$ maps to $[s]$ via the inverse of the left vertical map. Then, the upper horizontal map sends $[s]$ to $\oplus_{(H)} \overline{[s^H]}$, which, by the above isomorphism, corresponds to $\oplus_{(H)} \overline{\ell(f^H)}$.

\[
\begin{tikzcd}
\& H^0_G(M;\Sigma_M^\infty S_M \Gamma_f^*E) \arrow[r] \arrow[d]
\& \bigoplus_{(H)} H^0(M^H;\Sigma_{M^H}^\infty S_{M^H} \Gamma_{f^H}^*E^H)/WH \arrow[d]\\
\& \Omega_0^{G,\fr}(\Loop_f M) \arrow[r] \& \bigoplus_{(H)} \Omega_0^{\fr} (\Loop_f M^H)/ WH
\end{tikzcd}
\]
\end{proof}

The Klein-Williams invariant $\ell_G(f)$ projects onto the invariant $\overline{\ell(f^H)}$, and the relationship between the invariant $\ell(f^H)$ and the Reidemeister trace $R(f^H)$ is well understood in the non-equivariant case. Ponto \cite{PontoBook} established this identification by introducing a generalization of the trace in symmetric monoidal categories to a trace in bicategories. In contrast, we provide a proof specifically for closed, compact, smooth manifolds using traditional methods, as outlined in \cite[Remark 10.2.(1)]{KW1}.  

To ensure a common understanding, we first recall the geometric definition of the Reidemeister trace. The algebraic Reidemeister trace can be found in \cite{husseini82, geoghegan-handgeotop}, which also contain proofs that the algebraic and geometric definitions coincide.

The geometric Reidemeister trace $R(f)$ is defined by the fixed point indices of fixed points of $f$. Since a fixed point index of a self-map $f$ on a manifold $M$ is defined locally, it suffices to consider maps of the form $f\colon U \to \R^n$, where $U \subset \R^n$ is an open subset. Assume that the set of fixed points $\Fix(f)$ of $f$ is compact. Then, the \textit{fixed point index} $i(f,U)$ is defined via the induced map
$$
(\mathrm{id}-f)_*\colon H_n(U,U-\Fix(f))  \to H_n(\R^n, \R^n-\{0\}),
$$
by setting
$$
[S^n]_F  \mapsto (\mathrm{id}-f)_*[S^{n}]_F:=i(f,U) [S^{n}],
$$
where $[S^{n}]_F$ denotes the image of fundamental class $[S^{n}]$ under the map given by 

$$H_{n}(S^{n}) \to H_{n}(S^{n},S^{n} -\Fix(f)) \cong H_{n}(U,U-\Fix(f)).$$

For a detailed discussion on the fixed point index, we refer to \cite{jiangbook, brownfix, DOLD19651}. The geometric Reidemeister trace is defined in terms of the \textit{fixed point classes} of $f$, denoted by $\operatorname{Fix}^c(f)$. This is an equivalence relation on $\operatorname{Fix}(f)$, defined as follows: Two fixed points $x$ and $y$ belong to the same class if and only if there exists a path $\alpha$ from $x$ to $y$ such that $\alpha \simeq f(\alpha)$ relative to the endpoints.

Before defining the Reidemeister trace, we first note that it lies in the free abelian group generated by the set of twisted conjugacy classes of the fundamental group, which is associated with the self-map $f$ on $M$. The \textit{twisted conjugacy classes} of the fundamental group form the quotient set $\pi_1(M,*)$ under the following equivalence relation:  
\[
 \beta \sim  \alpha \beta \phi(\alpha)^{-1}, \: \text{for all } \alpha, \beta \in \pi_1(M,*),
\]

where $\phi$ is the map induced by $f$. Explicitly, $\phi$ is defined by the composition of the following maps:  
\[
\phi\colon \pi_1(M,*) \xrightarrow{f_\#} \pi_1(M,f(*)) \xrightarrow{\cong} \pi_1(M,*).
\]

The isomorphism is established by choosing a homotopy class of paths $[\gamma]$ from the base-point $*$ to $f(*)$. Thus, $\phi$ sends $\alpha$ to $\gamma f_\#(\alpha) \gamma^{-1}$. It is important to note that the construction of $\phi$ is independent of the choice of $\gamma$. We denote the set of fundamental group elements modulo this twisted conjugacy relation by $\pi_1(M,*)_f$.

Now, for each fixed point $x$ of $f$, choose a path $\zeta_x$ from the base-point $*$ to $x$. This allows us to define a well-defined injective map from the set of fixed point classes to the twisted conjugacy classes of the fundamental group:  
\begin{align*}
\operatorname{Fix}^c(f) & \to \pi_1(M,*)_f\\
[x] & \mapsto \zeta_x * f(\zeta_x)^{-1} * \gamma^{-1}
\end{align*}

\begin{definition}
Let $f: M \to M$ be a map on a compact manifold $M$. The \textit{Reidemeister trace}, $R(f)$, of $f$ is defined as the image of the class  
\[
\sum_{[x] \in \operatorname{Fix}^c(f)} i(f,[x]) [x]
\]
under the injection  
\[
\mathbb{Z} [\operatorname{Fix}^c(f)]  \to \mathbb{Z} [\pi_1(M,*)_f],
\]
where $i(f,[x])$ denotes the fixed point index $i(f,U)$ of $f$ in an open set $U$ that contains all fixed points in the class $[x]$.
\end{definition}

\begin{definition}
The \textit{Nielsen number}, $N(f)$, of $f$ is defined by the number of non-zero coefficients of $R(f)\in\Z[\pi_1(M,*)_f]$.
\end{definition}

It is well known that for a given closed, compact, and connected manifold $M$, there is a correspondence between the connected components of $\Loop_f M$ and the twisted conjugacy classes of the fundamental group $\pi_1(M,*)_f$. Using this identification, we derive the following consequence of Theorem \ref{KWindex}. 

\begin{corollary}\label{ell(f)}
Let $f:M \to M$ be a map on closed, compact, smooth manifold. Then, the invariant $\ell(f)$ which is defined on \cite{KW1}, is identified with the Reidemeister trace $R(f)$.
\begin{align*}
\Omega_0^{\fr}(\Loop_f M) & \xrightarrow{\cong}  \Z[\pi_1(M,*)_f]\\
\ell(f) & \mapsto R(f)
\end{align*}
\end{corollary}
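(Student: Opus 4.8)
The plan is to show that the bijection $\Omega_0^{\fr}(\Loop_f M) \xrightarrow{\cong} \Z[\pi_1(M,*)_f]$ carries the class $\ell(f)$ — which by Theorem~\ref{KWindex} and the remark following it is represented by the Hatcher–Quinn intersection datum $[\Gamma_f \pitchfork i_\Delta]$ — to the sum $\sum_{[x]} i(f,[x])[x]$ defining $R(f)$. First I would recall that $\Omega_0^{\fr}(\Loop_f M) \cong \Z[\pi_0(\Loop_f M)]$, since the zeroth framed bordism group of any space is the free abelian group on its path components (a framed $0$-manifold is a finite set of signed points, and bordism classes are detected by the signed count in each component). Then I would invoke the standard identification $\pi_0(\Loop_f M) \cong \pi_1(M,*)_f$ recalled in the text just before the statement: a twisted loop $\lambda$ with $f(\lambda(0))=\lambda(1)$ is sent, after concatenating with a fixed path $\zeta$ from $*$ to $\lambda(0)$ and the chosen path $\gamma$ from $*$ to $f(*)$, to the twisted conjugacy class of $\zeta * \lambda * (\gamma * f(\zeta))^{-1}$. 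Under this dictionary, what must be checked is purely local-to-global: the Hatcher–Quinn manifold $D = \Gamma_f^{-1}(\Delta)$, after a generic perturbation making $\Gamma_f \pitchfork i_\Delta$, is a framed $0$-manifold whose points are exactly (perturbed) fixed points of $f$, each equipped with the sign coming from the transverse intersection of $\Gamma_f$ with $\Delta$ in $M\times M$, and mapped to $\Loop_f M$ by $x \mapsto \mathrm{const}_x$.

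The key steps, in order, are: (1) perturb $f$ within its homotopy class to $f'$ with $\Gamma_{f'} \pitchfork \Delta$, so $\Fix(f')$ is finite and each fixed point is nondegenerate; this does not change either side since both are homotopy invariants. (2) Identify, at each $x \in \Fix(f')$, the local intersection sign of $\Gamma_{f'}$ with $\Delta$ at $(x,x)$ with the local fixed point index $i(f',x) = \pm 1$: near $x$ the intersection sign is the sign of $\det(\mathrm{id} - df'_x)$, which is exactly the Dold/Lefschetz local index of a nondegenerate fixed point. (3) Match the labelling in $\pi_0(\Loop_f M)$: the point $x$ maps to $\mathrm{const}_x$, whose class in $\pi_1(M,*)_f$ under the dictionary above is $\zeta_x * f(\zeta_x)^{-1} * \gamma^{-1}$ — precisely the twisted conjugacy class assigned to the fixed point class $[x]$ in the definition of $R(f)$. (4) Sum over $x$: grouping the points of $D$ by their image component recovers $\sum_{[x]} i(f,[x])[x]$, so the bijection of Theorem~\ref{KWindex} sends $[s_+] = \ell(f)$ to $R(f)$, and naturality/additivity show this identification is a group isomorphism $\Omega_0^{\fr}(\Loop_f M) \cong \Z[\pi_1(M,*)_f]$.

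I expect the main obstacle to be step~(2): carefully verifying that the framing/orientation conventions built into the Hatcher–Quinn invariant $[\Gamma_f \pitchfork i_\Delta]$ — i.e. the comparison of the normal data of $\Delta$ in $M\times M$ with the stable framing of $D$ — reproduce the sign $\mathrm{sign}\,\det(\mathrm{id}-df'_x)$ rather than its negative or some twist by $\dim M$. This is the place where one must pin down how the stable tangential framing of the $0$-manifold $D$ (coming from $\tau_D \cong \Gamma_f^*\nu_\Delta$ via transversality, together with the identification $\nu_\Delta \cong \tau_M$) translates into an element of $\Omega_0^{\fr} = \Z$; the sign bookkeeping here is the analogue of the classical comparison between the Lefschetz number and the self-intersection number of the diagonal, and I would handle it by a local model computation in a Euclidean chart, exactly as indicated in \cite[Remark 10.2.(1)]{KW1}. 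The remaining steps are essentially bookkeeping: step~(1) is transversality, step~(3) is unwinding the two definitions of the bijection $\pi_0(\Loop_f M)\cong\pi_1(M,*)_f$, and step~(4) is additivity of framed bordism over path components.
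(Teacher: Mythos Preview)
Your proposal is correct and follows essentially the same route as the paper: identify $\ell(f)$ with the Hatcher--Quinn class $[\Gamma_f \pitchfork i_\Delta]$ via Theorem~\ref{KWindex}, pass through $\Omega_0^{\fr}(\Loop_f M)\cong \Z[\pi_0(\Loop_f M)]$, recognize the signed intersection numbers of $\Gamma_f$ with $\Delta$ as fixed point indices and the component labels $[\mathrm{const}_x]$ as fixed point classes, then apply the bijection $\pi_0(\Loop_f M)\cong \pi_1(M,*)_f$ to land on $R(f)$. The paper simply asserts the step you flag as delicate (your step~(2), that the intersection sign equals $i(f,U)$), so your explicit plan to verify the framing/sign convention via a local Euclidean model is, if anything, more careful than what appears there.
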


\begin{proof}
It is known that there exists an isomorphism of groups:
\begin{equation}\label{eq.proof.ell(f)}
\Omega_0^{\fr}(\Loop_f M) \xrightarrow{\cong} \mathbb{Z}[\pi_0(\Loop_f M)]
\end{equation}
The invariant $[\Gamma_{f} \pitchfork i_{\Delta}]$, which lies in $\Omega_0^{\fr}(\Loop_f M)$, is identified with $\ell(f)$ by Theorem \ref{KWindex}. Under this isomorphism, it corresponds to an element encoding the signed intersection numbers between the diagonal $\Delta_U$ of $U$ and the graph $\Gamma(f)$ of $f$, where $U \subseteq M$ is an open subset containing the fixed point class $[x]$. 

More precisely, under the isomorphism in \eqref{eq.proof.ell(f)}, $[\Gamma_{f} \pitchfork i_{\Delta}]$ maps to
\[
\sum_{[c_x] \in \pi_0(\Loop_f M)} ([\Delta_U] \cdot [\Gamma(f)]) [c_x],
\]
where $[\Delta_U] \cdot [\Gamma(f)]$ denotes the intersection number, and $[c_x]$ represents the constant path in $M$ based at $x \in \Fix(f)$.

Also, observe that two loops $c_x$ and $c_y$ in $\Loop_f M$ belong to the same component if and only if there exists a path $\alpha$ from $x$ to $y$ such that $\alpha \simeq f(\alpha)$. This condition precisely characterizes the fixed point classes.

Moreover, note that the signed intersection number between $\Delta_U$ and $\Gamma(f)$ corresponds to the fixed point index $i(f, U)$. Therefore, 
\begin{align*}
[\Gamma_{f} \pitchfork i_{\Delta}] \mapsto \sum_{[x] \in \operatorname{Fix}^c(f)} i(f,[x]) [c_x],
\end{align*}
via the isomorphism \eqref{eq.proof.ell(f)}. Furthermore, there is a one-to-one correspondence between following groups.
\begin{align*}
\Z[\pi_0(\Loop_f M)] & \xrightarrow{\cong} \Z[\pi_1(M,*)_f]\\
 [c_x] & \mapsto \zeta_x * f(\zeta_x)^{-1} * \gamma^{-1},
\end{align*}
where $\zeta_x$ is a chosen path from the based point $*$ to $x$. This gives the definition of the Reidemeister trace $R(f)$. Therefore, we obtained that
\begin{align*}
\Omega_0^{\fr}(\Loop_f M) & \to \Z[\pi_0(\Loop_f M)] \to \Z[\pi_1(M)_f]\\
\ell(f) & \mapsto \sum_{[x] \in \operatorname{Fix}^c(f)} i(f,[x]) [c_x] \mapsto R(f)
\end{align*}
\end{proof}

Next, we identify $\Omega_0^{\fr} (\Loop_f M^H)$ in terms of twisted conjugacy classes of the fundamental groups. For this purpose, we consider the connected components of $M^H$. Let $M^H_1, \ldots, M^H_k$ denote the connected components of $M^H$. Then, using the result above, we obtain the following identification:
\begin{align*}
\bigoplus_i \Omega_0^{\fr} (\Loop_f M^H_i)\cong \bigoplus_i \Z [\pi_0(\Loop_f M^H_i)] \xrightarrow{\cong} \bigoplus_i \Z [\pi_1(M^H_i,*)_{f^H_i}]
\numberthis\label{ident2}.
\end{align*}
By combining this with the tom Dieck splitting \eqref{tomDsplitting}, we derive a decomposition of $\Omega_0^{G,\fr}(\Loop_fM)$. With this groundwork in place, we will establish the connection between the equivariant invariant $\ell_G(f)$ and Reidemeister traces. To achieve this, we introduce an induced version of the Reidemeister trace.

\begin{definition}
Let $G$ be a finite group, which acts on a smooth, compact manifold $M$, and $f$ be a smooth self-map on $M$. Then, the \textit{reduced Reidemeister trace} $\overline{R(f)}$ is an element in $\Z[ \pi_1(M,*)_f]/G$ such that it is an image of $R(f)$ under quotient group homomorphism $\Z[ \pi_1(M,*)_f] \to \Z[ \pi_1(M,*)_f]/G$. That is, 
\begin{align*}
\overline{R(f)}= \sum_{\overline{[x]} \in \operatorname{Fix}^c(f)/G} \left( \sum_{[g x] \in G \cdot [x]}i(f,[g x]) \right) \overline{[x]}
\end{align*}
\end{definition}

First, we will show why this class is well-defined. The action $G$ on the set of fixed point classes is clearly given by $g[x]=[gx]$ for all $g \in G$, $[x] \in \operatorname{Fix}^c(f)$.

Now, let $[x]=[x']$ and $[y]=[y']$. If $[gx]=[y]$, then we need to show $[gx']=[y']$ for the well-definedness. Since $[x]=[x']$, there exists a path $\alpha$ from $x=f(x)$ to $x'=f(x')$ such that $\alpha \simeq f(\alpha)$. This implies that $[gx]=[gx']$ since $g(\alpha) \simeq g(f(\alpha))$. Similarly, there exists another path $\beta$ from $y$ to $y'$ such that $\beta \simeq f(\beta)$. If $[gx]=[y]$, then there is also a path $\gamma$ from $gx$ to $y$ such that $\gamma \simeq f(\gamma)$ Thus, the path $g(\alpha)^{-1} * \gamma * \beta$ from $gx'$ to $y'$ implies that $[gx']=[y']$ because $g(\alpha)^{-1} * \gamma * \beta$ and $f(g(\alpha)^{-1} * \gamma * \beta)$ are homotopic relative to the end points.

We can now state and prove the one of the main results of this paper.

\begin{theorem} \label{ident3}
The equivariant framed bordism group $\Omega_0^{G,\fr} (\Loop_f M)$ is isomorphic to the following group
\begin{align*}
    \bigoplus_{(H)} \left( \bigoplus_i \Z [\pi_1(M^H_i,*)_{f^H_i}] \right) \Big/ WH
\end{align*}
Furthermore, the invariant $\ell_G(f)$ maps to $\bigoplus_{(H), i} \overline{R(f^H_i)}$, where each $R(f_i^H)$ is the Reidemeister trace of the induced map $f^H_i: M^H_i \to M^H_i$, and $\overline{R(f^H_i)}$ is the reduced Reidemeister trace.
\end{theorem}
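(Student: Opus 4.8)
The plan is to assemble the proof from the three decompositions already established, reducing everything to naturality bookkeeping. First I would recall the chain of isomorphisms that is now available: Theorem \ref{splittingresult} gives
\[
\Omega_0^{G,\fr}(\Loop_f M) \cong \bigoplus_{(H)} \Omega_0^{\fr}(\Loop_f M^H)/WH, \qquad \ell_G(f) \mapsto \bigoplus_{(H)} \overline{\ell(f^H)};
\]
the identification \eqref{ident2} (an instance of Corollary \ref{ell(f)} applied componentwise to $M^H = \bigsqcup_i M^H_i$) gives
\[
\Omega_0^{\fr}(\Loop_f M^H) \cong \bigoplus_i \Omega_0^{\fr}(\Loop_f M^H_i) \cong \bigoplus_i \Z[\pi_1(M^H_i,*)_{f^H_i}], \qquad \ell(f^H) \mapsto \bigoplus_i R(f^H_i).
\]
Composing these two yields the claimed isomorphism $\Omega_0^{G,\fr}(\Loop_f M) \cong \bigl(\bigoplus_{(H)}\bigoplus_i \Z[\pi_1(M^H_i,*)_{f^H_i}]\bigr)/WH$, provided I check that passing to the $WH$-quotient commutes with the componentwise identification \eqref{ident2}, i.e. that the correspondence $\pi_0(\Loop_f M^H) \cong \bigsqcup_i \pi_1(M^H_i,*)_{f^H_i}$ is $WH$-equivariant for the natural $WH$-action on both sides (permuting the components $M^H_i$ and acting within each via the induced automorphisms). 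This equivariance is the only substantive point, and it follows because the bijection $\operatorname{Fix}^c(f^H) \to \pi_1(M^H,*)_{f^H}$ of the previous section is built purely from path-data that $WH$ respects; one should note the base-point/choice-of-path ($\zeta_x$, $\gamma$) ambiguities only affect the identification up to an inner twist, which is killed in the quotient, so the induced map on quotients is canonical.

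Next I would track $\ell_G(f)$ through the composite. By Theorem \ref{splittingresult}, $\ell_G(f) \mapsto \bigoplus_{(H)} \overline{\ell(f^H)}$, where the bar denotes the image under $\Omega_0^{\fr}(\Loop_f M^H) \to \Omega_0^{\fr}(\Loop_f M^H)/WH$. Applying the $WH$-equivariant isomorphism \eqref{ident2}, $\ell(f^H)$ corresponds to $\bigoplus_i R(f^H_i)$ by Corollary \ref{ell(f)}, so $\overline{\ell(f^H)}$ corresponds to the image of $\bigoplus_i R(f^H_i)$ in $\bigl(\bigoplus_i \Z[\pi_1(M^H_i,*)_{f^H_i}]\bigr)/WH$. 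It remains to observe that this image is exactly $\bigoplus_i \overline{R(f^H_i)}$ in the sense of the reduced Reidemeister trace defined above: the $WH$-orbit of a component $M^H_i$ may contain several indices $j$, and the $WH$-action carries $M^H_i$ isomorphically onto $M^H_j$ via some $g \in N_G(H)$, sending fixed points of $f^H_i$ to fixed points of $f^H_j$ and preserving fixed point indices (since $g$ is a diffeomorphism and the fixed point index is a diffeomorphism invariant, as used in the well-definedness discussion of $\overline{R(f)}$). Hence in the quotient the coefficients over an orbit get summed exactly as in the displayed formula for $\overline{R(f^H_i)}$, and a representative for each $WH$-orbit of pairs $(H,i)$ contributes one term $\overline{R(f^H_i)}$.

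I would then finish by writing the single commuting diagram
\[
\begin{tikzcd}
\Omega_0^{G,\fr}(\Loop_f M) \arrow[r,"\cong"] \arrow[d,"\cong"'] \& \bigoplus_{(H)} \Omega_0^{\fr}(\Loop_f M^H)/WH \arrow[d,"\cong"] \\
\bigl(\bigoplus_{(H)}\bigoplus_i \Z[\pi_1(M^H_i,*)_{f^H_i}]\bigr)/WH \arrow[r,equal] \& \bigoplus_{(H)}\bigl(\bigoplus_i \Z[\pi_1(M^H_i,*)_{f^H_i}]\bigr)/WH
\end{tikzcd}
\]
whose top row is Theorem \ref{splittingresult}, right column is \eqref{ident2}/Corollary \ref{ell(f)} passed to quotients, and left column is the composite; chasing $\ell_G(f)$ around gives $\bigoplus_{(H),i}\overline{R(f^H_i)}$ as required. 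The main obstacle, and the step deserving the most care, is the $WH$-equivariance of the identification $\pi_0(\Loop_f M^H) \cong \bigsqcup_i \pi_1(M^H_i,*)_{f^H_i}$ together with the compatibility of fixed point indices under the $WH$-action; everything else is formal naturality of the constructions in Section \ref{sec4}. One should also remark that $WH = N_G(H)/H$ acts on $M^H$ because $H$ acts trivially there, which is what makes all of the above well-posed.
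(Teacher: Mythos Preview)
Your proposal is correct and follows essentially the same approach as the paper: the paper's proof is a two-line argument that combines Theorem \ref{splittingresult} with Corollary \ref{ell(f)} via the identification \eqref{ident2}, exactly as you do. You add more detail than the paper does on the $WH$-equivariance of the identification $\pi_0(\Loop_f M^H)\cong\bigsqcup_i\pi_1(M^H_i,*)_{f^H_i}$ and on the behavior of fixed point indices under the $WH$-action; the paper defers the latter point to Proposition \ref{fix.point.index} rather than invoking it here, but your extra care does no harm.
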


\begin{proof}
The first statement is clear from the isomorphism of the map, which is given above \eqref{ident2}. Combining Theorem \ref{splittingresult} with Corollary \ref{ell(f)}, we obtained the desired result: Each projection $\overline{\ell(f^H)}=\overline{\oplus_i \ell(f^H_i)}$ of $\ell_G(f)$ is identified with $\overline{\oplus_i R(f^H_i)}$.

\end{proof}

We have shown that $\ell_G(f)$ decomposes into reduced Reidemeister traces $\overline{R(f^H)}$. Since $R(f^H) = 0$ implies $\overline{R(f^H)} = 0$, it is clear that $R(f^H)$ contains at least as much information as $\overline{R(f^H)}$. This naturally raises the question of whether any information is lost when passing to the reduced version, particularly in the presence of the Weyl group action. It turns out that this is not the case: the reduced Reidemeister traces retain the same amount of information as the unreduced Reidemeister traces.

\begin{theorem}\label{quotientR(f)}
Given a finite group $G$ and a $G$-map $f: M \to M$ on a smooth $G$-manifold $M$, we have the following equivalence between the (unreduced) Reidemeister trace and the reduced one:
$$
R(f) = 0 \quad \text{if and only if} \quad \overline{R(f)} = 0.
$$
More precisely, the reduced Reidemeister trace is given by
$$
\overline{R(f)} = \sum\limits_{\overline{[x]} \in \operatorname{Fix}^c(f)/G} |G \cdot [x]| \cdot i(f,[x]) \cdot \overline{[x]} \in \mathbb{Z}[\pi_1(M, *)_f]/G.
$$
Consequently, the Nielsen number satisfies that
$$
N(f) = 0 \quad \text{if and only if} \quad \overline{N(f)} = 0.
$$
\end{theorem}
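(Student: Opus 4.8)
The plan is to reduce the entire statement to a single observation: the fixed-point index is constant along a $G$-orbit of fixed-point classes. First I would note that $G$-equivariance of $f$ means $f\circ g = g\circ f$ for every $g\in G$, so each $g$ is a diffeomorphism of $M$ that carries $\Fix(f)$ to itself and permutes the set $\operatorname{Fix}^c(f)$ of fixed-point classes by $[x]\mapsto[gx]$. Fixing a class $[x]$ and an open set $U$ that isolates it from all other fixed points, the translate $g\cdot U$ isolates $[gx]$, and since $g$ conjugates $f|_U$ to $f|_{g\cdot U}$, the conjugation-invariance (``commutativity'') of the fixed-point index gives
\[
i(f,[gx]) = i(f,g\cdot U) = i(f,U) = i(f,[x]).
\]

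Given this, the explicit formula for $\overline{R(f)}$ drops out of its definition: in
\[
\overline{R(f)} = \sum_{\overline{[x]}\in\operatorname{Fix}^c(f)/G}\Big(\sum_{[gx]\in G\cdot[x]} i(f,[gx])\Big)\overline{[x]},
\]
every term of the inner sum equals $i(f,[x])$ and there are $|G\cdot[x]|$ of them, so it collapses to $|G\cdot[x]|\cdot i(f,[x])$, yielding the claimed expression for $\overline{R(f)}$ in $\Z[\pi_1(M,*)_f]/G$.

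For the equivalence, one direction is formal: $\overline{R(f)}$ is by construction the image of $R(f)$ under the quotient homomorphism $\Z[\pi_1(M,*)_f]\to\Z[\pi_1(M,*)_f]/G$, so $R(f)=0$ forces $\overline{R(f)}=0$. For the converse I would use that $\Z[\pi_1(M,*)_f]/G$ is the free abelian group on the orbit set $\operatorname{Fix}^c(f)/G$ (the $G$-action merely permutes basis elements, and only finitely many twisted conjugacy classes carry nonzero index because $\Fix(f)$ is compact), so $\overline{R(f)}=0$ means each coefficient $|G\cdot[x]|\cdot i(f,[x])$ vanishes in $\Z$; since $|G\cdot[x]|$ is a positive integer and $\Z$ has no zero divisors, $i(f,[x])=0$ for every class $[x]$, hence $R(f)=0$. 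The Nielsen statement then follows immediately, since $N(f)$ and $\overline{N(f)}$ count the nonzero coefficients of $R(f)$ and $\overline{R(f)}$ respectively, so $N(f)=0\iff R(f)=0$ and $\overline{N(f)}=0\iff\overline{R(f)}=0$. The only step requiring genuine care is the first—arranging that the isolating neighborhood behaves well under $G$-translation—but equivariance of $f$ makes this automatic and the invariance of the fixed-point index under conjugation by a homeomorphism is classical; everything after it is bookkeeping in a free abelian group.
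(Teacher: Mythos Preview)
Your proposal is correct and follows essentially the same approach as the paper: the paper isolates the key step as a separate Proposition stating $i(f,[gx])=i(f,[x])$ and proves it by a direct chart computation (using $g^{-1}\circ f\circ g=f$), which is exactly the conjugation-invariance you invoke, and then deduces the theorem from it just as you do. Your write-up is slightly more explicit about why vanishing of $\overline{R(f)}$ forces each $i(f,[x])=0$ (no zero divisors in $\Z$, $|G\cdot[x]|>0$), but the argument is the same.
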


\begin{proof}
The "if" statement is clear. The converse is the direct consequence of the Proposition \ref{fix.point.index} given by below.
\end{proof}

\begin{proposition}\label{fix.point.index}
Given an equivariant map $f: X \to X$, where $X$ is a $G$-space, the fixed point index $i(f,[x])$ of the point $x \in X$ is the same as the fixed point index $i(f,[g x])$ for all $g \in G$.
\end{proposition}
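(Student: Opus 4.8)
The plan is to exploit the $G$-equivariance of $f$ to produce, for each $g \in G$, a local homeomorphism conjugating the behavior of $f$ near $x$ to its behavior near $gx$, and then invoke the naturality (topological invariance) of the fixed point index under such conjugations. Concretely, since $f$ is a $G$-map, for every $g \in G$ the translation map $g\colon X \to X$, $z \mapsto gz$, is a homeomorphism satisfying $g \circ f = f \circ g$. Hence $g$ carries $\Fix(f)$ bijectively onto itself, and it carries the fixed point class $[x]$ onto $[gx]$ (this compatibility was already verified just before the statement of Proposition~\ref{fix.point.index}). The key point is that $g$ restricts to a homeomorphism of a suitable isolating open neighborhood $U$ of the fixed points in $[x]$ onto an isolating open neighborhood $gU$ of the fixed points in $[gx]$.

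The main step is the index computation. Working locally as in the definition of the fixed point index, embed a neighborhood of $x$ in $\R^n$, so that $f$ is modeled by a map $f \colon U \to \R^n$ with $\Fix(f) \cap U$ compact, and likewise for $gx$ via the homeomorphism $\psi := g|_U \colon U \to gU$. Then $f$ on $gU$ is conjugate to $f$ on $U$: we have $f|_{gU} = \psi \circ f|_U \circ \psi^{-1}$ on the relevant set. Applying $\psi_*$ to the diagram defining the index, the map
\begin{align*}
(\mathrm{id} - f)_* \colon H_n(U, U - \Fix(f)) \to H_n(\R^n, \R^n - \{0\})
\end{align*}
is intertwined with the corresponding map for $gU$ via the isomorphisms on homology induced by $\psi$, and since $\psi$ carries the fundamental class $[S^n]_F$ for $[x]$ to the fundamental class for $[gx]$ (up to sign), the two index integers agree. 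The one subtlety is the sign/orientation bookkeeping: a priori $\psi$ could reverse local orientation, which would seem to flip the index. The resolution is that the index is computed from $(\mathrm{id} - f)$, a self-map of a pair in the same copy of $\R^n$, so any orientation change from $\psi$ appears symmetrically on source and target and cancels; equivalently, the fixed point index is a homotopy-theoretic degree that is invariant under \emph{all} homeomorphic changes of chart, orientation-preserving or not. I would phrase this via the standard topological-invariance property of the index (e.g.\ as in \cite{DOLD19651, brownfix}), applied to the homeomorphism $\psi$.

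I expect the main obstacle to be precisely this orientation/naturality argument: one must be careful that the "signed intersection number" or "local degree" description of $i(f, [x])$ is genuinely natural with respect to the conjugating homeomorphism $\psi$, not merely with respect to orientation-preserving diffeomorphisms. Once topological invariance of the index under conjugation is invoked cleanly, the statement is immediate: $i(f, [gx]) = i(\psi f \psi^{-1}, \psi(U)) = i(f, U) = i(f, [x])$. Summing over the $G$-orbit of $[x]$ then yields the formula for $\overline{R(f)}$ asserted in Theorem~\ref{quotientR(f)}, and in particular each orbit contributes $|G \cdot [x]| \cdot i(f,[x])$ times the class $\overline{[x]}$, so $\overline{R(f)} = 0$ forces every $i(f,[x]) = 0$, i.e.\ $R(f) = 0$, and the corresponding statement for Nielsen numbers follows by counting nonzero coefficients.
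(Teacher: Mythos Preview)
Your proposal is correct and follows the same core idea as the paper: conjugate the local picture near $[x]$ to the one near $[gx]$ via the homeomorphism $g$, using $g^{-1}fg=f$. The one notable difference is in execution. You invoke the general topological invariance of the index under conjugation by a homeomorphism (citing \cite{DOLD19651,brownfix}) and then spend effort arguing away a possible orientation sign. The paper sidesteps this entirely by a judicious choice of chart: if $\psi\colon U\to\R^n$ is the chart used to compute $i(f,[x])$, the paper takes the chart at $gU$ to be $\phi:=\psi\circ g^{-1}\colon gU\to\R^n$. With this choice one has literally $\phi\circ f\circ\phi^{-1}=\psi\circ(g^{-1}fg)\circ\psi^{-1}=\psi\circ f\circ\psi^{-1}$ and $(\phi(gV),\phi(gV)\setminus\phi(gF))=(\psi(V),\psi(V)\setminus\psi(F))$, so the two index computations are \emph{identical}, not merely related by a homeomorphism; no orientation bookkeeping or appeal to an external invariance theorem is needed. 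Your route is fine, but the paper's chart trick is the cleaner way to close the argument.
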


\begin{proof}
For simplicity, we denote the fixed point indices as follows: $i(x):=i(f,[x])$ and $i(g x):=i(f,[g x])$. Recall that the fixed point index is defined locally. Let $U$ be an open set in $X$ such that $F \subseteq U$, where $F$ is the fixed point class $[x]$ of $x$. Denote the chart map as $\psi\colon U \to \R^n$, and let $V$ be an open $n$-ball neighborhood of $F$ in $U$ such that $f(V) \subseteq U$. Then, the fixed point index $i(x)$ is defined by the following induced map:
\begin{align*}
    (\mathrm{id}- \psi \circ f \circ \psi^{-1})_* \colon H_n(\psi(V), \psi(V)-\psi(F)) \to H_n(\R^n,\R^n-\{0\}) \cong \Z
\end{align*}
More precisely, 
$$
(\mathrm{id}- \psi \circ f \circ \psi^{-1})_*([S^{n}]_F)=i(x)\cdot 1.
$$

Now, we can define $i(g x)$ by using the maps that are given to define $i(x)$. Consider the set $gU=\{g x\mid x \in U\}$. Clearly, this is an open set in $X$ since each $g \in G$ is a homeomorphism. Also, it is easy to see that $gF=\{g  x\mid x\in F\} \subseteq gU$ coincides with the fixed point class of $g x$. We can take the chart map as $\phi\colon gU \xrightarrow{g^{-1}} U \xrightarrow{\psi} \R^n$. Moreover, $gV$ is an open neighborhood of $gF$ in $gU$ such that $f(gV) \subseteq gU$ because $gf(V) \subseteq gU$. Thus, $i(g x)$ can be defined by the induced map below:
\begin{align*}
    (\mathrm{id}- \psi \circ g^{-1} \circ f \circ g \circ \psi^{-1})_* \colon H_n(\phi(gV), \phi(gV)-\phi(gF)) \to H_n(\R^n,\R^n-\{0\}) \cong \Z
\end{align*}

Since $g^{-1} \circ f \circ g = f$ and $(\phi(gV), \phi(gV)-\phi(gF))=(\psi(V), \psi(V)-\psi(F))$, the fixed point indices $i(x)$ and $i(g x)$ are equal.
\end{proof}

\section{An Application to Periodic Points}\label{sec5}

Let $M$ be a closed manifold, and let $f \colon M \to M$ be a self-map. Given an integer $n \geq 2$, a point $x \in M$ is called \textit{$n$-periodic} if it is a fixed point of the $n$th iterate of $f$, that is, $f^n(x) = x$.  

The \textit{homotopical periodic point problem} asks whether a given self-map can be deformed into another map that has no $n$-periodic points. An obstruction theory for this problem was developed in \cite[Theorem J]{KW2}, using the equivariant fixed point problem for the \textit{Fuller map} $\Phi_n(f)$ of $f$:
\[
\begin{aligned}
    \Phi_n(f): M \times \cdots \times M &\to M \times \cdots \times M \\
    (x_1, \ldots, x_n) &\mapsto (f(x_n), f(x_1), \ldots, f(x_{n-1})).
\end{aligned}
\]
There is a natural cyclic group action of $\mathbb{Z}_n = \langle g \rangle$ on $M^n$, given by  
\[
g \cdot (x_1, \ldots, x_n) = (x_n, x_1, \ldots, x_{n-1}).
\]
It is straightforward to verify that the Fuller map $\Phi_n(f)$ is $\mathbb{Z}_n$-equivariant. Furthermore, there exists a bijective $\mathbb{Z}_n$-equivariant correspondence between the $n$-periodic points of $f$ and the fixed points of $\Phi_n(f)$, given by  
\[
x \mapsto (x, f(x), \dots, f^{n-1}(x)).
\]
In particular, the Fuller map $\Phi_n(f)$ is fixed-point free if and only if $f$ has no $n$-periodic points.  

We define the \textit{homotopy $n$-periodic point set} $\ho P_n(f)$ of $f$ as the set of $n$-tuples $(\lambda_1, \lambda_2, \ldots, \lambda_n)$ of paths $\lambda_i$ in $M$ satisfying the condition  
\[
f(\lambda_i(0)) = \lambda_{i+1}(1),
\]
for all $i$ modulo $n$. The action of $\mathbb{Z}_n$ on $\ho P_n(f)$ is given by cyclic permutation of components:  
\[
g \cdot (\lambda_1, \lambda_2, \ldots, \lambda_n) = (\lambda_n, \lambda_1, \ldots, \lambda_{n-1}).
\]

We now state the result of Klein and Williams concerning the $n$-periodic point problem.

\begin{theorem}{\cite{KW2}} \label{thmI}
There is a homotopy theoretically defined invariant
\begin{align*}
    \ell_n(f) \in \Omega_0^{\Z_n,fr}(\ho P_n(f))
\end{align*}
which is an obstruction to deforming $f$ to an n-periodic point free self map.
\end{theorem}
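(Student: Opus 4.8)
The plan is to deduce Theorem \ref{thmI} from the equivariant fixed point invariant of Theorem \ref{KWfix} applied to the Fuller map. Concretely, I would define
\[
\ell_n(f) \;:=\; \ell_{\Z_n}\bigl(\Phi_n(f)\bigr) \;\in\; \Omega_0^{\Z_n,\fr}\bigl(\Loop_{\Phi_n(f)} M^n\bigr),
\]
where $M^n$ is equipped with the cyclic permutation action; since $M$ is closed and smooth, $M^n$ is a closed smooth $\Z_n$-manifold and $\Z_n$ is finite, so Theorem \ref{KWfix} applies verbatim. The first step is a short verification: a coordinatewise computation shows $\Phi_n(f)$ is $\Z_n$-equivariant, and the assignment $x \mapsto (x, f(x), \ldots, f^{n-1}(x))$ is a $\Z_n$-equivariant bijection from the set of $n$-periodic points of $f$ onto $\Fix(\Phi_n(f))$. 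In particular $f$ has no $n$-periodic points if and only if $\Phi_n(f)$ is fixed-point free.

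The second step identifies the coefficient space. A path $\lambda \colon [0,1] \to M^n$ is precisely an $n$-tuple $(\lambda_1, \ldots, \lambda_n)$ of paths in $M$, and unwinding the twisting condition $\Phi_n(f)(\lambda(0)) = \lambda(1)$ coordinate by coordinate yields exactly $f(\lambda_i(0)) = \lambda_{i+1}(1)$ for all $i$ mod $n$. Moreover the $\Z_n$-action on $\Loop_{\Phi_n(f)} M^n$ given by postcomposing a twisted path with the generator of $\Z_n$ corresponds, under the tuple description, to cyclically permuting $(\lambda_1,\ldots,\lambda_n)$. Hence there is a $\Z_n$-equivariant homeomorphism $\Loop_{\Phi_n(f)} M^n \cong \ho P_n(f)$, and pushing $\ell_{\Z_n}(\Phi_n(f))$ forward along the induced isomorphism on equivariant framed bordism places it in $\Omega_0^{\Z_n,\fr}(\ho P_n(f))$, as required.

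The third step is the obstruction property. If $f$ is homotopic to a self-map $f'$ with no $n$-periodic points via a homotopy $f_t$, then $t \mapsto \Phi_n(f_t)$ is a $\Z_n$-homotopy from $\Phi_n(f)$ to $\Phi_n(f')$, and $\Phi_n(f')$ is fixed-point free by Step 1. The forward implication of Theorem \ref{KWfix} --- which requires no dimension or gap hypotheses and therefore applies even though $M^n$ need not satisfy them --- gives $\ell_{\Z_n}(\Phi_n(f')) = 0$, and naturality of the Klein--Williams invariant under an equivariant homotopy of the self-map identifies the ambient groups $\Omega_0^{\Z_n,\fr}(\Loop_{\Phi_n(f)} M^n) \cong \Omega_0^{\Z_n,\fr}(\Loop_{\Phi_n(f')} M^n)$ compatibly with the invariants, so $\ell_n(f) = 0$.

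The main obstacle is the bookkeeping in Steps 2 and 3. One must pin down the $\Z_n$-action on the twisted loop space $\Loop_{\Phi_n(f)} M^n$ coming from the equivariant structure and check that it matches the cyclic action on $\ho P_n(f)$ exactly --- an index-shift computation that is easy to get wrong by one --- and one must apply the homotopy invariance of $\ell_{\Z_n}$ in the correct variance so that vanishing of $\ell_{\Z_n}(\Phi_n(f'))$ in its own bordism group genuinely transports back to vanishing of $\ell_{\Z_n}(\Phi_n(f))$. The remaining ingredients --- equivariance of the Fuller map, the periodic-point/fixed-point correspondence, and smoothness of the $\Z_n$-manifold $M^n$ --- are routine.
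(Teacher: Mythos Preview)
Your proposal is correct and follows essentially the same approach as the paper: define $\ell_n(f):=\ell_{\Z_n}(\Phi_n(f))$, identify $\Loop_{\Phi_n(f)}M^n\cong \ho P_n(f)$ equivariantly, and invoke the forward direction of Theorem~\ref{KWfix} after observing that a homotopy $f_t$ induces a $\Z_n$-homotopy $\Phi_n(f_t)$ to a fixed-point-free map. The only minor simplification is that Theorem~\ref{KWfix} already incorporates equivariant homotopy invariance in its hypothesis, so you can apply it directly to $\Phi_n(f)$ rather than first computing $\ell_{\Z_n}(\Phi_n(f'))=0$ and then transporting back.
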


This invariant $\ell_n(f)$ does not serve as a complete obstruction, as the authors were unsure whether the statement ``the Fuller map $\Phi_n(f)$ is fixed-point free if and only if $f$ is $n$-periodic point free" holds up to homotopy. It is evident that if $f$ can be deformed to $f'$, where $f'$ has no $n$-periodic points, then $\Phi_n(f)$ is homotopic to $\Phi_n(f')$, which is fixed-point free. On the other hand, there is no homotopical justification for why the reverse implication should necessarily hold true. Indeed, if the diagram \eqref{0cartesian}, described below, is $0$-cartesian, then the $n$-periodic point problem for $f$ reduces to the $\mathbb{Z}_n$-equivariant fixed-point problem for $\Phi_n(f)$.

Let $\operatorname{end}(M)$ be the space of self-maps of $M$, and let $\operatorname{end}^{\star_n}(M) \subseteq \operatorname{end}(M)$ denote the subspace consisting of self-maps that have no $n$-periodic points. Similarly, we denote the space of $\mathbb{Z}_n$-equivariant self-maps of $M^n$ by $\operatorname{end}(M^n)^{\mathbb{Z}_n}$, and let $\operatorname{end}^\star(M^n)^{\mathbb{Z}_n} \subseteq \operatorname{end}(M^n)^{\mathbb{Z}_n}$ be the subspace of equivariant self-maps of $M^n$ that have no fixed points.  

There is an operator, called \textit{Fuller transform} $\Phi_n$, defined by
\begin{align*}
    \Phi_n \colon \operatorname{end}(M) & \to \operatorname{end}(M^n)^{\Z_n}\\
    f & \mapsto \Phi_n (f).
\end{align*}

The commutative diagram is given as follows. The vertical maps are the Fuller transforms and the horizontal ones are inclusions.

\begin{equation}\label{0cartesian}
\begin{tikzcd}
    \& \operatorname{end}^{\star_n}(M) \arrow[r,""] \arrow[d,"\Phi_n"] \& \operatorname{end}(M)  \arrow[d,"\Phi_n"]\\
    \& \operatorname{end}^\star(M^n)^{\Z_n} \arrow[r,""] \& \operatorname{end}(M^n)^{\Z_n}
\end{tikzcd}    
\end{equation}

This square is indeed cartesian. By the universal property of a homotopy pullback, there is a natural map from $\operatorname{end}^{\star_n}(M)$ to the homotopy pullback of the diagram  
\[
\operatorname{end}^\star(M^n)^{\Z_n} \xrightarrow{ } \operatorname{end}(M^n)^{\Z_n} \xleftarrow{\Phi_n} \operatorname{end}(M).
\]

We denote this map as follows.
\[
\psi\colon \mathrm{end}^{\star_n}(M) \to  \operatorname{end}^\star(M^n)^{\Z_n}\times_{\operatorname{end}^\star(M^n)^{\Z_n}}^h \mathrm{end}(M).
\]

If the square \eqref{0cartesian} is $0$-cartesian, then the induced map of $\psi$ on connected components is surjective, meaning
\[
\psi\colon \pi_0(\operatorname{end}^{\star_n}(M)) \twoheadrightarrow \pi_0( \operatorname{end}^\star(M^n)^{\Z_n}\times_{\operatorname{end}^\star(M^n)^{\Z_n}}^h \operatorname{end}(M)).
\]

The universal property of the pullback defines the map $\psi$.
\begin{equation*}
\begin{tikzcd}
\operatorname{end}^{\star_n}(M) \arrow[ddr,bend right,"\Phi_n"'] \arrow[drr,bend left,"i"] \arrow[dr,dashed,"\psi"] \\
\&   \operatorname{end}^\star(M^n)^{\Z_n} \times_{\operatorname{end}^\star(M^n)^{\Z_n}}^h \operatorname{end}(M)\arrow[r,"\mathrm{pr}_1"] \arrow[d,"\mathrm{pr}_2"] \& \operatorname{end}(M) \arrow[d,"\Phi_n"] \\
\& \operatorname{end}^\star(M^n)^{\Z_n} \arrow[r,"i"] \& \operatorname{end}(M^n)^{\Z_n}
\end{tikzcd}
\end{equation*}

From the diagram above, we have that  
$$\psi: f \mapsto (\Phi_n(f), c_{\Phi_n(f)},f),$$
where $c_{\Phi_n(f)}$ is the constant path on the mapping space $\operatorname{end}^\star(M^n)^{\Z_n}$. Now, assume that the induced map $\psi$ on the connected component is surjective map. Then, for any $[(\theta, \lambda, g)] \in \pi_0( \operatorname{end}^\star(M^n)^{\Z_n}\times_{\operatorname{end}^\star(M^n)^{\Z_n}}^h \operatorname{end}(M))$, there exists $f \in \operatorname{end}^{\star_n}(M)$ such that
\[[(\theta, \lambda, g)]=[(\Phi_n(f),c_{\Phi_n(f)},f)]\]

From here, one can conclude that if $\Phi_n(g) \simeq \Phi_n(f)$ for $\Phi_n(g) \in \operatorname{end}(M^n)^{\Z_n}$ and $\Phi_n(f) \in \operatorname{end}^\star(M^n)^{\Z_n}$, then $g \simeq f$  for $g\in \operatorname{end}(M)$ and $f \in \operatorname{end}^{\star_n}(M)$. As a result, $\Phi_n(f)$ is fixed-point free if and only if $f$ is $n$-periodic point free up to homotopy provided that the diagram \eqref{0cartesian} is $0$-cartesian. However, as noted earlier, we are uncertain whether it satisfies the $0$-cartesian property. Nevertheless, one can still define an invariant of self maps which is trivial when the self map is homotopic to an n-periodic point free one. This invariant is the one $\ell_n(f)$, given in Theorem \ref{thmI}, and it is defined as follows.
\[
\ell_n(f):=\ell_{\Z_n} (\Phi_n(f))
\]

Note that $\ell_n(f)$ lives in $\Omega_0^{\Z_n,fr}(\Loop_{\Phi_n(f)}M^n)$, which is isomorphic to $\Omega_0^{\Z_n,fr}(\ho P_n)$ since
\[
\Loop_{\Phi_n(f)}(M^n) \cong \ho P_n.
\]

It is now clear that if $f$ is homotopic to an $n$-periodic point free map, then $\ell_n(f)$ vanishes. This is the direct consequence of Theorem \ref{KWfix} as we know that if $f$ is homotopic to an $n$-periodic point free map, then $\Phi_n(f)$ is homotopic to fixed-point free map equivariantly. This proves Theorem \ref{thmI}.

Furthermore, if we apply the converse of Theorem \ref{KWfix} to $\Phi_n(f)$, then we obtain the following conclusion.

\begin{corollary}{\cite[11.2]{KW2}}\label{cor.thmH}
If $\dim M \geq 3$ and $\ell_n(f)=0$, then $\Phi_n(f)$ is equivariantly homotopic to a fixed point free map.
\end{corollary}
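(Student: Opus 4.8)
The plan is to deduce this directly from the converse part of Theorem~\ref{KWfix}, applied not to $M$ itself but to the closed smooth $\Z_n$-manifold $M^n$ equipped with the cyclic permutation action, and to the $\Z_n$-map $\Phi_n(f)$. By definition $\ell_n(f)=\ell_{\Z_n}(\Phi_n(f))$, so the hypothesis $\ell_n(f)=0$ is exactly the vanishing of the Klein--Williams invariant of $\Phi_n(f)$; thus the entire task is to check that the $\Z_n$-manifold $M^n$ satisfies the two dimension hypotheses appearing in Theorem~\ref{KWfix}.

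First I would identify the fixed-point submanifolds of $M^n$. Every subgroup of $\Z_n$ has the form $\Z_d$ for a divisor $d\mid n$, generated by $g^{n/d}$, and a tuple $(x_1,\dots,x_n)$ is fixed by $g^{n/d}$ precisely when it is $(n/d)$-periodic, i.e.\ $x_i=x_{i+n/d}$ for all $i$ (indices modulo $n$). Projection onto the first $n/d$ coordinates therefore gives a diffeomorphism $(M^n)^{\Z_d}\cong M^{n/d}$, so $\dim(M^n)^{\Z_d}=(n/d)\dim M$. Each such $\Z_d$ genuinely occurs as an isotropy group (for $d<n$ take a point whose first $n/d$ coordinates are pairwise distinct and extend periodically; the diagonal handles $d=n$), so no condition is vacuously skipped.

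For the first bullet of Theorem~\ref{KWfix}, the minimum of $\dim(M^n)^{\Z_d}$ over $d\mid n$ is attained at $d=n$, giving $\dim(M^n)^{\Z_n}=\dim M\geq 3$. For the gap condition, note that $\Z_d\subseteq\Z_{d'}$ in $\Z_n$ if and only if $d\mid d'$, so a proper inclusion $\Z_d\subsetneq\Z_{d'}$ of isotropy subgroups forces $d'\geq 2d$, and since $d'\mid n$ also $n\geq d'\geq 2d$. Then
\[
\dim(M^n)^{\Z_d}-\dim(M^n)^{\Z_{d'}}=\Bigl(\tfrac{n}{d}-\tfrac{n}{d'}\Bigr)\dim M\geq\tfrac{n}{2d}\dim M\geq\dim M\geq 3>2,
\]
which is exactly the required gap $\dim(M^n)^{\Z_{d'}}\leq\dim(M^n)^{\Z_d}-2$. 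With both hypotheses verified, Theorem~\ref{KWfix} applies to $\Phi_n(f)$ and yields that $\Phi_n(f)$ is $\Z_n$-equivariantly homotopic to a fixed-point-free map.

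There is no deep obstacle in this argument: its content lies entirely in describing the fixed-point sets $(M^n)^{\Z_d}\cong M^{n/d}$ and observing that the standard gap condition for the permutation action on $M^n$ is automatic as soon as $\dim M\geq 3$. The only points demanding a little care are keeping the direction of the subgroup inclusion $\Z_d\subseteq\Z_{d'}\iff d\mid d'$ straight in the gap estimate, and confirming that every relevant $\Z_d$ really arises as an isotropy group so that the dimension hypotheses cannot be side-stepped.
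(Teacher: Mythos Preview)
Your proposal is correct and follows essentially the same route as the paper: reduce to verifying the two dimension hypotheses of Theorem~\ref{KWfix} for the $\Z_n$-action on $M^n$, use the identification $(M^n)^{\Z_d}\cong M^{n/d}$, and check the gap condition via the elementary divisibility estimate $n/d - n/d' \geq 1$ when $\Z_d\subsetneq\Z_{d'}$. The paper's proof is slightly terser but uses exactly the same ingredients; your added remark that each $\Z_d$ genuinely occurs as an isotropy group is a harmless elaboration.
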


\begin{proof}
It is enough to show that when $\dim M \geq 3$, we have  $\dim(M^n)^{\Z_k} \geq 3$ and 
$$\dim(M^n)^{\Z_l}-\dim(M^n)^{\Z_k}\geq 2,$$
for all $k|n$ and $l|k$ such that $l \neq 1, k$. 

Note that $(M^n)^{\Z_k} \cong M^{\frac{n}{k}}$. Therefore, $\dim((M^n)^{\Z_k}) \geq 3$. Moreover, since $l|k$ and $l\neq k$, we obtain that 
\[
\dim(M^n)^{\Z_l}-\dim(M^n)^{\Z_k}= \dim M^{\frac{n}{l}} -\dim M^{\frac{n}{k}}\geq \dim M >2.
\]
\end{proof}

There is another solution for the $n$-periodic point problem, which is developed by Jezierski \cite{jezierski}.

\begin{theorem}{\cite[2.3]{jezierski}}\label{Jezierski}
Let $f : M \rightarrow M$ be a self map of a closed, smooth manifold $M$ such that $\dim M\geq3$. Then, $f$ is homotopic to a map $g$ without $n$-periodic points if and only if $N(f^k)=0$ for any divisor $k$ of $n$, where $f^k$ is the $k$-fold composition of $f$ with itself.
\end{theorem}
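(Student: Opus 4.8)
My plan is to prove the two implications separately; the forward one is elementary, and the real work is in the converse, which I would route through the $\Z_n$-equivariant fixed point problem for the Fuller map, exactly as the rest of this section is set up.

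For the forward direction, suppose $f$ is homotopic to a map $g$ with no $n$-periodic points, i.e. $\Fix(g^n)=\emptyset$. For every divisor $k$ of $n$ one has $\Fix(g^k)\subseteq\Fix(g^n)=\emptyset$, since $g^k(x)=x$ forces $g^n(x)=(g^k)^{n/k}(x)=x$. An empty fixed point set has vanishing Reidemeister trace, so $N(g^k)=0$; since $f^k\simeq g^k$ and the Nielsen number is a homotopy invariant, $N(f^k)=0$.

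For the converse, assume $N(f^k)=0$ for all $k\mid n$ and $\dim M\geq 3$. First I would identify the Reidemeister-trace content of the Klein--Williams invariant $\ell_n(f)=\ell_{\Z_n}(\Phi_n(f))$. The subgroups of $\Z_n$ are indexed by the divisors of $n$, and for each such subgroup $H$ one has $(M^n)^H\cong M^k$ (with $k$ running over all divisors of $n$ as $H$ runs over all subgroups), on which $\Phi_n(f)$ restricts to the Fuller map $\Phi_k(f)\colon M^k\to M^k$. A standard identification of the fixed point data of a Fuller map --- its fixed point classes correspond bijectively and index-preservingly to those of $f^k$ --- identifies $R(\Phi_k(f))$ with $R(f^k)$ under the identification of twisted conjugacy sets, so in particular $N(\Phi_k(f))=N(f^k)$. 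Hence the hypothesis says every unreduced Reidemeister component $R(\Phi_k(f))$ vanishes; by Theorem \ref{quotientR(f)} every reduced component $\overline{R(\Phi_k(f))}$ then vanishes, and by Theorem \ref{ident3} this forces $\ell_n(f)=0$. Now Corollary \ref{cor.thmH} applies --- the hypothesis $\dim M\geq 3$ is exactly what verifies the gap conditions there --- so $\Phi_n(f)$ is $\Z_n$-equivariantly homotopic to a fixed-point-free self-map of $M^n$.

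The last step, which I expect to be the main obstacle, is to descend this back to $M$: from ``$\Phi_n(f)$ is equivariantly fixed-point-free up to homotopy'' one wants ``$f$ is $n$-periodic-point-free up to homotopy''. This is precisely the implication that would be formal if the square \eqref{0cartesian} were $0$-cartesian, which is not known, so it cannot be obtained by homotopy-theoretic means alone. Jezierski's argument \cite{jezierski} instead produces the deformation of $f$ directly, by an induction over the divisors $k$ of $n$ in which the period-$k$ points are removed one period at a time by Wecken-type surgery carried out inside $M$ (possible since $\dim M\geq 3$), each stage arranged compatibly with the removals already performed for proper divisors of $k$. Completing the converse along these lines --- rather than deducing it from the vanishing of $\ell_n(f)$ alone --- is where the genuine difficulty lies, and it is the reason Theorem \ref{Jezierski} is invoked here as an external input rather than derived from the Klein--Williams machinery.
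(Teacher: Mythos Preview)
The paper does not prove Theorem~\ref{Jezierski} at all: it is stated with the citation \cite[2.3]{jezierski} and used as an external input, with no proof provided. You correctly diagnose this in your final paragraph --- the Klein--Williams route gets you as far as $\ell_n(f)=0$ and hence, via Corollary~\ref{cor.thmH}, to $\Phi_n(f)$ being equivariantly homotopic to a fixed-point-free map, but the descent back to an $n$-periodic-point-free deformation of $f$ itself is exactly the content of the square \eqref{0cartesian} being $0$-cartesian, which the paper explicitly flags as unknown. So your proposal is not a proof, and you know it: the middle portion (vanishing of $\ell_n(f)$) is correct but irrelevant to establishing the theorem, since the final step cannot be closed without importing Jezierski's Wecken-type induction anyway.

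In short, your assessment matches the paper's: Theorem~\ref{Jezierski} is quoted, not proved, precisely because the homotopical machinery developed here does not yield the converse on its own. The forward direction you sketch is correct and standard; for the converse there is nothing to compare, as the paper offers no argument beyond the citation.
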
 

Now, we pose the following question: Does the Klein-Williams invariant $\ell_n(f)$ contain at least as much information as the Nielsen numbers $N(f^k)$?  

Klein and Williams conjectured that this is indeed the case. Their result, Theorem \ref{thmJ}, suggests that each projection of $\ell_n(f)$ under the tom Dieck splitting lies in an abelian group that contains the Reidemeister trace $R(f^k)$.  

\begin{theorem}{\cite{KW2}} \label{thmJ}
The abelian groups given as follows are isomorphic.
   \begin{align*}
   \Omega_0^{\Z_n,fr}(\ho P_n(f)) \cong \bigoplus_{k|n} \Z[\pi_1(M)_{\rho,k}]
   \end{align*}
where $\pi_1(M)_{\rho, k}$ is the set of equivalence classes on $\pi_1(M,*)$ generated by the following relations
\[\alpha \sim \beta \alpha \rho(\alpha)^{-1} \text{   and  }  \alpha \sim \rho(\alpha) \text{   for all  } \alpha,\beta \in \pi_1(M,*),\]
the map $\rho: \pi_1(M,*) \rightarrow \pi_1(M,*)$ is defined by $\alpha \mapsto w f(\alpha)w^{-1}$ for a chosen path $w$ from $*$ to $f(*)$.
\end{theorem}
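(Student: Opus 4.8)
The plan is to combine the tom Dieck splitting for the $\mathbb{Z}_n$-equivariant framed bordism group (Remark \ref{tomDieck}) with the identification $\Loop_{\Phi_n(f)}(M^n)\cong \ho P_n(f)$, and then carry out the bookkeeping of fixed-point data, Weyl groups and twisted conjugacy relations for each subgroup of $\mathbb{Z}_n$. First I would write down the subgroups of $\mathbb{Z}_n$: these are exactly $\mathbb{Z}_{n/k}$ for divisors $k\mid n$ (or, dually, one cyclic subgroup of index $k$ for each $k\mid n$), so the sum $\bigoplus_{(H)}$ in the tom Dieck splitting \eqref{tomDsplitting} becomes a sum indexed by $k\mid n$. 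Since $\mathbb{Z}_n$ is abelian, every subgroup is normal and there is no conjugacy collapsing; for the subgroup $H$ of index $k$ one has $N_{\mathbb{Z}_n}(H)=\mathbb{Z}_n$, so the relevant Weyl group is $W H = \mathbb{Z}_n/H \cong \mathbb{Z}_k$.

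The key geometric input is the computation of the $H$-fixed points of the Fuller map. As already recorded in the proof of Corollary \ref{cor.thmH}, for $H$ the subgroup of index $k$ we have $(M^n)^{H}\cong M^{k}$ — more precisely an $n$-tuple $(x_1,\dots,x_n)$ fixed by the generator of $H$ is determined by its first $k$ coordinates and then repeats with period $k$ — and under this identification the restricted map $\Phi_n(f)|_{(M^n)^H}$ becomes (up to a cyclic relabeling of the $k$ coordinates) the Fuller map $\Phi_k(f)$ of $f$ at level $k$, whose fixed points correspond to $k$-periodic points of $f$, i.e.\ to fixed points of $f^k$. Plugging this into the tom Dieck summand and using Corollary \ref{ell(f)} together with the identification \eqref{ident2} of $\Omega_0^{\fr}(\Loop_g Y)$ with $\mathbb{Z}[\pi_1(Y,*)_g]$ applied to $Y=M$ and $g=f^k$ (after reducing along the cyclic symmetry of the $k$-tuple, which is the $\mathbb{Z}_k$-quotient coming from $W H$), one gets that the $k$-th summand is $\mathbb{Z}[\pi_1(M,*)_{f^k}]$ modulo the extra $\mathbb{Z}_k$-action induced by cyclic permutation; unwinding the twisted conjugacy relation for $f^k$ together with this residual symmetry yields precisely the two relations $\alpha\sim\beta\alpha\rho(\alpha)^{-1}$ and $\alpha\sim\rho(\alpha)$ defining $\pi_1(M)_{\rho,k}$, where $\rho$ is induced by $f$ (the second relation being the new ingredient coming from the Weyl group quotient). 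Assembling over all $k\mid n$ gives the claimed isomorphism $\Omega_0^{\Z_n,fr}(\ho P_n(f))\cong \bigoplus_{k\mid n}\mathbb{Z}[\pi_1(M)_{\rho,k}]$.

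The main obstacle I anticipate is the last identification — showing that the twisted-conjugacy set $\pi_1(M^{k},*)_{\Phi_k(f)}$, quotiented by the cyclic $\mathbb{Z}_k$-symmetry, is naturally in bijection with $\pi_1(M,*)_{\rho,k}$. This requires a careful diagonal/restriction argument: one must track how a loop in the $k$-fold product decomposes, how the path $w$ from $*$ to $f(*)$ interacts with the cyclic shift, and verify that the orbit relation $\alpha\sim\rho(\alpha)$ is exactly what the generator of $W H\cong\mathbb{Z}_k$ imposes. The rest — enumerating subgroups, computing Weyl groups, and invoking Remark \ref{tomDieck}, Lemma \ref{borel} and Corollary \ref{ell(f)} — is essentially formal once this dictionary between periodic-point data at level $k$ and the fixed-point theory of $f^k$ is set up cleanly.
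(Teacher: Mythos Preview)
Your approach is correct and essentially the same as the paper's: Theorem \ref{thmJ} itself is only quoted from \cite{KW2}, but the paper's Lemma \ref{lemmaJ} carries out precisely the summand-by-summand tom Dieck computation you outline, identifying $(M^n)^{\Z_k}\cong M^l$ and unwinding the twisted conjugacy of $\Phi_l(f)$ on $\pi_1(M)^l$ together with the Weyl-group quotient $W\Z_k\cong\Z_l$. The only cosmetic difference is that the paper works directly with $l$-tuples in $\pi_1(M,*)^l$ under the coupled relation $(\beta_1,\dots,\beta_l)\sim(\alpha_1^{-1}\beta_1\rho(\alpha_l),\dots,\alpha_l^{-1}\beta_l\rho(\alpha_{l-1}))$ rather than first collapsing to $\pi_1(M,*)_{f^l}$ as you propose.
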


\begin{conjecture}{\cite[11.3]{KW2}}\label{conj}
Let $\mathcal{N}_k(f)$ be the number of non-zero terms in $\ell_n(f)$
expressed as a linear combination of the basis elements of $\Z[\pi_1(M)_{\rho,k}]$. Then, $\mathcal{N}_k(f)=N(f^k)$.
\end{conjecture}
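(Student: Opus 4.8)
The plan is to reduce Conjecture \ref{conj} to the fixed-point results already established, by recognizing that the $\Z_n$-fixed point problem for the Fuller map $\Phi_n(f)$ is governed exactly by the decomposition in Theorem \ref{ident3}, applied with $G = \Z_n$, $M$ replaced by $M^n$, and $f$ replaced by $\Phi_n(f)$. First I would identify the isotropy data: the subgroups of $\Z_n$ are the $\Z_k$ for $k \mid n$, and $(M^n)^{\Z_k} \cong M^{n/k}$ via the diagonal-type embedding used in Corollary \ref{cor.thmH}. Under this identification the restricted map $\Phi_n(f)^{\Z_k}$ corresponds to (a map conjugate to) the iterate $f^{n/k}$ on $M^{n/k}$ — this is the standard fact that the Fuller map reads off iterates on fixed sets, and it is exactly the input behind Jezierski's Theorem \ref{Jezierski}. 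Reindexing $k \leftrightarrow n/k$ then matches the summands of Theorem \ref{ident3} with the groups $\Z[\pi_1(M)_{\rho,k}]$ appearing in Theorem \ref{thmJ}, and one checks that the Weyl group quotient $W\Z_k = \Z_n/\Z_k$ acting on $\pi_1(M^{n/k})_{f^{n/k}}$ produces precisely the extra relation $\alpha \sim \rho(\alpha)$ in the definition of $\pi_1(M)_{\rho,k}$.

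With the bookkeeping in place, the core of the argument is: by Theorem \ref{ident3}, the projection of $\ell_n(f) = \ell_{\Z_n}(\Phi_n(f))$ onto the $\Z_k$-summand is the reduced Reidemeister trace $\overline{R(\Phi_n(f)^{\Z_k})} = \overline{R(f^{n/k})}$, living in $\Z[\pi_1(M^{n/k},*)_{f^{n/k}}]/W\Z_k \cong \Z[\pi_1(M)_{\rho, n/k}]$ (after reindexing, $\Z[\pi_1(M)_{\rho,k}]$). Now apply Theorem \ref{quotientR(f)} with $G = W\Z_k$: the number of nonzero terms of $\overline{R(f^{n/k})}$ equals the number of nonzero terms of $R(f^{n/k})$, because the reduced trace is $\sum_{\overline{[x]}} |G\cdot[x]|\, i(f^{n/k},[x])\, \overline{[x]}$, so a Weyl-orbit of fixed point classes contributes a nonzero coefficient to $\overline{R}$ iff the common index $i(f^{n/k},[x])$ is nonzero iff each class in the orbit is essential. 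Hence $\mathcal{N}_{n/k}(f) = N(f^{n/k})$, and reindexing gives $\mathcal{N}_k(f) = N(f^k)$ for all $k \mid n$.

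The main obstacle I anticipate is the precise identification of $\Phi_n(f)^{\Z_k}$ with the iterate $f^{n/k}$ \emph{together with} the matching of all the twisting data: not just the underlying map but the induced homomorphism on $\pi_1$, the path-twisting used to define $\Loop_{\Phi_n(f)}(M^n)^{\Z_k} \cong \ho P_{n/k}(f^{n/k})$-type spaces, and the residual $\Z_n/\Z_k$-action. One must verify that the cyclic-permutation action on an $n$-tuple restricted to the $\Z_k$-fixed locus becomes the $\Z_{n/k}$-action that, on path components, implements the relation $\alpha \sim \rho(\alpha)$; this is a direct but somewhat delicate unwinding of the definitions of $\ho P_n(f)$ and of $\pi_1(M)_{\rho,k}$ in Theorem \ref{thmJ}. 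A secondary point to be careful about is that Theorem \ref{quotientR(f)} as stated uses the index invariance Proposition \ref{fix.point.index}, which requires the action to be by a finite group acting on the relevant fixed set — here $W\Z_k$ is finite and acts on $M^{n/k}$, so this goes through, but one should state it cleanly. Once these identifications are nailed down, Conjecture \ref{conj} follows formally, and I would record it as Corollary \ref{proof.conj}.
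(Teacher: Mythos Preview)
Your proposal contains a genuine error that leads you to ``prove'' a statement that is in fact false. The paper does \emph{not} prove Conjecture~\ref{conj} as stated; it refutes it. Example~\ref{ex.periodic} exhibits a self-map $f$ with $N(f^2)=4$ but $\mathcal{N}_2(f)=2$. What the paper establishes instead is Corollary~\ref{proof.conj}: $\mathcal{N}_l(f)=0$ if and only if $N(f^l)=0$.

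The flaw in your argument is the sentence ``the number of nonzero terms of $\overline{R(f^{n/k})}$ equals the number of nonzero terms of $R(f^{n/k})$.'' Your own formula $\overline{R}=\sum_{\overline{[x]}}|G\cdot[x]|\,i(f,[x])\,\overline{[x]}$ shows exactly why this fails: a Weyl orbit of size $m>1$ consisting of essential fixed point classes contributes $m$ nonzero terms to $R$ but only \emph{one} nonzero term to $\overline{R}$. The equivalence you correctly extract from that formula is ``$\overline{R}$ has a nonzero coefficient at $\overline{[x]}$ iff $R$ has a nonzero coefficient at $[x]$,'' which gives the vanishing statement $\overline{R}=0\Leftrightarrow R=0$ (this is Theorem~\ref{quotientR(f)}), not an equality of the \emph{count} of nonzero terms. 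In the paper's example the Weyl group $\mathbb{Z}_2$ genuinely permutes the fixed point classes $[x]\leftrightarrow[f(x)]$, collapsing two essential classes to one.

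There is also a secondary imprecision: $\Phi_n(f)^{\mathbb{Z}_k}$ is \emph{not} conjugate to $f^{n/k}$; under $(M^n)^{\mathbb{Z}_k}\cong M^l$ (with $l=n/k$) it becomes the Fuller map $\Phi_l(f)$ on $M^l$, which lives on a different space than $f^l\colon M\to M$. The paper bridges this with Lemma~\ref{correspond.of.fix.pt.cl} and Proposition~\ref{correspond.index}, obtaining $R(\Phi_n^k(f))\leftrightarrow R(f^l)$ at the level of fixed point classes and indices. Your proposal glosses over this step; it is not hard, but it is not automatic either, and without it the identification $\overline{R(\Phi_n(f)^{\mathbb{Z}_k})}=\overline{R(f^{n/k})}$ is unjustified.
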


Our goal is to establish the relationship between $\mathcal{N}_l(f)$ and $N(f^l)$ by providing an explicit description of $\overline{R(\Phi_n(f)^{\Z_k})}$. Although $\mathcal{N}_l(f)$ and $N(f^l)$ do not always coincide (see Example \ref{ex.periodic}), we conclude that $\mathcal{N}_l(f) = 0$ if and only if $N(f^l) = 0$ in Corollary \ref{proof.conj}.

Recall Theorem \ref{ident3}, where we obtained a decomposition of the Klein-Williams invariant $\ell_G(f)$ for the equivariant fixed-point problem under the tom Dieck splitting. From this result, we deduce that

\[
\ell_n(f) = \ell_{\Z_n}(\Phi_n(f)) = \bigoplus_{k|n} \overline{R(\Phi_n(f)^{\Z_k})} \in \bigoplus_{k|n} \Z[\pi_1((M^n)^{\Z_k},*)_{\Phi_n(f)}]/W\Z_k.
\]

Observe that each component $\Z[\pi_1((M^n)^{\Z_k},*)_{\Phi_n(f)}]/W\Z_k$ contributes in the same way as in Theorem \ref{thmJ}.

\begin{lemma}\label{lemmaJ} Let $k|n$, and $kl=n$. Then, there is an isomorphism of abelian groups.
    \[\Z[\pi_1((M^n)^{\Z_k},*)_{\Phi_n(f)}]/W\Z_k \cong \Z[\pi_1(M)_{\rho,l}]\]
\end{lemma}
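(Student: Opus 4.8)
The plan is to unwind both sides to an explicit description in terms of $\pi_1(M,*)$ and check the two quotient relations match. First I would identify $(M^n)^{\Z_k}$ with $M^l$ via the fixed-point condition $x_1=\dots=x_{k+1}$ (using $n=kl$), where a fixed tuple under $\Z_k$ is determined by the block $(x_1,\dots,x_l)$, repeated $k$ times. Under this identification one must compute the restriction of the Fuller map $\Phi_n(f)$ to $(M^n)^{\Z_k}$; a short calculation shows that $\Phi_n(f)^{\Z_k}\colon M^l\to M^l$ is conjugate (by the same diagonal-block homeomorphism) to the Fuller map $\Phi_l(f)$ of $f$ on $M^l$. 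Thus $\Z[\pi_1((M^n)^{\Z_k},*)_{\Phi_n(f)}] \cong \Z[\pi_1(M^l,*)_{\Phi_l(f)}]$, and it remains to pass from this to $\Z[\pi_1(M)_{\rho,l}]$ after taking the quotient by the Weyl group $W\Z_k = N_{\Z_n}(\Z_k)/\Z_k \cong \Z_l$.

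The key computation is then to unpack $\pi_1(M^l,*)_{\Phi_l(f)}$ and the residual $\Z_l$-action. Here $\pi_1(M^l,*)\cong \pi_1(M,*)^l$, and the twist $\phi$ induced by $\Phi_l(f)$ sends $(\alpha_1,\dots,\alpha_l)$ to $(\rho(\alpha_l),\rho(\alpha_1),\dots,\rho(\alpha_{l-1}))$ where $\rho(\alpha)=wf_\#(\alpha)w^{-1}$ is the twist from Theorem \ref{thmJ}. The twisted conjugacy relation $\beta\sim\alpha\beta\phi(\alpha)^{-1}$ on the product group, I claim, collapses each orbit to the "total" element $\alpha_1\rho(\alpha_2)\rho^2(\alpha_3)\cdots$ in a single copy of $\pi_1(M,*)$ modulo the relation $\gamma\sim\delta\gamma\rho^l(\delta)^{-1}$ — this is the standard fact that twisted conjugacy classes of a cyclic Fuller-type twist on a product reduce to twisted conjugacy classes of the $l$-fold composite twist $\rho^l$; since $\rho^l$ corresponds to the twist of $f^l$, we first recover something close to $\pi_1(M,*)_{f^l}$. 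Then the leftover $\Z_l$-action (cyclic permutation of the blocks, which on the collapsed element acts by $\gamma\mapsto\rho(\gamma)$ up to twisted conjugacy) imposes exactly the extra relation $\alpha\sim\rho(\alpha)$, producing precisely $\pi_1(M)_{\rho,l}$ as defined in Theorem \ref{thmJ}.

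Concretely I would carry this out in three steps: (1) write down the block-diagonal homeomorphism $j\colon M^l \xrightarrow{\cong} (M^n)^{\Z_k}$ and verify $j^{-1}\circ \Phi_n(f)^{\Z_k}\circ j = \Phi_l(f)$ by direct substitution into the definition of the Fuller map; (2) identify $\pi_1$ and the induced twist, and prove the collapsing lemma for the cyclic twist on $\pi_1(M,*)^l$ — the cleanest way is to exhibit the inverse maps explicitly: $(\alpha_1,\dots,\alpha_l)\mapsto \alpha_1\rho(\alpha_2)\cdots\rho^{l-1}(\alpha_l)$ in one direction and $\gamma\mapsto(\gamma,1,\dots,1)$ in the other, and check both composites are the identity on the quotient sets; (3) compute the residual Weyl-group action and confirm it induces $\alpha\sim\rho(\alpha)$, so the final quotient is $\Z[\pi_1(M)_{\rho,l}]$. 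Throughout, one should also track base points and the auxiliary path $w$ to see the construction is independent of these choices, exactly as in the non-equivariant Reidemeister trace.

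I expect step (2), the collapsing lemma, to be the main obstacle: getting the algebra of twisted conjugacy for a permutation-type twist on a product group exactly right — in particular making sure the bijection is well-defined on equivalence classes in both directions and that composing with the residual $\Z_l$-action yields precisely the relation $\alpha\sim\rho(\alpha)$ and not something stronger or weaker — requires careful bookkeeping of which conjugating elements are used. The geometric reformulation (path components of $\Loop_{\Phi_n(f)}(M^n)^{\Z_k}$ versus $\ho P_l(f)$) gives a useful consistency check, since $(\Loop_{\Phi_l(f)}M^l)/\Z_l$ should match the relevant piece of $\ho P_n(f)$, and comparing with the count in Theorem \ref{thmJ} confirms no classes are lost or doubled.
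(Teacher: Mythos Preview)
Your proposal is correct and follows essentially the same route as the paper: identify $(M^n)^{\Z_k}\cong M^l$, check that the restricted Fuller map becomes $\Phi_l(f)$, write out the twisted conjugacy relation on $\pi_1(M,*)^l$, and then reduce to $\pi_1(M)_{\rho,l}$. The only difference is one of explicitness: where the paper simply writes down the relation
\[
(\beta_1,\ldots,\beta_l)\sim(\alpha_1^{-1}\beta_1 w f(\alpha_l)w^{-1},\ldots,\alpha_l^{-1}\beta_l w f(\alpha_{l-1})w^{-1})
\]
and then defers to ``the same arguments in the Proof of Theorem~\ref{thmJ}'' (i.e.\ to \cite{KW2}), you actually sketch that deferred argument---your collapsing map $(\alpha_1,\dots,\alpha_l)\mapsto \alpha_1\rho(\alpha_2)\cdots\rho^{l-1}(\alpha_l)$ with inverse $\gamma\mapsto(\gamma,1,\dots,1)$, followed by the observation that the residual $\Z_l$-action imposes $\alpha\sim\rho(\alpha)$. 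This is exactly the content hidden behind the paper's citation, so your steps (2) and (3) are a genuine expansion rather than a different method. (One small slip: your phrase ``the fixed-point condition $x_1=\dots=x_{k+1}$'' is garbled; you mean the $\Z_k$-fixed tuples are $k$-fold repetitions of a length-$l$ block.)
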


Note that 
\[
(M^n)^{\Z_k}=\{(x_1,\ldots,x_l,x_1,\ldots,x_l,\ldots,x_1,\ldots, x_l)\in M^n \mid l=\frac{n}{k}\}
\]

For simplicity, denote $\Phi_n(f)^{\Z_k}$ by $\Phi^k_n(f)$, which is defined as
\begin{align*}
    \Phi^k_n(f)\colon (M^n)^{\Z_k} & \to (M^n)^{\Z_k}\\
    (x_1,\ldots,x_l,\ldots,x_1,\ldots, x_l) & \mapsto (f(x_l), f(x_1),\ldots,f(x_{l-1}), f(x_1)\ldots,f(x_{l-1})).
\end{align*}

\begin{proof}[Proof of Lemma \ref{lemmaJ}]
It is clear that $(M^n)^{\Z_k} \cong M^l$. Therefore, we identify $(M^n)^{\Z_k}$ with $M^l$ for simplicity. Under this identification, the map $\Phi_n^k(f)$ coincides with the map $\Phi_l(f) \colon M^l \to M^l$. Recall that the set $\pi_1(M^l,*)_{\Phi_n(f)}$ is given by the twisted conjugacy relations, defined as follows.
\[\
\alpha^{-1} \beta w \Phi_l(f)(\alpha)w^{-1}\sim \beta \text{  for all  } \alpha,\beta \in \pi_1(M^l,*),
\]
where $w$ is the chosen path from the based point $*$ to $\Phi_l(f)(*)$, which defines an isomorphism $\pi_1(M^l,*)\cong\pi_1(M^l,\Phi_l(f)(*))$. One can choose the based point $*$ as $(*,\ldots,*)$ without loss of generality, and let $w=(w,\ldots,w)$, where each $w$ is a path on $M$ from $*$ to $f(*)$. Then, the set of equivalence classes on $\pi_1(M^l,*)$ becomes
\[
\pi_1(M,*) \times \pi_1(M,*)\times \cdots\times \pi_1(M,*)
\]
with respect to the equivalence relation
\[
(\beta_1,\ldots,\beta_l)\sim(\alpha_1^{-1} \beta_1 w f(\alpha_l)w^{-1},\alpha_2^{-1} \beta_2 w f(\alpha_1)w^{-1},\ldots,\alpha_l^{-1} \beta_l w f(\alpha_{l-1})w^{-1})
\]
for all $\alpha_i,\beta_i\in \pi_1(M,*)$. This set of equivalence classes gives rise to $\Z[\pi_1(M)_{\rho,l}]$ due to the same arguments in the Proof of Theorem \ref{thmJ}.
\end{proof}

To establish the relationship between $\overline{R(\Phi_n^k(f))}$ and the Nielsen numbers $N(f^k)$, we begin by examining the fixed point set of $\Phi_n^k(f)$. It is straightforward to verify that 
\[
\Fix (\Phi^k_n(f))=\{(x,f(x),\ldots,f^{l-1}(x),x,\ldots,f^{l-1}(x))\in M^n \mid f^l(x)=x\}
\]

The following lemma will be useful in comparing the Nielsen numbers $N(\Phi_n^k(f))$ and $N(f^l)$.

\begin{lemma}\label{correspond.of.fix.pt.cl}
    There exists a one-to-one correspondence between the set of fixed point classes of $\Phi_n^k(f)$ and the set of fixed point classes of $f^l$, where $l=\frac{n}{k}$.
    \begin{align*}
        \operatorname{Fix}^c(\Phi_n^k(f))& \to \operatorname{Fix}^c(f^l)\\
        [(x,f(x),\ldots,f^{l-1}(x))] & \mapsto [x]
    \end{align*}
\end{lemma}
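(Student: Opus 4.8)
The plan is to show that the map defined in the statement is well-defined, injective, and surjective, using the explicit description of $\Fix(\Phi_n^k(f))$ already recorded above together with the standard characterization of fixed point classes via paths $\alpha$ with $\alpha \simeq (\text{map})\circ\alpha$ rel endpoints. First I would note that the map on the level of fixed points,
\[
\Fix(\Phi_n^k(f)) \to \Fix(f^l), \quad (x, f(x), \ldots, f^{l-1}(x)) \mapsto x,
\]
is a bijection: this is immediate from the displayed description of $\Fix(\Phi_n^k(f))$, since such a tuple is completely determined by its first coordinate $x$, and $x$ ranges exactly over $\Fix(f^l)$. So the content is entirely about checking that this bijection descends to, and is a bijection on, fixed point classes.

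Next I would handle well-definedness and injectivity together. Suppose $x, y \in \Fix(f^l)$ and let $\tilde x = (x, f(x), \ldots, f^{l-1}(x))$, $\tilde y = (y, f(y), \ldots, f^{l-1}(y))$ be the corresponding fixed points of $\Phi_n^k(f)$ (under the identification $(M^n)^{\Z_k}\cong M^l$). If $[\tilde x] = [\tilde y]$, there is a path $\tilde\alpha = (\alpha_1, \ldots, \alpha_l)$ in $M^l$ from $\tilde x$ to $\tilde y$ with $\tilde\alpha \simeq \Phi_l(f)\circ\tilde\alpha$ rel endpoints; looking at the first coordinate gives a path $\alpha_1$ from $x$ to $y$, and since $\Phi_l(f)$ cyclically shifts and applies $f$, the homotopy condition in coordinate $1$ reads $\alpha_1 \simeq f\circ\alpha_l$ rel endpoints, while coordinate $i+1$ gives $\alpha_{i+1}\simeq f\circ\alpha_i$. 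Concatenating these, $\alpha_1 \simeq f^l\circ\alpha_1$ rel endpoints, so $[x] = [y]$ in $\Fix^c(f^l)$. Conversely, given a path $\alpha$ from $x$ to $y$ with $\alpha \simeq f^l\circ\alpha$ rel endpoints, I would build a path in $M^l$ by setting $\alpha_1 = \alpha$ and $\alpha_{i+1} = f\circ\alpha_i = f^i\circ\alpha$; then $(\alpha_1,\ldots,\alpha_l)$ is a path from $\tilde x$ to $\tilde y$, and one checks the homotopy condition $\tilde\alpha \simeq \Phi_l(f)\circ\tilde\alpha$ rel endpoints holds in each coordinate — coordinates $2,\ldots,l$ hold by construction (equality, in fact), and coordinate $1$ holds precisely because $\alpha_1 = \alpha \simeq f^l\circ\alpha = f\circ\alpha_l$. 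Hence $[\tilde x] = [\tilde y]$, giving injectivity, and surjectivity of the original bijection on fixed points already gives surjectivity on classes.

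The main obstacle I expect is purely bookkeeping: keeping the cyclic indexing straight when unwinding the homotopy condition for $\Phi_l(f)$ coordinate by coordinate, and being careful that the relevant homotopies are all rel endpoints so that they can be concatenated legitimately. Once the coordinate-wise translation between the condition ``$\alpha\simeq f^l\circ\alpha$ rel endpoints'' on $M$ and ``$\tilde\alpha \simeq \Phi_l(f)\circ\tilde\alpha$ rel endpoints'' on $M^l$ is written out, the result follows, and no genuine topological input beyond the definition of fixed point classes is needed.
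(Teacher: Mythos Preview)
Your proposal is correct and follows essentially the same route as the paper's proof: both establish the bijection on fixed points first, then verify well-definedness by unwinding the coordinate-wise homotopy condition $\alpha_{i+1}\simeq f\circ\alpha_i$ and $\alpha_1\simeq f\circ\alpha_l$ to obtain $\alpha_1\simeq f^l\circ\alpha_1$, and prove injectivity by building the path $(\alpha, f\circ\alpha,\ldots,f^{l-1}\circ\alpha)$ in $M^l$ from a given $\alpha\simeq f^l\circ\alpha$. Your write-up is in fact slightly more careful than the paper's about endpoints and indexing.
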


\begin{proof}
Assume that 
$$[(x,f(x),\ldots,f^{l-1}(x))]=[(y,f(y),\ldots,f^{l-1}(y))].$$
Then, there exits a path
$$\vec{\alpha}=(\alpha_1,\ldots,\alpha_l)$$
in $(M^n)^{\Z_k}$ from $(x,f(x),\ldots,f^{l-1}(x))$ to $(y,f(y),\ldots,f^{l-1}(y))$ such that
\[
\Phi_n^k(f)(\alpha)=(f(\alpha_l),f(\alpha_1),\ldots,f(\alpha_{l-1}))\simeq(\alpha_1,\alpha_2,\ldots,\alpha_l).
\]
Therefore, $\alpha_2\simeq f(\alpha_1)$ implies that $\alpha_3\simeq f(\alpha_2)\simeq f^2(\alpha_1)$. Inductively, we have $\alpha_l\simeq f(\alpha_{l-1})\simeq f^{l-1}(\alpha_1)$, and so $\alpha_1 \simeq f^l(\alpha_1)$, where $\alpha_1$ is a path from $x$ to $y$. As a result, $[x]=[y]$; thus, this map is well-defined.

Now, assume that $[x]=[y]$, then there exits a path $\alpha\colon\text{I}\to M$ from $x$ to $f(x)$ such that $f^l(\alpha)=\alpha$. Then, define a path as follows.
$$\vec{\alpha}:=(\alpha,f(\alpha),\ldots,f^{l-1}(\alpha))\colon \text{I}\to(M^n)^{\Z_k}.$$
This path is from $(x,f(x),\ldots,f^{l-1}(x))$ to $(y,f(y),\ldots,f^{l-1}(y))$. Also, we have
$$\Phi_n^k(f)(\alpha)=(f^l(\alpha),f(\alpha),\ldots,f^{l-1}(\alpha))\simeq(\alpha,f(\alpha),\ldots,f^{l-1}(\alpha))$$
since $f^l(\alpha)\simeq\alpha$. Therefore,
$$[(x,f(x),\ldots,f^{l-1}(x))]=[(y,f(y),\ldots,f^{l-1}(y))].$$ 
Hence, the map is injective. Lastly, this map is surjective because if $[x]$ is a fixed point class of $f^l$, then $[(x,f(x),\ldots,f^{l-1}(x))]$ is a fixed point class of $\Phi_n^k(f)$.
\end{proof}

From Lemma \ref{correspond.of.fix.pt.cl}, one can conclude that geometric Reidemeister trace of $\Phi_n^k(f)$ can be given as follows.
\begin{align*}
R(\Phi_n^k(f)) = & \sum_{[x]\in \text{Fix}^c(f^l)} i(\Phi_n^k(f), [(x,f(x),\ldots,f^{l-1}(x))])[(x,f(x),\ldots,f^{l-1}(x))]\\
&\in \Z[\pi_1((M^n)^{\Z_k},*)_{\Phi_n^k(f)}]
\end{align*}

The next proposition shows the relationship between the fixed point index of $\Phi_n^k(f)$ at the point $(x,f(x),\ldots,f^{l-1}(x))$ and the fixed point index of $f^l$ at $x$.

\begin{proposition}\label{correspond.index} Let $f^l$ have only generic fixed points; that is, $\det (\mathrm{I}-D_xf^l)\neq 0$ for all $x\in \operatorname{Fix}(f^l)$. Then,
\[i(\Phi_n^k(f), [(x,f(x),\ldots,f^{l-1}(x))])=i(f^l,[x])\]
\end{proposition}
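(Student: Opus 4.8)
The plan is to reduce the statement to a single determinant identity. Since $f^l$ has only generic fixed points, each $x\in\Fix(f^l)$ is a nondegenerate fixed point, so $\Fix(f^l)$ is finite and $i(f^l,[x])=\sum_{x'\in[x]}\sign\det(\mathrm{I}-D_{x'}f^l)$ by additivity of the fixed point index over the isolated points of a class. Read at the level of points rather than classes, the bijection of Lemma \ref{correspond.of.fix.pt.cl} is the assignment $x\mapsto p_x:=(x,f(x),\ldots,f^{l-1}(x))$, and under it $\Fix(\Phi_n^k(f))$ consists exactly of the points $p_x$ with $x\in\Fix(f^l)$, while the class $[p_x]$ of $\Phi_n^k(f)$ corresponds point-for-point to the class $[x]$ of $f^l$. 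Hence it suffices to prove, for each such $x$, the equality $\det(\mathrm{I}-D_{p_x}\Phi_n^k(f))=\det(\mathrm{I}-D_xf^l)$; this also shows $p_x$ is nondegenerate, and summing the resulting sign identity over the class gives the proposition.

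For the differential, identify $(M^n)^{\Z_k}\cong M^l$ so that $\Phi_n^k(f)$ becomes $\Phi_l(f)\colon(y_1,\ldots,y_l)\mapsto(f(y_l),f(y_1),\ldots,f(y_{l-1}))$, and put $A_i:=D_{f^{i-1}(x)}f$, a linear map $T_{f^{i-1}(x)}M\to T_{f^i(x)}M$ with indices read cyclically modulo $l$ (so $A_l\colon T_{f^{l-1}(x)}M\to T_xM$, using $f^l(x)=x$). Then $D_{p_x}\Phi_l(f)$ is the block matrix whose only nonzero blocks are $A_l$ in position $(1,l)$ and $A_i$ in position $(i+1,i)$ for $1\le i\le l-1$, so $\mathrm{I}-D_{p_x}\Phi_l(f)$ has identity blocks on the diagonal, the blocks $-A_i$ just below the diagonal, and the single block $-A_l$ in the top-right corner.

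Now I would clear the subdiagonal by block row operations: replace the second block row by its sum with $A_1$ times the first block row, then replace the third block row by its sum with $A_2$ times the (new) second block row, and continue in this way down to the $l$-th block row. Each such operation is left multiplication by a unipotent block-elementary matrix, hence preserves the determinant and introduces no sign. The result is a block upper triangular matrix whose diagonal blocks are $\mathrm{I},\ldots,\mathrm{I}$ and, in the last slot, $\mathrm{I}-A_{l-1}A_{l-2}\cdots A_1A_l$; therefore $\det(\mathrm{I}-D_{p_x}\Phi_l(f))=\det(\mathrm{I}-A_{l-1}\cdots A_1A_l)$. Applying the identity $\det(\mathrm{I}-BC)=\det(\mathrm{I}-CB)$ with $B=A_{l-1}\cdots A_1\colon T_xM\to T_{f^{l-1}(x)}M$ and $C=A_l\colon T_{f^{l-1}(x)}M\to T_xM$, and noting that $CB=A_lA_{l-1}\cdots A_1=D_xf^l$ by the chain rule, yields $\det(\mathrm{I}-D_{p_x}\Phi_l(f))=\det(\mathrm{I}-D_xf^l)$, which completes the argument.

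The main obstacle is nothing conceptual but the careful bookkeeping of the block row reduction: one must track the exact order in which the factors $A_i$ accumulate in the top-right block, confirm that only ``add a multiple of one block row to another'' operations are used (never a swap, so that no spurious sign appears), and keep in mind that the $A_i$ are maps between different tangent spaces — a harmless point once one works in local coordinates around the orbit $\{x,f(x),\ldots,f^{l-1}(x)\}$, but one worth stating explicitly.
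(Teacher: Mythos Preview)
Your proof is correct and follows essentially the same strategy as the paper: reduce to the pointwise identity $\det(\mathrm{I}-D_{p_x}\Phi_l(f))=\det(\mathrm{I}-D_xf^l)$ via a block-determinant computation. The only difference is cosmetic linear algebra---the paper peels off one block row/column at a time using the Schur complement formula $\det\begin{psmallmatrix}\mathrm{I}&V\\W&D\end{psmallmatrix}=\det(D-WV)$, whereas you clear the subdiagonal by unipotent block row operations; both arrive at $\det(\mathrm{I}-A_{l-1}\cdots A_1A_l)$. Your explicit invocation of $\det(\mathrm{I}-BC)=\det(\mathrm{I}-CB)$ to pass from this to $\det(\mathrm{I}-D_xf^l)$ is in fact cleaner than the paper, which asserts the two cyclic orderings of the product give ``exactly the same expression'' without naming the identity that justifies it.
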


\begin{proof}
Since $(M^n)^{\mathbb{Z}_k} \cong M^l$, and $\Phi_n^k(f)$ corresponds to $\Phi_l(f)$ on $M^l$, it suffices to show that  
\[
i(\Phi_l(f), (x,f(x),\ldots,f^{l-1}(x))) = i(f^l,x).
\]  

Since $f^l$ has generic fixed points, the fixed point index can be calculated as follows (see \cite{jiangbook}, Chapter \RN{1}, 3.2 for the details).
\[
i(f^l,x) = \operatorname{sign} \det(\text{I} - D_x f^l) = (-1)^t,
\]  
where $t$ is the number of real eigenvalues of $D_x f^l$ greater than 1. Furthermore, we can conclude that $\Phi_l(f)$ also has generic fixed points, just as $f^l$ does. Indeed, we aim to show that  
\[
\det (\text{I} - D_x f^l) = \det(\text{I} - D_{\vec{x}} \Phi_l(f)).
\]  

To this end, we first consider the Jacobian matrix of $\Phi_l(f)$ at the point $\vec{x} = (x, f(x), \ldots, f^{l-1}(x))$, which is given by the following matrix. 

\[
\begin{pmatrix}
    0 & 0 & \cdots & 0 & D_{f^{l-1}(x)}f\\
    D_xf & 0 & \cdots & & 0\\
    0 & D_{f(x)}f & 0 & \cdots &0\\
    \vdots & \ddots & \ddots & \ddots & \vdots\\
    0 & \cdots &0 & D_{f^{l-2}(x)}f & 0
\end{pmatrix}_{ml \times ml}
\]
Note that each entry is an $m \times m$ block matrix, where $m = \dim M$. We will compute $\det (\text{I} - D_{\vec{x}} \Phi_l(f))$ using a property of the determinant of block matrices. Let  
\[
\det (\text{I} - D_{\vec{x}} \Phi_l(f)) =
\begin{vmatrix}
\text{I} & V \\
W & D
\end{vmatrix},
\]  
where $\text{I}$ is the $m \times m$ identity matrix, and the blocks are given by  

\[
V =
\begin{bmatrix}
0 & \cdots & 0 & -D_{f^{l-1}(x)} f
\end{bmatrix},
\]  

\[
W =
\begin{bmatrix}
-D_x f \\
0 \\
\vdots \\
0  
\end{bmatrix},
\]  

\[
D =
\begin{bmatrix}
\text{I} & 0 & \cdots & 0\\
-D_{f(x)} f & \text{I} & \cdots & 0\\
\vdots & \ddots & \ddots & \vdots\\
0 & 0 & -D_{f^{l-2}(x)} f & \text{I}
\end{bmatrix}.
\] 
Then, using the determinant formula for block matrices, we obtain  
\[
\det (\text{I} - D_{\vec{x}} \Phi_l(f)) = \det(\text{I}) \det(D - W \text{I}^{-1} V),
\]  
which simplifies to  

\[
\begin{vmatrix}
    \text{I} & 0 & \cdots & -D_x f D_{f^{l-1}(x)} f\\
    -D_{f(x)} f & \text{I} & \cdots & 0\\
    \vdots & \ddots & \ddots & \vdots\\
    0 & 0 & -D_{f^{l-2}(x)} f & \text{I}
\end{vmatrix}.
\]  
Using the same method inductively, we obtain  

\[
\det (\text{I} - D_{\vec{x}} \Phi_l(f)) =
\begin{vmatrix}
\text{I} & D_{f^{l-3}(x)} f \cdots D_x f D_{f^{l-1}(x)} f \\
- D_{f^{l-2}(x)} f & \text{I}
\end{vmatrix}.
\]  
Taking the determinant, we obtain that

\[
\det(\text{I} - D_{f^{l-2}(x)} f D_{f^{l-3}(x)} f \cdots D_x f D_{f^{l-1}(x)} f).
\]  
Now, observe that  

\[
\det (\text{I} - D_x f^l) = \det (\text{I} - D_x f D_{f(x)} f \cdots D_{f^{l-1}(x)} f),
\]  
which is exactly the same expression as above. This shows that  

\[
i(\Phi_l(f), (x, f(x), \ldots, f^{l-1}(x))) = i(f^l, x).
\]  
\end{proof}

A direct conclusion of this Proposition \ref{correspond.index} and Lemma \ref{correspond.of.fix.pt.cl} gives the result below.

\begin{corollary}\label{Reidemeister.traces.same}
The Nielsen numbers of $\Phi_n^k(f)$ and $f^l$ are the same when they have generic fixed points. In particular, let
\begin{align*}
R(f^l)= \sum_{[x]\in \operatorname{Fix}^c(f^l)} i(f^l,[x])[x] \in   \Z[\pi_1(M,x)_{f^l}].
\end{align*}
Then,
\begin{align*}
R(\Phi_n^k(f))= \sum_{[x]\in \operatorname{Fix}^c(f^l)} i(f^l,[x])[(x,f(x),\ldots,f^{l-1}(x))] \in \Z[\pi_1((M^n)^{\Z_k},\vec{x})_{\Phi_n^k(f)}].
\end{align*}
\end{corollary}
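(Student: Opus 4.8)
The plan is to assemble Lemma~\ref{correspond.of.fix.pt.cl} and Proposition~\ref{correspond.index} into a statement about Reidemeister traces and then read off the Nielsen numbers. First I would recall that for any self-map $g$ of a compact manifold the injection $\operatorname{Fix}^c(g) \to \pi_1(\cdot,*)_g$ sends distinct fixed point classes to distinct basis elements, so no cancellation can occur in $R(g)$; consequently the Nielsen number $N(g)$ equals the number of \emph{essential} fixed point classes, i.e.\ those $[x]$ with $i(g,[x]) \neq 0$. Thus it suffices to match up the essential classes of $\Phi_n^k(f)$ with those of $f^l$.

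Next I would note that, by the determinant computation carried out in the proof of Proposition~\ref{correspond.index}, assuming $f^l$ has only generic fixed points already forces $\Phi_n^k(f) \cong \Phi_l(f)$ to have only generic fixed points, so the genericity hypothesis need only be imposed on $f^l$. Under this hypothesis each fixed point of $f^l$ is isolated with index $\pm 1$, and from the explicit description of $\operatorname{Fix}(\Phi_n^k(f))$ recorded above together with the proof of Lemma~\ref{correspond.of.fix.pt.cl}, the fixed point class $[(x,f(x),\ldots,f^{l-1}(x))]$ of $\Phi_n^k(f)$ consists exactly of the points $(y,f(y),\ldots,f^{l-1}(y))$ with $y$ ranging over the fixed point class $[x]$ of $f^l$. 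Applying Proposition~\ref{correspond.index} pointwise and using additivity of the fixed point index over the finitely many points of a class gives
\begin{align*}
i\bigl(\Phi_n^k(f),[(x,f(x),\ldots,f^{l-1}(x))]\bigr)
&= \sum_{y\in[x]} i\bigl(\Phi_l(f),(y,f(y),\ldots,f^{l-1}(y))\bigr) \\
&= \sum_{y\in[x]} i(f^l,y) \;=\; i(f^l,[x]).
\end{align*}

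Substituting this identity into $R(\Phi_n^k(f)) = \sum_{[c]} i(\Phi_n^k(f),[c])\,[c]$ and reindexing the sum via the bijection of Lemma~\ref{correspond.of.fix.pt.cl} yields precisely the displayed formula for $R(\Phi_n^k(f))$ in terms of $R(f^l)$. Since that bijection carries essential classes to essential classes (by the index equality just proved) and preserves their number, and since Nielsen numbers count essential classes, I conclude $N(\Phi_n^k(f)) = N(f^l)$.

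The argument is routine once the two preceding results are available; the only point deserving care is transporting \emph{essentiality} correctly along the correspondence, which is immediate from the injectivity of $\operatorname{Fix}^c(\cdot) \to \pi_1(\cdot,*)_{(\cdot)}$ so that no two basis elements coincide, and flagging that genericity is needed only for $f^l$ since Proposition~\ref{correspond.index} propagates it to $\Phi_n^k(f)$ automatically.
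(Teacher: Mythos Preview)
Your proposal is correct and follows precisely the route the paper intends: the paper states the corollary as ``a direct conclusion of Proposition~\ref{correspond.index} and Lemma~\ref{correspond.of.fix.pt.cl}'' without further elaboration, and you have simply spelled out what that direct conclusion consists of. Your care in passing from the pointwise index equality in the proof of Proposition~\ref{correspond.index} to the class-wise equality via additivity, and in noting that genericity for $f^l$ suffices, are details the paper leaves implicit.
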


From the above result, we have concluded an identification between Reidemeister traces $R(\Phi_n^k(f))$ and $R(f^l)$. On the other hand, we need to take into account the action of the Weyl group $W\Z_k$ since the projection of $\ell_n(f)$ on each component is $\overline{R(\Phi_n^k(f))}$ under the tom Dieck splitting. 

\begin{remark}
If $x$ and $f^i(x)$ are in the same fixed point class of $f^l$ for all $n$-periodic points $x$ and for all $1\leq i <l$, then the Reidemeister trace $R(\Phi_n^k(f))$ under the quotient map 
\[
\Z [\pi_1((M^n)^{\Z_k},*)_{\Phi_n^k(f)}] \to \Z [\pi_1((M^n)^{\Z_k},*)_{\Phi_n^k(f)}/W\Z_k]
\] 
does not change. In other words, $\overline{R(\Phi_n^k(f))}=R(\Phi_n^k(f))$.
\end{remark}
    
On the other hand, this is not true when $x$ and $f^i(x)$ are not in the same fixed-point classes for some $i \pmod l$. Example \ref{ex.periodic} shows that $\overline{R(\Phi_n^k(f))} \neq R(\Phi_n^k(f))$, and thus, $\mathcal{N}_l(f) \neq N(f^l)$, contrary to the suggestion in Conjecture \ref{conj}. Nevertheless, it is still true that the two invariants contain the same information, as indicated by the following Corollary, which implies that $\ell_n(f)$ vanishes if and only if the Nielsen numbers $N(f^k)$ vanish for all $k \mid n$.

\begin{corollary}\label{proof.conj}
$\mathcal{N}_l(f)$ vanishes if and only if $N(f^l)$ does.
\end{corollary}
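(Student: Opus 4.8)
The plan is to deduce Corollary~\ref{proof.conj} from the structural results already assembled, reducing the statement about $\mathcal{N}_l(f)$ to a statement about $\overline{R(\Phi_n^k(f))}$ and then to one about $R(f^l)$, all via the explicit formula for the reduced Reidemeister trace coming from Theorem~\ref{quotientR(f)}. First I would recall that, by Theorem~\ref{ident3} applied to $G = \Z_n$ and the Fuller map $\Phi_n(f)$, the $k$-summand ($kl = n$) of $\ell_n(f) = \ell_{\Z_n}(\Phi_n(f))$ under the tom Dieck splitting is exactly $\overline{R(\Phi_n^k(f))}$, living in $\Z[\pi_1((M^n)^{\Z_k},*)_{\Phi_n^k(f)}]/W\Z_k$, which by Lemma~\ref{lemmaJ} is identified with $\Z[\pi_1(M)_{\rho,l}]$. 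By definition, $\mathcal{N}_l(f)$ is the number of nonzero basis coefficients of this summand, i.e. the number of nonzero terms of $\overline{R(\Phi_n^k(f))}$.

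Next I would invoke Theorem~\ref{quotientR(f)}, whose explicit formula says that for any finite group action the reduced Reidemeister trace has coefficients $|G\cdot[x]|\cdot i(f,[x])$ indexed by $G$-orbits $\overline{[x]}$ of fixed point classes; in particular $\overline{R(\Phi_n^k(f))} = 0$ if and only if $R(\Phi_n^k(f)) = 0$, and more sharply the number of nonzero terms of $\overline{R(\Phi_n^k(f))}$ is the number of $W\Z_k$-orbits $\overline{[z]}$ of fixed point classes $[z]$ of $\Phi_n^k(f)$ with $i(\Phi_n^k(f),[z]) \neq 0$, using that $|W\Z_k\cdot [z]|$ is a nonzero integer and Proposition~\ref{fix.point.index} guarantees all members of an orbit share the same index. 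Thus $\mathcal{N}_l(f) = 0$ iff $i(\Phi_n^k(f),[z]) = 0$ for every fixed point class $[z]$ of $\Phi_n^k(f)$, iff $R(\Phi_n^k(f)) = 0$, iff $N(\Phi_n^k(f)) = 0$.

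The final link is Lemma~\ref{correspond.of.fix.pt.cl} together with Corollary~\ref{Reidemeister.traces.same} (equivalently Proposition~\ref{correspond.index}): the bijection $\operatorname{Fix}^c(\Phi_n^k(f)) \cong \operatorname{Fix}^c(f^l)$ matches the index $i(\Phi_n^k(f),[(x,f(x),\dots,f^{l-1}(x))])$ with $i(f^l,[x])$, so $R(\Phi_n^k(f)) = 0$ iff $R(f^l) = 0$, i.e. $N(\Phi_n^k(f)) = N(f^l)$. Chaining these equivalences yields $\mathcal{N}_l(f) = 0 \iff N(f^l) = 0$. I would write the argument as a short sequence of "if and only if" steps citing the numbered results in order: Theorem~\ref{ident3} and Lemma~\ref{lemmaJ} for the identification of the summand, Theorem~\ref{quotientR(f)} (and Proposition~\ref{fix.point.index}) for passing between $\overline{R}$ and $R$, and Corollary~\ref{Reidemeister.traces.same} for passing between $R(\Phi_n^k(f))$ and $R(f^l)$.

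The one genuine subtlety — and the place I would be most careful — is the index identification used in Corollary~\ref{Reidemeister.traces.same}: Proposition~\ref{correspond.index} is stated under the genericity hypothesis that $f^l$ has only nondegenerate fixed points, so to conclude for an arbitrary $G$-map $f$ I would first homotope $f$ (equivariantly, or just as a plain self-map since $N(f^l)$ and $\mathcal{N}_l(f)$ are homotopy invariants of $f$ itself) to a map whose $l$-th iterate is generic, and then observe that both $N(f^l)$ and $\mathcal{N}_l(f)$ are unchanged under this homotopy. Once that reduction is in place the rest is a formal concatenation of the cited bijections and the explicit reduced-trace formula, so no further computation is needed.
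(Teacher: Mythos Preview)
Your proposal is correct and follows essentially the same chain of equivalences as the paper's proof: $\mathcal{N}_l(f)=0 \iff \overline{R(\Phi_n^k(f))}=0 \iff R(\Phi_n^k(f))=0 \iff N(f^l)=0$, invoking Theorem~\ref{quotientR(f)} for the middle step and Proposition~\ref{correspond.index} for the last. If anything you are more careful than the paper, which cites Proposition~\ref{correspond.index} without explicitly addressing the genericity hypothesis you flag; your reduction by homotopy invariance is the right way to fill that gap.
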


\begin{proof}
$\mathcal{N}_l(f)$ vanishes if and only if $\overline{R(\Phi_n^k(f))}$ vanishes. Using Theorem \ref{quotientR(f)}, one can deduce that $\overline{R(\Phi_n^k(f))}$ vanishes if and only $R(\Phi_n^k(f))$, and $R(\Phi_n^k(f))$ vanishes if and only if $N(f^l)$ vanishes by Proposition \ref{correspond.index}.
\end{proof}

Corollary \ref{proof.conj} was previously established in \cite{MalkiewichPonto} through the use of duality and trace in the context of bicategories. Their approach also enables direct generalizations, such as an analogous result for the fiberwise Reidemeister trace. In contrast, our proof here employs more classical techniques. Furthermore, we construct an explicit example demonstrating that $\mathcal{N}_l(f) \neq N(f^l)$.

\begin{example}\label{ex.periodic}
Consider the manifold $Y \subseteq S^2=\{(x,y,z)\mid x^2+y^2+z^2=1\}\subseteq \R^3$, which is a sphere with four open balls removed, as shown in Figure \ref{sphere-4balls} below.

\begin{figure}[h]
    \centering
    \includegraphics[width=0.5\linewidth]{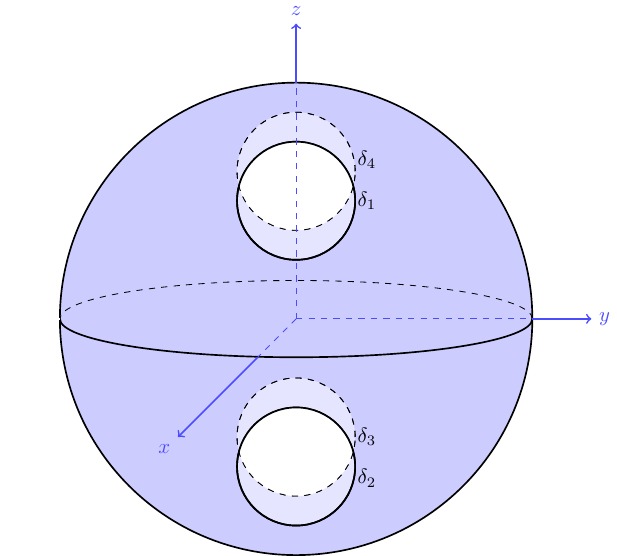}
    \caption{The manifold $Y$}\label{sphere-4balls}
\end{figure}

First, we define a $\Z_4 \times \Z_2$-action on $S^2$ as follows. The generator $g$ of $\Z_4$ rotates each point by $\frac{\pi}{2}$-radians about the $y$-axis, and the generator $h$ of $\Z_2$ reflects each point across the $xz$-plane.
This group acts on the boundary components $\delta_1, \delta_2, \delta_3, \delta_4$ such that $h(\delta_i) = \delta_i$ and $g(\delta_i) = \delta_{i+1}$  for all integers $i$ modulo $4$.
Since $k \cdot y \in Y$ for all $k \in \Z_4 \times \Z_2$ and $y \in Y$, this induces a $\Z_4 \times \Z_2$-action on $Y$.

Consider the map $g\circ h$ on $Y$, which has two $2$-periodic points. Since we aim to work with compact manifolds, we take two copies of $Y$ and glue them along their boundaries. Denote this compact manifold as $X$. Clearly, $g\circ h \cup g\circ h$ defines an endomorphism on $Y \cup_{\partial Y} Y = X$. We call this function $f$.
\[
f:= g\circ h \cup g\circ h \colon Y\cup_{\partial Y} Y \to Y\cup_{\partial Y} Y 
\]

Now, we will first compute $\ell_2(f)$ and then determine the Nielsen numbers $N(f)$ and $N(f^2)$ to compare them with the coefficients of $\ell_2(f)$.

Since $R(f^2)=R(\Phi_2(f))$, by Corollary \ref{Reidemeister.traces.same}, it is enough to compute $R(f^2)$. From the additivity property (also known as the pushout formula) of the Reidemeister traces (see \cite{Ferrario1999} for more details), we have
\[
R(f^2)=R(f^2_{|Y})+R(f^2_{|Y})-R(f^2_{|\partial Y}).
\]

Note that there is no $2$-periodic points on $\partial Y$; therefore, we only consider $R(f^2_{|Y})$. There are $2$-generic fixed points of $f^2_{|Y}$, denote one of them as $x$, then the other one is $f(x)$. It is easy to check that $x$ and $f(x)$ are not the in same fixed point class because there is no such a path $\alpha$ satisfies $\alpha \simeq f^2(\alpha)$ on $Y$.

Now, to calculate $i(f^2,x)$, take an closed ball $V\subseteq Y$, that contains the point $x$. Then, the induced map of $f^2$ on $\partial V \cong S^2 \subseteq \R^3$ is given by antipodal map on $S^2$; and therefore,
\[
i(f^2,x)=\frac{\vec{y}-f^2(\vec{y})}{||\vec{y}-f^2(\vec{y})||}=\frac{\vec{y}-(-\vec{y})}{||\vec{y}-(-\vec{y})||}=1.
\]
One can show that $i(f^2, x) = i(f^2, f(x))$. In fact, a more general statement holds:  
\[
i(f^l, x) = i(f^l, f^i(x)) \quad \text{for all } 1 \leq i \leq l.
\]
To show that $i(f^l, x) = i(f^l, f(x))$, we compare the Jacobian matrices $D_x f^l$ and $D_{f(x)} f^l$. By the chain rule, we have  
\[
D_x f^l = D_{f^{l-1}(x)} f \cdot D_{f^{l-2}(x)} f \cdots D_{f(x)} f \cdot D_x f.
\]
Similarly,  
\[
D_{f(x)} f^l = D_x f \cdot D_{f^{l-1}(x)} f \cdots D_{f^2(x)} f \cdot D_{f(x)} f.
\]
Since these matrices are identical, it follows that  
\[
i(f^l, x) = i(f^l, f(x)).
\]
Thus, using the fact that $i(f^2, x) = i(f^2, f(x))$, we proceed to compute the Reidemeister trace.
\[
R(f^2)=R(f^2_{|Y})+R(f^2_{|Y})=[x]+[f(x)]+[x']+[f(x')]
\]
This implies that
\[
R(\Phi_2(f))=[(x,f(x))]+[(f(x),x)]+[(x',f(x'))]+[(f(x'),x')]
\]
Since $\Z_2=\langle k\rangle$ acts on $X\times X$ as $k\cdot (x,f(x))=(f(x),x)$, we obtain that
\[
\overline{R(\Phi_2(f))}=\overline{[(x,f(x))]}+\overline{[(x',f(x'))]}.
\]
Now, we will verify $R(\Phi_2(f)^{\Z_2})$. Note that $\Phi_2(f)^{\Z_2} \colon \Delta_X \to \Delta_X$ is given by $$(x,x)\mapsto(f(x),f(x)).$$
Since there is no fixed point of $f$, it is clear that $R(\Phi_2(f)^{\Z_2} )=0$. Thus,
\[
\ell_2(f)=\overline{[(x,f(x))]}+\overline{[(x',f(x'))]}
\]
Therefore, we obtained that $N(f)=\mathcal{N}_1(f)=0$. On the other hand, $N(f^2)=4$ while $\mathcal{N}_2(f)=2$.

\end{example}

\bibliographystyle{plain}

\begin{bibdiv}
\begin{biblist}

\bib{brown.converse.fixpt}{article}{
      author={Brown, R.~F.},
       title={{On a homotopy converse to the Lefschetz fixed point theorem.}},
        date={1966},
     journal={Pacific Journal of Mathematics},
      volume={17},
      number={3},
       pages={407 \ndash  411},
}

\bib{brownfix}{book}{
      author={Brown, R.F.},
       title={The {L}efschetz fixed point theorem},
   publisher={Scott, Foresman},
        date={1971},
        ISBN={9780673053954},
         url={https://books.google.com/books?id=_Q6oAAAAIAAJ},
}

\bib{DOLD19651}{article}{
      author={Dold, A.},
       title={Fixed point index and fixed point theorem for euclidean neighborhood retracts},
        date={1965},
        ISSN={0040-9383},
     journal={Topology},
      volume={4},
      number={1},
       pages={1\ndash 8},
         url={https://www.sciencedirect.com/science/article/pii/0040938365900443},
}

\bib{FW88}{article}{
      author={Fadell, E.},
      author={Wong, P.},
       title={{On deforming $G$-maps to be fixed point free.}},
        date={1988},
     journal={Pacific Journal of Mathematics},
      volume={132},
      number={2},
       pages={277 \ndash  281},
}

\bib{Ferrario99}{article}{
      author={Ferrario, D.~L.},
       title={{A fixed point index for equivariant maps}},
        date={1999},
     journal={Topological Methods in Nonlinear Analysis},
      volume={13},
      number={2},
       pages={313 \ndash  340},
}

\bib{Ferrario1999}{article}{
      author={Ferrario, D.~L.},
       title={Generalized {L}efschetz numbers of pushout maps defined on non-connected spaces},
        date={1999},
     journal={Banach Center Publications},
      volume={49},
       pages={117\ndash 135},
         url={https://api.semanticscholar.org/CorpusID:118206053},
}

\bib{geoghegan-handgeotop}{article}{
      author={Geoghegan, R.},
       title={Nielsen fixed point theory},
        date={2003},
     journal={a chapther of "Handbook of Geometric Topology"},
       pages={499\ndash 521},
}

\bib{HatcherQuinn}{article}{
      author={Hatcher, A.},
      author={Quinn, F.},
       title={Bordism invariants of intersections of submanifolds},
        date={1974},
        ISSN={00029947},
     journal={Transactions of the American Mathematical Society},
      volume={200},
       pages={327\ndash 344},
         url={http://www.jstor.org/stable/1997261},
}

\bib{hovey2007model}{book}{
      author={Hovey, M.},
       title={Model categories},
      series={Mathematical surveys and monographs},
   publisher={American Mathematical Society},
        date={2007},
        ISBN={9780821843611},
         url={https://books.google.com/books?id=cyn0BwAAQBAJ},
}

\bib{HU}{article}{
      author={Hu, P.},
       title={Duality for smooth families in equivariant stable homotopy theory},
        date={2018},
     journal={Ast{\'e}risque},
         url={https://api.semanticscholar.org/CorpusID:117846274},
}

\bib{husseini82}{article}{
      author={Husseini, S.~Y.},
       title={Generalized lefschetz numbers},
        date={1982},
        ISSN={00029947},
     journal={Transactions of the American Mathematical Society},
      volume={272},
      number={1},
       pages={247\ndash 274},
         url={http://www.jstor.org/stable/1998959},
}

\bib{jezierski}{article}{
      author={Jezierski, J.},
       title={Wecken’s theorem for periodic points in dimension at least 3},
        date={2006},
        ISSN={0166-8641},
     journal={Topology and its Applications},
      volume={153},
      number={11},
       pages={1825\ndash 1837},
         url={https://www.sciencedirect.com/science/article/pii/S0166864105001604},
}

\bib{jiangbook}{book}{
      author={Jiang, B.},
       title={Lectures on {N}ielsen fixed point theory},
      series={Contemporary mathematics - American Mathematical Society},
   publisher={American Mathematical Society},
        date={1983},
        ISBN={9780821850145},
         url={https://books.google.com/books?id=vQmoAAAAIAAJ},
}

\bib{jiang2dim}{article}{
      author={Jiang, B.},
       title={Fixed points and braids. {II}},
        date={1985},
     journal={Mathematische Annalen},
      volume={272},
       pages={249\ndash 256},
         url={https://api.semanticscholar.org/CorpusID:121838290},
}

\bib{KW1}{article}{
      author={Klein, J.~R.},
      author={Williams, B.},
       title={Homotopical intersection theory {I}},
        date={2007},
        ISSN={1465-3060},
     journal={Geom. Topol.},
      volume={11},
       pages={939\ndash 977},
      review={\MR{2326939}},
}

\bib{KW2}{article}{
      author={Klein, J.~R.},
      author={Williams, B.},
       title={Homotopical intersection theory, {II}: {E}quivariance},
        date={2010},
        ISSN={0025-5874},
     journal={Math. Z.},
      volume={264},
      number={4},
       pages={849\ndash 880},
      review={\MR{2593297}},
}

\bib{equivframed.kosniowski}{article}{
      author={Kosniowski, C.},
       title={Equivariant stable homotopy and framed bordism},
        date={1976},
        ISSN={00029947},
     journal={Transactions of the American Mathematical Society},
      volume={219},
       pages={225\ndash 234},
         url={http://www.jstor.org/stable/1997591},
}

\bib{lewis2006equivariant}{book}{
      author={Lewis, L.G.J.},
      author={McClure, J.E.},
      author={May, J.P.},
      author={Steinberger, M.},
       title={Equivariant stable homotopy theory},
      series={Lecture Notes in Mathematics},
   publisher={Springer Berlin Heidelberg},
        date={2006},
        ISBN={9783540470779},
         url={https://books.google.com/books?id=XK56CwAAQBAJ},
}

\bib{MalkiewichPonto}{article}{
      author={Malkiewich, Cary},
      author={Ponto, Kate},
       title={Periodic points and topological restriction homology},
        date={2020},
     journal={International Mathematics Research Notices},
      volume={2022},
      number={4},
       pages={2401\ndash 2459},
}

\bib{may2004parametrized}{misc}{
      author={May, J.~P.},
      author={Sigurdsson, J.},
       title={Parametrized homotopy theory},
        date={2004},
}

\bib{Nielsen1921}{article}{
      author={Nielsen, J.},
       title={Über die minimalzahl der fixpunkte bei den abbildungstypen der ringflächen},
        date={1921},
     journal={Mathematische Annalen},
      volume={82},
       pages={83\ndash 93},
         url={http://eudml.org/doc/158836},
}

\bib{PontoBook}{book}{
      author={Ponto, K.},
       title={Fixed point theory and trace for bicategories},
    language={en},
      series={Ast\'erisque},
   publisher={Soci\'et\'e math\'ematique de France},
        date={2010},
      number={333},
         url={http://www.numdam.org/item/AST_2010__333__R1_0/},
      review={\MR{2741967}},
}

\bib{Dieck1987}{book}{
      author={tom Dieck, T.},
       title={Transformation groups},
   publisher={De Gruyter},
     address={Berlin, New York},
        date={1987},
        ISBN={9783110858372},
         url={https://doi.org/10.1515/9783110858372},
}

\bib{Reidemeister1936AutomorphismenVH}{article}{
      author={van Reidemeister, K.},
       title={Automorphismen von homotopiekettenringen},
        date={1936},
     journal={Mathematische Annalen},
      volume={112},
       pages={586\ndash 593},
         url={https://api.semanticscholar.org/CorpusID:122191678},
}

\bib{weber07}{article}{
      author={Weber, J.},
       title={Equivariant nielsen invariants for discrete groups},
        date={2007},
        ISSN={0030-8730},
     journal={Pacific J. Math.},
      volume={231},
      number={1},
       pages={239\ndash 256},
      review={\MR{2304630}},
}

\bib{Weber06}{article}{
      author={Weber, J.},
       title={The universal functorial equivariant lefschetz invariant},
        date={2006},
     journal={K-theory},
      volume={36},
       pages={169\ndash 207},
         url={https://api.semanticscholar.org/CorpusID:15856465},
}

\bib{Wecken1941}{article}{
      author={Wecken, F.},
       title={Fixpunktklassen {II}},
    language={ger},
        date={1941},
     journal={Mathematische Annalen},
      volume={118},
       pages={216\ndash 234},
         url={http://eudml.org/doc/160082},
}

\bib{Wecken1942}{article}{
      author={Wecken, F.},
       title={Fixpunktklassen teil {III} mindestzahlen von fixpunkten},
        date={1942},
     journal={Mathematische Annalen},
      volume={118},
       pages={544\ndash 577},
         url={http://eudml.org/doc/160105},
}

\bib{WongNielsenNumber}{article}{
      author={Wong, P.},
       title={{Equivariant Nielsen numbers.}},
        date={1993},
     journal={Pacific Journal of Mathematics},
      volume={159},
      number={1},
       pages={153 \ndash  175},
}

\end{biblist}
\end{bibdiv}

\end{document}